\newtheorem{theorem}{Theorem}[section]
\newtheorem{lemma}[theorem]{Lemma}
\newtheorem{proposition}[theorem]{Proposition}
\newtheorem{corollary}[theorem]{Corollary}
\newtheorem{remark}{Remark}
\DeclareRobustCommand\widecheck[1]{{\mathpalette\@widecheck{#1}}}
\def\@widecheck#1#2{%
	\setbox\z@\hbox{\m@th$#1#2$}%
	\setbox\tw@\hbox{\m@th$#1%
		\widehat{%
			\vrule\@width\z@\@height\ht\z@
			\vrule\@height\z@\@width\wd\z@}$}%
	\dp\tw@-\ht\z@
	\@tempdima\ht\z@ \advance\@tempdima2\ht\tw@ \divide\@tempdima\thr@@
	\setbox\tw@\hbox{%
		\raise\@tempdima\hbox{\scalebox{1}[-1]{\lower\@tempdima\box
				\tw@}}}%
	{\ooalign{\box\tw@ \cr \box\z@}}}
\newcommand{\ep}{\epsilon}
\newcommand{\codt}{\cdot}
\newcommand{\1}{\mathbb{I}}
\def\XXint#1#2#3{{\setbox0=\hbox{$#1{#2#3}{\int}$ }
\vcenter{\hbox{$#2#3$ }}\kern-.6\wd0}}
\DeclarePairedDelimiter\floor{\lfloor}{\rfloor}
\DeclareMathOperator{\sech}{sech}
\DeclareMathOperator{\Res}{Res}
\DeclareMathOperator{\sinc}{sinc}
\newcommand{\be}{\begin{equation}}
	\newcommand{\ee}{\end{equation}}
\newcommand{\bes}{\begin{equation*}}
	\newcommand{\ees}{\end{equation*}}
\newcommand{\mand}{\quad \text{and}\quad}
\newcommand{\R}{{\bf{R}}}
\newcommand{\E}{{\mathbb{E}}}
\newcommand{\Z}{{\bf{Z}}}
\newcommand{\N}{{\bf{N}}}
\newcommand{\e}{{\bf{e}}}
\newcommand{\bj}{{\bf{j}}}
\newcommand{\X}{{\bf{X}}}
\newcommand{\bk}{{\bf{k}}}
\newcommand{\y}{{\bf{y}}}
\newcommand{\x}{{\bf{x}}}
\newcommand{\z}{{\bf{z}}}
\newcommand{\0}{{\bf{0}}}
\renewcommand{\tilde}{\widetilde}
\renewcommand{\hat}{\widehat}
\renewcommand{\check}{\widecheck}
\newcommand{\norm}[1]{ \left\|#1\right\|}
\numberwithin{equation}{section}
\title{Macroscopic Wave Propagation for 2D Lattice with Random Masses}
\author{Joshua A. McGinnis\footnote{Drexel University Department of Mathematics, The Korman Center, 15 S 33rd St, Philadelphia, PA 19104}\footnote{ jam887@drexel.edu}}
\date{\today}
\begin{document}
	\maketitle
	\begin{abstract}
    We consider a simple 2D harmonic lattice with random, independent and identically distributed masses. Using the methods of stochastic homogenization, we prove that solutions with long wave initial data converge in an appropriate sense to solutions of an effective wave equation. The convergence is strong and almost sure. In addition, the role of the lattice's dimension in the rate of convergence is discussed. The technique combines energy estimates with powerful classical results about sub-Gaussian random variables.
\end{abstract}

	\section{Introduction}
	\label{Introduction}
	We prove an almost sure convergence result for solutions of the following two-dimensional spatially discrete, harmonic lattice with random masses in the long wave limit i.e. as the wavelength becomes longer:
	
	\be \label{Main2} m(\bj)\ddot{u}(\bj,t)=\Delta u(\bj,t)\ee 
	where $\bj=(j_1,j_2) \in \Z^2$. The discrete Laplacian is defined as 
	\be \Delta u(\bj,t)\coloneqq -4u(\bj,t)+\sum_{i=1}^{2}u(\bj+\e_i,t)+u(\bj-\e_i,t),\ee
	
where $\e_1=(1,0)$ and $\e_2=(0,1).$
	
The $m(j_1,j_2)$ are independent and identically distributed random variables (i.i.d.)\@ contained almost surely in some interval $[a,b] \in \R^+$. There is a long history of studying such lattices, especially when the masses or constants of elasticity vary periodically \cite{Brillouin}, since they can provide simplified models for investigating physical phenomena arising in more complex systems. We are interested in understanding to what degree approximations to the wave equation are still valid, since random coefficients could in principle be used to model impurities. The system is well understood when the masses are either constant or periodic with respect to $(j_1,j_2)$ \cite{Mielke}, but for a 2D lattice most of what is known for the random problem is for weak randomness  \cite{Spohn} or  numerical \cite{Porter}. Much has already been said about the continuous versions of \eqref{Main2}. For an extensive resource, see \cite{Papanicolau}. Note that rates of convergence are rarely addressed. Furthermore almost sure convergence results in the discrete setting require different techniques. In the continuous setting convergence can be achieved more directly through the law of large numbers. In our setting, one must define what is meant by convergence, and this is why we prove convergence through ``coarse-graining", which is used in \cite{Mielke} to prove convergence in the periodic problem. To achieve a rigorous rate of convergence, one must have bounds on the stochastic error terms. In \cite{McGinnis}, the law of the iterated logarithm was used. Here we use the theory of sub-Gaussian random variables.

For initial conditions whose wavelength and amplitude is $O(\ep^{-1})$ with $\ep$ a small positive number, we prove that the $\ell^2$ norm of the differences between true solutions and appropriately scaled solutions to the wave equation is almost surely $O(\ep^{-1-\sigma})$, where $\sigma$ is any small positive number. While such an absolute error diverges as $\ep \to 0^+,$ it happens that this is enough to establish an almost sure convergence of the macroscopic dynamics within the coarse-graining setting. 
	
	The article \cite{McGinnis} studies a similar problem on a 1D lattice. There the constants of elasticity also vary randomly and the system is studied in the relative coordinates. The so called multiscale method of homogeniziation, a by-now classical tool with a long history in PDE for deriving effective equations, see \cite{Cioranescu}, is employed. Our results should be contrasted with those in 1D. In \cite{McGinnis}, it is shown that that the coarse-graining converges at a rate of $\sqrt{\ep \log \log (\ep^{-1})}$, and this is thought to be sharp based on numerical evidence. Below, we show that in 2D, the rate of convergence is no slower than $\ep^{1-\sigma}$ when the masses are i.i.d. However the rate of convergence is different for layered masses i.e.  where masses are only random with respect to $j_1$ and constant with respect to $j_2$.  We explore this in Section \ref{Other Masses} and in the numerical experiments in Section \ref{Numerical Results}. In such a case we show that the convergence rate is comparable to the rate in the 1D lattice. We do not believe our estimates are sharp like those in the 1D setting because the analysis requires the use of a cut-off function, which demands slightly greater regularity and algebraic decay of the initial conditions. Still, numerical evidence suggests they are probably close to being sharp and likely only off by a logarithm and a vanishing power of $\ep$ introduced with the cut-off.
	
	Although large parts of the formal derivation are the same in 2D as they are in 1D, difficulty arises in the fact that, as far as we can tell, in 2D no known explicit formulation of a solution for \eqref{Chi Easy} exists. It can however be solved on a finite domain. Therefore a cut-off function involving $\ep$ is introduced and although it eventually disappears from the final Theorems \ref{Main Estimate} and \ref{Main Coarse Graining}, several new arguments need to be made, most important of which are almost sure estimates for solutions of \eqref{What chi solves} involving tails of sequences of sub-Gaussian random variables. In Section \ref{Deriving the Effective Equations}, we derive the effective equations. The core probability theory is in Section \ref{Probalistic Estimates}. A common energy estimate, akin to those found in \cite{Chirilus-Bruckner}, \cite{Gaison}, and \cite{Schneider} is given in  Section \ref{Lattice Energy Argument}. These help bound the error in terms of the ``residuals" defined in \eqref{Res}. Elementary theory of the energy of the effective wave, such as that in \cite{Evans}, is given in Section \ref{the effecive wave}. This theory is used to derive estimates of various norms needed in the following section, Section \ref{Bounding the Residual}. Finally, Section \ref{Main Estimate Section} and Section \ref{Coarse Graining} contains the main estimate and convergence result. The appendix contains technical proofs of several statements that are believable enough to be skipped in a first read through.

\section{Deriving the Effective Equations}
\label{Deriving the Effective Equations}
\subsection{Expansions}\label{Expansions}
Define the ``residual" to be 
\be\label{Res} \Res \tilde{u}(\bj, t)=m(\bj)\ddot{\tilde{u}}-\Delta\tilde{u}(\bj,t).\ee

We hope to find an approximate solution to \eqref{Main2} whose residual is as small as possible. We look for approximate solutions of the form

\be \label{ansatz} \tilde{u}(\bj,t)=\ep^{-1}U_0(\bj ,\ep \bj ,\ep t)+\ep U_2(\bj,\ep \bj, \ep t). \ee
The relative displacement of the masses is given by 
\be 
r_i(\bj,t)\coloneqq u(\bj+\e_i,t)-u(\bj,t).
\ee
Further down, we find that $U_0$ depends only on $\ep j$ and not on $j$ alone. Thus we have chosen as a convention the amplitude of the ansatz to be $O(\ep^{-1})$ so that the relative displacement of the masses is $O(1)$. This is the same convention chosen in \cite{Mielke}. Here 
\be U_i:\Z^2\times \R^2 \times \R \to \R, \ee
and we keep track of its arguments by letting $\X=\ep\bj$ and $\tau=\ep t,$ in which case $\X=(X_1,X_2) \in \R^2.$  In order to plug \eqref{ansatz} into \eqref{Main2}, we need to understand how $\Delta$ acts on functions of this form. Note that
\be \label{HowDacts} \Delta U (\bj, \X)=\sum_{i=1}^{2} U(\bj +\e_i,\ep \bj +\ep \e_i) +U(\bj -\e_i, \ep \bj -\ep \e_i)-2U(\bj, \ep \bj).  \ee
The addends,
\be U(\bj+\e_i, \ep \bj+\ep \e_i)+U(\bj-\e_i,\ep \bj -\ep \e_i),  \ee can be Taylor expanded in powers of $\ep$. Using this expansion in \eqref{HowDacts},
we have
\be \label{expansion}
\Delta U= \check{\Delta}_0U+\ep\check{\Delta}_1U+\ep^2\check{\Delta}_2U +O(\ep^3),
\ee
where
\be \label{O1} \check{\Delta}_0U(\bj,\X) \coloneqq -2U(\bj, \X)+\sum_{i=1}^2 U(\bj +\e_i, \X)+ U(\bj-\e_i, \X ) ,\ee  
\be \label{Oep}\check{\Delta}_1U(\bj,\X)\coloneqq\sum_{i=1}^2\partial_{X_i} U(\bj +\e_i, \X)- \partial_{X_i}U(\bj -\e_i,\X),\ee and 
\be \label{Oep2} \check{\Delta}_2U(\bj,\X)\coloneqq \frac{1}{2}\sum_{i=1}^2 \partial_{X_iX_i}U(\bj+\e_i,\X)+\partial_{X_iX_i}U(\bj -\e_i, \X). \ee 
Everything remaining in the expansion is $O(\ep^3)$ so we neglect it for now. Now we calculate $\Res{\tilde{u}}$ using \eqref{expansion}. Again grouping terms by powers of $\epsilon$ yields
\be\begin{aligned} \label{FirstRes} \Res \tilde{u}(\bj,\X,\tau)=&-\ep^{-1}\check{\Delta}_0U_0(\bj, \X,\tau)\\ & -  \check{\Delta}_1U_0(\bj -\e_i,\X, \tau)
\\& +\ep m(\bj)\partial_{\tau\tau}U_0(\bj,\X,\tau)-\ep \sum_{i=0}^1\check{\Delta}_{2i}U_{2-2i}(\bj,\X,\tau) \\& +O(\ep^2).
\end{aligned} \ee 
 Since we want $\Res \tilde{u}$ to be small, we solve for $U_0$ and $U_2$ so that formally the residual is $O(\ep^2).$ 
Assuming we seek bounded solutions, the $O(\ep^{-1})$ term implies
\be  \label{Delta is 0} \check{\Delta}_0U_0(\bj, \X,\tau) =0 .\ee We can meet \eqref{Delta is 0} by putting
\be \label{U0constant} U_0(\bj, \X,\tau) =\bar{U}_0(\X,\tau)\ee i.e. $U_0$ does not depend on the microscopic variable.
From \eqref{U0constant}, it is seen that 
\be \check{\Delta}_1U_0 \equiv 0 \ee and that 
\be\label{Macroscopic Laplacian}
\check{\Delta}_2U_0 = \Delta_\X U_0 \coloneqq \sum_{i=1}^2\partial_{X_iX_i} U_0.\ee 
With $U_0$ constant in $\bj$, the residual in \eqref{FirstRes} simplifies to 
\be\begin{aligned} \label{SecondRes} \Res \tilde{u}(\bj,\X,\tau)= & \ep m(\bj)\partial_{\tau\tau}U_0(\X,\tau)-\ep \Delta_{\X} U_0(\X,\tau)-\ep \check{\Delta}_{0}U_{2}(\bj,\X,\tau)+O(\ep^2).
\end{aligned}\ee
To make the $O(\ep)$ terms vanish, we would like
\be \label{To be Averaged} m(\bj)\partial_{\tau\tau}U_0(\X,\tau)-\Delta_\X U_0(\X,\tau)=\check{\Delta}_{0}U_{2}(\bj,\X,\tau). \ee 
Let $z(\bj)\coloneqq m(\bj)-\bar{m}$, and $\bar{m}\coloneqq\E[m(\bj)]$ where $\E$ is the expected value with respect to the probability measure of the i.i.d. lattice of masses. One way to solve \eqref{To be Averaged}
is to write it as 
\be \label{left and right} 
\bar{m}\partial_{\tau\tau}U_0(\X,\tau)-\Delta_\X U_0(\X,\tau)=\check{\Delta}_{0}U_{2}(\bj,\X,\tau)-z(\bj)\partial_{\tau\tau}U_0(\X,\tau),
\ee
and then pick $U_0$ and $U_2$ to force the left hand side and right hand side to vanish independently. From the left hand side, we find $U_0$ solves an effective wave equation. In this case, the wave speed $\bar{m}$ is guessed to be the correct one. For a more probabilistic derivation, see Subsection \ref{Averaging} below. 

Solving the right hand side by picking $U_2$ would be easy if only we had $\chi:\Z^2 \to \R$ s.t.
\be \label{Chi Easy}
\Delta \chi(\bj)=z(\bj).
\ee 
With such a solution we have \be\label{U2} 
U_2(\bj,\X,\tau)=\chi(\bj)\partial_{\tau\tau}U_0(\bj,\tau).
\ee
From this, the approximate solution would be 
\be\tilde{u}(\bj,\X,\tau)=\ep^{-1}U_0(\bj,\tau)+\ep \chi(\bj)\partial_{\tau\tau}U_0(\X,\tau). \ee
In order to control the magnitude of the residual, and to prove that $\ep^{-1}U_0$ is the dominant term in the approximation, we need to have asymptotic bounds on $\chi$. However, we do not even know how to solve for such a $\chi$. To circumnavigate this issue, we resort to solving $\eqref{Chi Easy}$ where the support of $z$ is on a finite domain dependent upon $\ep$. We call this restricted solution $\chi_r$. Justifying that  \be \label{Approximate Solution} \tilde{u}(\bj,\X,\tau)=\ep^{-1}U_0(\bj,\tau)+\ep \chi_r(\bj)\partial_{\tau\tau}U_0(\X,\tau),\ee
where $U_0$ solves the wave equation, is a valid approximation by finding almost sure asymptotic bounds on $\chi_r$ in $\ep$ is the main novelty of this paper.

\subsection{Averaging}\label{Averaging}
Another way to get at \eqref{To be Averaged} is by averaging. We make another ad hoc assumption, this time on $U_2$. We require that
\be\label{assumption 2} \lim_{|\bj|_\infty \to \infty} \dfrac{U_2(\bj,\X,\tau)}{\lvert \bj\rvert_\infty} =0 \ee where 
\be \label{infinity norm}
\lvert\bj\rvert_\infty \coloneqq \max\{|j_1|,|j_2|\ | \ \bj=(j_1,j_2)\}.
\ee 
We have introduced the $\infty$ norm here for technical reasons. Henceforth $\0$ is the vector $(0,0).$ We define disks 
\be\label{Disk} D(\bj_0,r)\coloneqq \{\bj \in \Z^2 \ | \ |\bj-\bj_0|_\infty \leq r\}. \ee The boundary of these disks are given by 
\be\label{Boundary}\delta D(\bj_0,r)=\{\bj \ | \ |\bj -\bj_0|_\infty =r \}.\ee
For any set $U \subset \Z^2$, $\lvert U \rvert$ means the number of elements in the set. With the assumption \eqref{assumption 2}, one may argue that for $U_2$ satisfying \eqref{To be Averaged} to exist for all $\X$ and $\tau$, it must be that its spatial average is $0$ i.e.
\be \label{Averaged RHS}\lim_{r \to \infty}\frac{1}{|D(\0,r)|}\sum_{\bj \in D(\0,r)}m(\bj)\partial_{\tau\tau}U_0(\X,\tau)-\Delta_{\X} U_0(\X,\tau)=0,\ee but by the law of large numbers
\be\label{Averaged LHS} \begin{aligned} &\lim_{r \to \infty}\frac{1}{|D(\0,r)|}\sum_{\bj \in D(\0,r)}m(\bj)\partial_{\tau\tau}U_0(\X,\tau)-\Delta_{\X} U_0(\X,\tau)\\=&\bar{m}\partial_{\tau\tau}U_0(\X,\tau)-\Delta_\X U_0(\X,\tau).\end{aligned} \ee

Combining \eqref{Averaged RHS} with \eqref{Averaged LHS} yields the same effective wave equation as seen in the left hand side of \ref{left and right}, 
\be\label{The efffective equation} \bar{m}\partial_{\tau\tau}U_0(\X,\tau)-\Delta_\X U_0(\X,\tau)=0.\ee 
 We now subtract \eqref{The efffective equation} from \eqref{To be Averaged} yielding
\be\label{Fluctuations}z(\bj)\partial_{\tau\tau}U_0(\X,\tau)=\check{\Delta}_0U_2(\bj,\X,\tau) \ee which is the right hand side of \eqref{left and right}.
\section{Probabilistic Estimates}
\label{Probalistic Estimates}
The intuition for the next step is that we need only solve \eqref{Chi Easy} on a domain which is accessible to the wave i.e. since we only care about $|t| \leq T\ep^{-1}$, the approximation works without having a solution to \eqref{Chi Easy} outside a radius proportional to $T\ep^{-1}$. Given enough decay of the initial conditions, only a negligible amount of the wave will have traveled outside this radius. Therefore we can use a spatial cutoff and well known results of sub-Gaussian random variables to obtain bounds on $\chi$. The form of the estimates in this section are motivated by the error estimates in the following sections, specifically Section \ref{Bounding the Residual}. 
\subsection{Poisson Problem with Random Source}\label{Poisson Problem with Random Source}
Let $I:\Z^2\to \{0,1\}$ s.t.
\be \label{K delta}
I(\bj) =\begin{cases} 1 & \bj =\textbf{0} \\ 0 & \bj \neq \textbf{0}
\end{cases}
\ee
and 
$\varphi:\Z^2 \to \R$ s.t. 
\be \label{Fundamental Solution}
\Delta \varphi(\bj)= I(\bj).
\ee
Then $\varphi$ is the fundamental solution and it is known that $\varphi(\0)=0$, and for $\bj \neq 0$
\be 
\label{Fundamental Solution Explicit}
\varphi(\bj)=\frac{1}{2\pi}\log|\bj|+C_0+O(|\bj|^{-2}), \ |\bj| \to \infty
\ee
where $|\bj|\coloneqq \sqrt{j_1^2+j_2^2}.$ The proof of the existence and uniqueness of $\varphi$ can be found in \cite{Duffin}. In \eqref{Fundamental Solution Explicit}, $\log$ may be replaced with $\log^+$. It is defined by $\log^+(x)=\max\{0,\log(x)\}$ and $\log^+(0)=0$, so all logs in the sequel should be thought of as $\log^+$, even if we neglect the plus sign.

Define 
\be\label{Chi restricted}
\chi_r(\bj) \coloneqq \sum_{\bk \in D(\0,r)}\varphi(\bj-\bk)z(\bk) \ee
where the $z(\bk)$ and $D(\0,r)$ are defined in Subsection \ref{Expansions}. The following is true
\be 
\label{What chi solves}
\Delta\chi_r(\bj) = \begin{cases} z(\bj) & \bj \in D(\0,r) \\ 
0 & \bj \in \Z^2/D(\0,r)
\end{cases}.
\ee
For any generic linear operator $\mathcal{L}$ acting on functions of $\Z^2$, we have 
\be\label{Reconsider} \mathcal{L}(\chi_r)(\bj)=\sum_{\bk \in D(\0,r)} \mathcal{L}(\varphi)(\bj-\bk)z(\bk) .\ee
\subsection{Estimates on the Solution}\label{Estimates on the Solution}
We make use of Hoeffding's Inequality, the justification of which can be found in \cite{Massart}. We state the theorem here for completeness. 
\begin{theorem}(Hoeffding's Inequality, 1962)
Let $q_1,q_2,...,q_n$ be independent random variables such that $q_i \in [a_i,b_i]$ almost surely for all $i \leq n$. Let 
\be 
S \coloneqq \sum_{i=1}^{n}q_i-\E[q_i].
\ee
Then for any positive $t$, 
\be 
P\left(S \geq t \right) \leq \exp \left(\frac{-2t^2}{\sum_{i=1}^{n}(a_i-b_i)^2}\right).
\ee
\end{theorem}
 Recall that $z(\bk) \in [a-\bar{m},b-\bar{m}]$ almost surely and is mean $0$. Therefore, $z(\bk)\mathcal{L}(\varphi)(\bj-\bk) \in[\mathcal{L}(\varphi)(\bj-\bk)(a-\bar{m}),\mathcal{L}(\varphi)(\bj-\bk)(b-\bar{m})] $ and is mean $0$. (It might be that $\mathcal{L}(\varphi)(\bj-\bk)$ is negative in which case the bounds of the interval are flipped.)
Denote
\be \label{Restricted Norm}
\norm{\mathcal{L}\varphi}^2_{D(\bj,r)}\coloneqq \sum_{\bk \in D(\bj,r)}(\mathcal{L}(\varphi)(\bk))^2=\sum_{\bk \in D(0,r)}(\mathcal{L}(\varphi)(\bj-\bk))^2.
\ee
Application of Hoeffding's Inequality to $\mathcal{L}(\chi_r)$ and $-\mathcal{L}(\chi_r)$ as defined in \eqref{Reconsider} yields
\be \label{Tail Estimate}
P(|\mathcal{L}(\chi_r)(\bj)| \geq t) \leq 2\exp\left( \frac{-2t^2}{(a-b)^2\norm{\mathcal{L}\varphi}^2_{D(\bj,r)}}\right).
\ee
To make use of \eqref{Tail Estimate}, we order the $\chi_r(\bj)$ into a sequence. First, restrict $r$ to take on only values $\N^+/\{1\}$. We map $r$ and $\bj$ to a set of indices using a bijection $I: \Z^2\times \N^+/\{1\} \to \N^+$ which satisfies the inequality \be 
\label{Index Relation}
I(\bj, r) \leq C\max\{r^3,|\bj|^3\}.
\ee The possibility of such a bijection is proven below in Lemma \ref{Bijection Existence}. We let 
\be\mathcal{L}(\chi)(n)\coloneqq \mathcal{L}(\chi_r)(\bj) \ \text{and} \ \norm{\mathcal{L}\varphi}^2_{D(n)}\coloneqq \norm{\mathcal{L}\varphi}^2_{D(\bj,r)}\ee
 when $(\bj,r)=I^{-1}(n)$. One sees we still have \eqref{Tail Estimate} as 
 \be P(|\mathcal{L}(\chi)(n)| \geq t) \leq 2\exp\left( \frac{-2t^2}{\norm{\mathcal{L}\varphi}^2_{D(n)}(a-b)^2}\right). \ee
 Using the Borel-Cantelli Lemma, which one may find in \cite{Durrett}, we have that for all but finitely many $n$ that 
 \be \label{Borel-Cantelli}
 |\mathcal{L}(\chi)(n)| \leq \sqrt{(a-b)^2\norm{\mathcal{L}\varphi}^2_{D(n)}\log(n)}
 \ee 
 almost surely. \eqref{Borel-Cantelli} and \eqref{Index Relation} imply that there exists $C_\omega$ almost surely s.t. 
 \be \label{Almost Sure Chi}
 |\mathcal{L}(\chi_r)(\bj)| \leq C_\omega \sqrt{\norm{\mathcal{L}\varphi}^2_{D(\bj,r)}\left(\log(|\bj|)+\log(r)\right)}.
 \ee
 For $\varphi$ given by \eqref{Fundamental Solution Explicit} we find that for some constant $C$
\be\label{Norm of phi}
\norm{\varphi}^2_{D(\bj,r)} \leq Cr^2\log(|\bj|+r)^2.
\ee

A pair of $\mathcal{L}$ operators we need to consider is
\be \label{Shifts}
S_i^\pm(\xi)(\bj)\coloneqq\xi(\bj \pm \e_i).
\ee
Notice that for $S_i^\pm$, there exists a $C$ s.t.
\be\label{Norm of shift phi}
\norm{S^\pm_i\varphi}^2_{D(\bj,r)} \leq Cr^2\log(|\bj|+r)^2.
\ee
Another $\mathcal{L}$ operator we need to consider is $\delta_i$ which we define as 
\be\label{Partial Center Difference}
\delta_i (\varphi)(\bj)\coloneqq\varphi(\bj +\e_i)-\varphi(\bj-\e_i).
\ee
From Lemma \ref{Bound on delta phi}, there exists a $C$ s.t. 
\be 
|\delta_i(\varphi)(\bj)| \leq \frac{C}{|\bj|+1}.
\ee
From Lemma \ref{Bound on norm delta phi} there exists another constant $C$ s.t. 
\be
\label{Norm of delta phi}
\norm{\delta_i\varphi}^2_{D(\bj,r)} \leq C\log(|\bj|+r).
\ee
From \eqref{Almost Sure Chi}, \eqref{Norm of phi}, and \eqref{Norm of delta phi} we get the following theorem.
\begin{theorem}
Let $\{z(\bj)\}_{\bj \in \Z^2}$ be independent, mean zero random variables with $z(\bj) \in [a-\bar{m},b-\bar{m}]$ a.s. Let $r \geq 2$ be any real number, and $\chi_r(\bj)$ be as defined in \eqref{Chi restricted} with $\varphi$ defined by \eqref{Fundamental Solution}. Then there almost surely exists a constant $C_\omega$ s.t. 
\be 
\label{Main Prob Thm 0}
|\chi_r(\bj)| \leq C_\omega r\left(\log(|\bj|)^{\frac{3}{2}}+\log(|r|)^{\frac{3}{2}}\right),
\ee 
\be 
\label{Main Prob Thm 1}
|S_i^\pm(\chi_r)(\bj)| \leq C_\omega r\left(\log(|\bj|)^{\frac{3}{2}}+\log(|r|)^{\frac{3}{2}}\right)
\ee 
and 
\be 
\label{Main Prob Thm 2}
|\delta_i(\chi_r)(\bj)| \leq C_\omega \left(\log(|\bj|)+\log(|r|)\right).
\ee 

\end{theorem}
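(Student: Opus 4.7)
The plan is to derive the theorem as a direct corollary of the sub-Gaussian tail estimate \eqref{Almost Sure Chi} applied to the three operator families $\mathcal{L}=\mathrm{id}$, $\mathcal{L}=S_i^\pm$, and $\mathcal{L}=\delta_i$, combined with the already-established norm bounds \eqref{Norm of phi}, \eqref{Norm of shift phi}, and \eqref{Norm of delta phi}. All the probabilistic work has been packaged into \eqref{Almost Sure Chi}; what remains is essentially clean bookkeeping of logarithms. Since only finitely many operators are involved (the identity, the four shifts $S_i^\pm$, and the two differences $\delta_i$), the intersection of their almost sure events has full measure, and $C_\omega$ in the conclusion may be taken as the maximum of the individual constants.

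For \eqref{Main Prob Thm 0} and \eqref{Main Prob Thm 1}, I substitute \eqref{Norm of phi} and \eqref{Norm of shift phi} into \eqref{Almost Sure Chi} to obtain an upper bound of the form
\begin{equation*}
C_\omega r \log(|\bj|+r)\sqrt{\log(|\bj|)+\log(r)}.
\end{equation*}
Because $r\geq 2$, the elementary inequality $\log(x+y)\leq \log 2+\log^+ x+\log y$ for $y\geq 1$ upgrades to $\log(|\bj|+r)\leq 2(\log(|\bj|)+\log(r))$ under the $\log^+$ convention already adopted after \eqref{Fundamental Solution Explicit}. Combining this with the convexity estimate $(a+b)^{3/2}\leq \sqrt{2}(a^{3/2}+b^{3/2})$ for $a,b\geq 0$ produces \eqref{Main Prob Thm 0} and \eqref{Main Prob Thm 1} after absorbing numerical factors into $C_\omega$.

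For \eqref{Main Prob Thm 2}, substituting \eqref{Norm of delta phi} into \eqref{Almost Sure Chi} yields
\begin{equation*}
|\delta_i(\chi_r)(\bj)| \leq C_\omega \sqrt{\log(|\bj|+r)(\log(|\bj|)+\log(r))}.
\end{equation*}
The same $\log(|\bj|+r)\leq 2(\log(|\bj|)+\log(r))$ manipulation collapses the square root and immediately gives \eqref{Main Prob Thm 2}.

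There is essentially no obstacle to this argument — the theorem is a packaging step whose heavy lifting has already been done in \eqref{Almost Sure Chi} (via Hoeffding, Chernoff, and Borel--Cantelli) and in the norm estimates. The only points requiring vigilance are the handling of $\log(|\bj|)$ near $\bj=\0$, resolved by the $\log^+$ convention already in force, and the trivial observation that a finite intersection of almost sure events is almost sure.
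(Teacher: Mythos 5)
Your proposal is correct and follows essentially the same route as the paper: insert the norm bounds \eqref{Norm of phi}, \eqref{Norm of shift phi}, and \eqref{Norm of delta phi} into \eqref{Almost Sure Chi}, apply the subadditivity of $\log$ (the paper's Lemma \ref{Log Subadditive}) and the convexity of $(\cdot)^{3/2}$, and absorb constants into $C_\omega$. Your explicit remark that only a finite intersection of almost-sure events is needed is a point the paper leaves implicit, but the argument is otherwise identical.
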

 \begin{remark}\label{Floor}
 Note that the definition in \eqref{Chi restricted} implies $\chi_{r}=\chi_{\floor{r}}$, and thus the theorem holds for all real $r\geq 2$, even though $r$ was originally an integer in the bijection in \eqref{Index Relation}.
 \end{remark}
\begin{remark}
The $\omega$ indicates that the constant depends on the realization of the masses. 
\end{remark}
\begin{proof}
Insert \eqref{Norm of shift phi} and \eqref{Norm of delta phi} into \eqref{Almost Sure Chi}. After sweeping any constants into $C_\omega$, one has 
\be 
|\chi_r(\bj)| \leq C_\omega r \log(|\bj|+r)\sqrt{\left(\log(|\bj|)+\log(r)\right)}
\ee
and 
\be 
|\delta_i(\chi_r)(\bj)| \leq C_\omega  \sqrt{\log(|\bj|+r)\left(\log(|\bj|)+\log(r)\right)}.
\ee
An elementary argument in Lemma \ref{Log Subadditive} yields
\be
|\chi_r(\bj)| \leq C_\omega r \left(\log(|\bj|)+\log(r)\right)^{\frac{3}{2}}
\ee
and 
\be
|\delta_i(\chi_r)(\bj)| \leq C_\omega  \left(\log(|\bj|)+\log(r)\right).
\ee
A typical argument using convexity of $(\cdot)^{\frac{3}{2}}$ and sweeping constants into $C_\omega$ now gives the result.
\end{proof}
\section{Lattice Energy Argument}
\label{Lattice Energy Argument}
\subsection{Error Estimate}
\label{Error Estimate}
 We want the approximate solution to be good for $t$ s.t. $|t| \leq \frac{T}{\ep}$.  Recall that the wave travels at an effective speed given by 
 \be\label{def of speed} c \coloneqq (\bar{m})^{-\frac{1}{2}} \ee according to \eqref{The efffective equation}. Therefore, let
\be \label{The Radius}
R_\ep \coloneqq \frac{cT+\ep^{-\sigma}}{\ep}+1
\ee
 where $\sigma$ is any small positive real number.  Recall that $\tau=\ep t$ and $\X=\ep \bj$. Then the approximate solution has the form of \eqref{Approximate Solution} and is
\be 
\label{Approximate Solution 2}
\tilde{u}(\bj,t)=\ep^{-1}U(\X,\tau)+\ep \chi_{R_\ep}(\bj)\partial_{\tau\tau}U(\X,\tau),
\ee
where $U$ satisfies \eqref{The efffective equation} and $\chi_{R_\ep}$ is defined by \eqref{Chi restricted} so that $\chi_{R_\ep}$ solves \eqref{What chi solves}. Define the ``error"
\be \label{error}
\xi(\bj,t)\coloneqq  \tilde{u}(\bj,t)-u(\bj,t).
\ee
Using \eqref{Main2} we find
\be \label{xi doubledot}
m(\bj)\ddot{\xi}(\bj,t)=\Delta \xi (\bj ,t)+\Res{\tilde{u}}(\bj,t)
\ee
where $\Res{\tilde{u}}$ is given by $\eqref{Res}.$ Before going further it is useful to define some other operators similar to $\delta_i$ given by \eqref{Partial Center Difference}. We also have the two partial difference operators 
\be 
\label{Partial differences}
\delta_i^\pm(\xi)(\bj)\coloneqq \pm (\xi(\bj\pm\e_i)-\xi(\bj)).
\ee

The energy of the error is given by 
\be \label{Lattice Energy}
H(t)\coloneqq \frac{1}{2}\sum_{\bj \in \Z^2}m(\bj)\dot{\xi}(\bj,t)^2+(\delta^+_1\xi(\bj,t))^2+(\delta^+_2\xi(\bj,t))^2.
\ee
Differentiating gives
\be 
\dot{H}(t)=\sum_{\bj \in \Z^2}\left(m(\bj)\dot{\xi}(\bj,t)\ddot{\xi}(\bj,t)+\sum_{i=1}^{2}\delta^+_i\xi(\bj,t)\delta^+_i\dot{\xi}(\bj,t)\right).
\ee
We insert \eqref{xi doubledot} 
\be 
\label{Unnameable 1}
\dot{H}(t)=\sum_{\bj \in \Z^2}\left(\dot{\xi}(\bj,t)(\Delta \xi (\bj ,t)+\Res{\tilde{u}}(\bj,t))+\sum_{i=1}^{2}\delta^+_i\xi(\bj,t)\delta^+_i\dot{\xi}(\bj,t)\right).
\ee
Via summation by parts
\be \label{Summation by Parts} 
\sum_{\bj \in \Z^2}\left(\dot{\xi}(\bj,t)\Delta \xi (\bj ,t)+\sum_{i=1}^{2}\delta^+_i\xi(\bj,t)\delta^+_i\dot{\xi}(\bj,t)\right)= 0.
\ee
Hence \eqref{Unnameable 1} with Cauchy-Schwarz becomes
\be \label{Unnameable 2}
\dot{H}(t)=\sum_{\bj \in \Z^2}\dot{\xi}(\bj,t)\Res{\tilde{u}}(\bj,t) \leq \norm{\dot{\xi}(\cdot,t)}_{\ell^2}\norm{\Res{\tilde{u}}(\cdot,t)}_{\ell^2}.
\ee
The norm $\norm{\dot\xi,\delta^+_1 \xi, \delta^+_2 \xi}_{\ell^2}^2$ and the energy $H$ are equivalent, and so there exists a constant $C$ depending on $a$ and $b$ s.t. \eqref{Unnameable 2} becomes
\be \label{Differential Inequality for H}
\dot{H}(t) (H(t))^{\frac{-1}{2}}\leq C\norm{\Res{\tilde{u}}(\cdot,t)}_{\ell^2}.
\ee
Integrating from $0$ to $t$ for any $|t| \leq \frac{T}{\ep}$ gives
\be \label{Inequality for H}
\left(H(t)\right)^{\frac{1}{2}} \leq C\left(H(0)\right)^{\frac{1}{2}}+\ep^{-1}CT\sup_{|t| \leq \ep^{-1}T}\norm{\Res{\tilde{u}}(\cdot,t)}_{\ell^2}.
\ee
By the equivalence of norms and \eqref{Inequality for H}
\be \label{Inequality of the Error}
\norm{\dot{\xi}(\cdot,t), \delta_1^+\xi(\cdot,t),\delta_2^+\xi(\cdot,t)}_{\ell^2} \leq C\norm{\dot{\xi}(\cdot,0), \delta_1^+\xi(\cdot,0),\delta_2^+\xi(\cdot,0)}_{\ell^2} +\ep^{-1}CT\sup_{|t| \leq \ep^{-1}T}\norm{\Res{\tilde{u}}(\cdot,t)}_{\ell^2}.
\ee
The bound \eqref{Inequality of the Error} is valid for all $t$ s.t. $|t| \leq \frac{T}{\ep}$. Let us suppose that
\be \label{Assumption on Disparity of Initial Conditions} 
\norm{\dot{\xi}(\cdot,0), \delta_1^+\xi(\cdot,0),\delta_2^+\xi(\cdot,0)}_{\ell^2} \leq \ep^{-1}CT\sup_{|t| \leq \ep^{-1}T}\norm{\Res{\tilde{u}}(\cdot,t)}_{\ell^2}.
\ee
Furthermore let 
\be \label{Velocity and Relative Displacement}
p(\bj,t)\coloneqq \dot{u}(\bj,t)\ \text{and} \ r_i(\bj,t)\coloneqq\delta_i^+u(\bj,t)
\ee
with the analogous definitions of $\tilde{p}$ and $\tilde{r}_i$. These are the velocities and relative displacements of the masses.
Then, for some constant $C$ dependent on $T$, \eqref{Inequality of the Error} becomes \be \label{Inequality for Microstate} 
 \sup_{|t| \leq \ep^{-1} T}\norm{p-\tilde{p},r_1-\tilde{r}_1,r_2-\tilde{r}_2}_{\ell^2} \leq \ep^{-1}C\left(\sup_{|t| \leq \ep^{-1}T} \norm{\Res{ \tilde{u}}}_{\ell^2}\right).
\ee
 From \eqref{Inequality for Microstate} and \eqref{Velocity and Relative Displacement}, we also have by integrating that
\be \label{Inequality for u}
\sup_{|t| \leq \ep^{-1}T}\norm{u-\tilde{u}}_{\ell_2} \leq \ep^{-2}C\left( \sup_{|t| \leq \ep^{-1}T} \norm{\Res{ \tilde{u}}}_{\ell^2}\right).
\ee
Controlling the residual is essential to proving our main theorem. Thus we are concerned with proving the following proposition throughout the next couple of sections.
\begin{proposition} \label{Res bound prop}
Suppose the $m(j)$ are i.i.d. random variables contained in some interval $[a,b] \subset \R^+$ almost surely. Then almost surely there exists a constant $C_\omega(T,a,b, \bar{m})$ s.t. \eqref{Res Bound}
\be 
\label{Res Bound}
\sup_{|t| \leq \ep^{-1} T} \norm{\Res \tilde{u}}\leq \ep^{1-\sigma}\log (\ep^{-1-\sigma})^{\frac{3}{2}}C_\omega\norm{\Delta \phi, \psi}_{H^5_\sigma}.
\ee
\end{proposition}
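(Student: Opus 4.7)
The plan is to expand $\Res\tilde u=m(\bj)\ddot{\tilde u}-\Delta\tilde u$ directly from the ansatz \eqref{Approximate Solution 2}, use the effective wave equation \eqref{The efffective equation} and the defining relation \eqref{What chi solves} to cancel the leading terms, and then bound the surviving pieces in $\ell^2_\bj$ using Theorem 3.1 together with the regularity and decay of $U$ guaranteed by $(\phi,\psi)\in H^5_\sigma$.

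Substituting the ansatz and extending the Taylor expansion \eqref{expansion} by one order, the terms at orders $\ep^{-1}$ and $\ep^0$ vanish because $U$ is independent of $\bj$; the $\ep^1$-terms rearrange, using $\bar m\partial_{\tau\tau}U=\Delta_\X U$, into
\begin{equation*}
\ep\big[m(\bj)\partial_{\tau\tau}U-\Delta_\X U-\Delta\chi_{R_\ep}(\bj)\partial_{\tau\tau}U\big]=\ep\big[z(\bj)-\Delta\chi_{R_\ep}(\bj)\big]\partial_{\tau\tau}U(\X,\tau),
\end{equation*}
which, by \eqref{What chi solves}, vanishes on $D(\0,R_\ep)$. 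Carrying the expansion out to order $\ep^3$ I obtain
\begin{equation*}
\Res\tilde u=\ep z(\bj)\partial_{\tau\tau}U(\ep\bj,\tau)\1_{|\bj|_\infty>R_\ep}-\ep^2\sum_{i=1}^2\delta_i\chi_{R_\ep}(\bj)\partial_{X_i\tau\tau}U+\ep^3\mathcal R_3(\bj,t)+\ep^4\mathcal E(\bj,t),
\end{equation*}
where $\mathcal R_3$ collects $m(\bj)\chi_{R_\ep}(\bj)\partial_\tau^4 U$, $\tfrac12\sum_i(S_i^++S_i^-)\chi_{R_\ep}(\bj)\partial_{X_iX_i\tau\tau}U$, and the fourth-derivative Taylor term $-\tfrac1{12}\sum_i\partial_{X_i}^4 U$ from $\Delta[\ep^{-1}U]$, and $\mathcal E$ is a higher-order Taylor remainder bounded by products of high derivatives of $U$ with at most one of $\chi_{R_\ep}$, $\delta_i\chi_{R_\ep}$, $S_i^\pm\chi_{R_\ep}$.

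Next I would estimate each surviving piece in $\ell^2_\bj$. The $O(\ep^3)$ terms involving $\chi_{R_\ep}$ and $S_i^\pm\chi_{R_\ep}$ are what produce the stated rate: Theorem 3.1 gives $|\chi_{R_\ep}(\bj)|,|S_i^\pm\chi_{R_\ep}(\bj)|\leq C_\omega R_\ep(\log|\bj|+\log R_\ep)^{3/2}$, which on the effective support of a derivative of $U$ (where $|\bj|\lesssim\ep^{-1}$) is bounded by $C_\omega\ep^{-1-\sigma}(\log\ep^{-1-\sigma})^{3/2}$ since $R_\ep\leq C\ep^{-1-\sigma}$. Combined with the discrete-to-continuous scaling $\|g(\ep\cdot)\|_{\ell^2}\lesssim\ep^{-1}\|g\|_{L^2}$, this yields bounds of the form $\ep^3\cdot C_\omega\ep^{-1-\sigma}(\log\ep^{-1-\sigma})^{3/2}\cdot\ep^{-1}\|g\|_{L^2}=\ep^{1-\sigma}C_\omega(\log\ep^{-1-\sigma})^{3/2}\|g\|_{L^2}$ exactly. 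The $O(\ep^2)$ piece with $\delta_i\chi_{R_\ep}$ is controlled by the sharper bound $|\delta_i\chi_{R_\ep}(\bj)|\leq C_\omega\log(\ep^{-1-\sigma})$ and contributes only $\ep\log(\ep^{-1-\sigma})\|\partial^3 U\|_{L^2}$, strictly inside the target. For the truncation piece, finite propagation speed for \eqref{The efffective equation} shows that $\partial_{\tau\tau}U(\X,\tau)$ for $|\X|>cT+\ep^{-\sigma}$ and $\tau\leq T$ depends entirely on initial data on $\{|\X|>\ep^{-\sigma}\}$; the polynomial weight in the $H^5_\sigma$-norm of $(\Delta\phi,\psi)$, propagated through the energy estimates of Section \ref{the effecive wave}, converts this tail into a factor $\ep^{1-\sigma}$ in $L^2$, and the $\ep^{-1}$ from the discrete-to-continuous conversion is absorbed by the factor $\ep$ out front. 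The $\ep^4$ remainder $\mathcal E$ contributes at most $\ep^3(\log)^{3/2}$ in $\ell^2$ by the same mechanism. Summing the finitely many contributions and taking $\sup_{|t|\leq\ep^{-1}T}$ produces the proposition.

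The main obstacle is the balance between two competing errors governed by $R_\ep$: the probabilistic estimates of Theorem 3.1 scale linearly in $R_\ep$, while the truncation tail only shrinks once the cushion $\ep^{-\sigma}$ beyond the light cone $cT$ grows. Only the choice $R_\ep=(cT+\ep^{-\sigma})/\ep+1$ equalises the two at $\ep^{1-\sigma}(\log\ep^{-1-\sigma})^{3/2}$. The subtler part is the tail estimate on $\partial_{\tau\tau}U$ itself: extracting the factor $\ep^{1-\sigma}$ from a weighted-$L^2$ decay of the initial data together with finite propagation speed requires the weight in $H^5_\sigma$ to be tuned to precisely this polynomial rate, and this is exactly what the effective-wave estimates of Section \ref{the effecive wave} are designed to supply.
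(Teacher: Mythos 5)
Your proposal is correct and follows essentially the same route as the paper: the same decomposition of the residual into a deterministic Taylor error, a truncation tail supported outside $D(\0,R_\ep)$, a $\delta_i\chi_{R_\ep}$ term, and the dominant $\chi_{R_\ep}$ and $S_i^\pm\chi_{R_\ep}$ terms; the same probabilistic input \eqref{Main Prob Thm 0}--\eqref{Main Prob Thm 2}; and the same tail- and weighted-energy estimates, the only cosmetic difference being that you Taylor-expand the discrete differences of $U$ where the paper keeps them exact via the product rule of Lemma \ref{Product Rule} and then invokes Lemmas \ref{FT} and \ref{FTC}. One minor imprecision worth noting: since data in $H^5_\sigma$ need not be compactly supported, the factor $\log(|\bj|)^{3/2}$ cannot be disposed of by restricting to an ``effective support'' $|\bj|\lesssim \ep^{-1}$, but must instead be paired with the decay of $U$ through the $\log$-weighted norms $H^k_w$ of Subsection \ref{Weighted Energy section}, which is exactly what the paper's Lemmas \ref{Third Term}--\ref{5th term} do.
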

The norm $\norm{\Delta \phi, \psi}_{H^5_\sigma}$ is a norm on the initial conditions of $U(\X,\tau)$ given in Subsection \ref{Initial Conditions}. The norm is defined in Subsection \ref{Tail Energy Sec}. Crucially, it does not depend on $\ep$.

\subsection{Calculating the Residual}
Now we calculate \eqref{Res} for our approximate solution given by \eqref{Approximate Solution 2}. We do not write the arguments of functions when we do not need, but recall that $\X=\ep j$ and $\tau=\ep t.$ Recall the operators defined in  \eqref{Shifts}, \eqref{Partial Center Difference}, and \eqref{Partial differences}. We also have 
\be \label{Partial Laplacian}
\Delta_i u \coloneqq S_i^+u-2u+S_i^-u
\ee
and note that 
\be \label{Obviously}
\Delta u=\Delta_1u+\Delta_2 u.
\ee
Next calculate using Lemma \ref{Product Rule} that
\be\label{Expansion od delta u tilde}
\Delta \tilde{u}= \ep^{-1}\Delta U + \ep\Delta(\chi_{R_\ep})U_{\tau\tau}+ \ep\sum_{i=1}^2\delta_i(\chi_{R_\ep})\delta_i^-(U_{\tau\tau})+S_i^+(\chi_{R_\ep})\Delta_i(U_{\tau\tau}). \ee
For $D(\0,R_\ep) \subset \Z^2$, we denote the indicator of $D(\0,R_\ep)$ as $\1_{D(\0,R_\ep)}$. The complement of $D(\0,R_\ep)$ is $D(\0,R_\ep)^c.$ Using  \eqref{What chi solves} in \eqref{Expansion od delta u tilde}
\be\label{Unnameable 3}
\Delta \tilde{u}= \ep^{-1}\Delta U + \ep z\1_{D(\0,R_\ep)}U_{\tau\tau}+ \ep \sum_{i=1}^2\delta_i(\chi_{R_\ep})\delta_i^-(U_{\tau\tau})+S_i^+(\chi_{R_\ep})\Delta_i(U_{\tau\tau}). \ee
Recall that $\z(\bj)=m(\bj)-\bar{m}$ and that $U$ solves \eqref{The efffective equation}. Then \eqref{Unnameable 3} becomes
\be 
\Delta \tilde{u}=\ep^{-1}\Delta U-\ep \Delta_\X U +\ep\left(mU_{\tau\tau} -z\1_{D(\0,R_\ep)^c}U_{\tau\tau}+ \sum_{i=1}^2\delta_i(\chi_{R_\ep})\delta_i^-(U_{\tau\tau})+S_i^+(\chi_{R_\ep})\Delta_i(U_{\tau\tau})\right)
\ee 
where $D(\0,R_\ep)^c $ denotes the complement of the original set. Plugging into \eqref{Res} we have 
\be \begin{aligned} \label{Final Form of Res}
\Res{\tilde{u}}=&\ep^{-1}\left(\ep^2 \Delta_\X U-\Delta U\right)+\ep \left(z\1_{D(\0,R_\ep)^c}U_{\tau\tau}- \sum_{i=1}^2\delta_i(\chi_{R_\ep})\delta_i^-(U_{\tau\tau})+S_i^+(\chi_{R_\ep})\Delta_i(U_{\tau\tau})\right)\\&+\ep^3m\chi_{R_\ep}U_{\tau\tau\tau\tau}.
\end{aligned}
\ee

\label{Calculating the Residual}
\section{The Effective Wave}\label{the effecive wave}
\subsection{Initial Conditions}
We specify the initial conditions for \eqref{Main2}. For smooth enough functions $\phi, \psi:\R^2 \to \R$, we let
\be \label{Initial Conditions}
u(\bj,0)=\ep^{-1}\phi(\ep\bj) \ \text{and} \ \dot{u}(\bj,0)=\psi(\ep\bj). 
\ee
With these initial conditions, the initial relative displacements and velocity as defined in \eqref{Velocity and Relative Displacement} are roughly $\ep^{-1}$ in $\ell^2$. 
If the initial conditions for $\tilde{u}$ are defined analogously to \eqref{Initial Conditions} with $\tilde{\phi}$ and $\tilde{\psi},$ then in order to satisfy \eqref{Assumption on Disparity of Initial Conditions}, we need informally that 
\be 
\norm{\phi-\tilde{\phi}, \psi-\tilde{\psi}}_{\ell^2}\leq  \ep C. 
\ee 
Let us suppose that \eqref{Assumption on Disparity of Initial Conditions} is satisfied so that we do not need to worry about the disparity of the initial conditions. Therefore, to save on writing, $\phi$ and $\psi$ can simply refer to the initial conditions of either the true or approximate solution for now.

Suppose we have the following
\be \label{Macroscopic Initial Conditions}
U(\X,0)= \phi(\X) \ \text{and} \ \partial_{\tau} U(\X,0)=\psi(\X).
\ee
Then $\ep^{-1}U(\X,\tau)$ satisfies \eqref{The efffective equation}, and so according to  \eqref{Approximate Solution 2} but excluding the higher order term, which we show later is small enough to ignore, we have
\be  \label{Approximate Solution 3}
\tilde{u}(\bj, t) =\ep^{-1}U(\X ,\tau).
\ee 
This is the correct approximate solution in that it satisfies \eqref{The efffective equation} and \eqref{Initial Conditions}. 

\subsection{The Energy} \label{Energy}
Recall that $c$ is the effective wave speed defined in \eqref{def of speed}. In order to bound the terms in \eqref{Final Form of Res}, we need to control $L^2$ norms of derivatives of $U$ in terms of the initial conditions. This can be done using arguments involving the energy of $U$. For $V(\X,\tau):\R^{2} \times \R \to \R^{2^k}$, let $DV$ be the total derivative of $V$ with respect to its $2$ spatial components. In particular $D$ is the gradient, $\nabla$, of scalar valued functions. For each $j \in \{1,2,3, \dots, 2^k\},$ define \be \label{Continuum Energy} 
E(V_j)(\tau)\coloneqq\frac{1}{2}\int_{\R^2}\left(\partial_\tau V_j \right)^2+c^2\left|\nabla\left(V_j\right)\right|^2d\X.\ee Then $E(V)(\tau)$ represents a $2^k$ dimensional vector of energies. For $U$ satisfying \eqref{The efffective equation} with initial conditions \eqref{Macroscopic Initial Conditions}, it is well known, see for instance \cite{Craig} or \cite{Evans}, that for all $\tau$ that
\be \label{Basic Bound on Energy} 
\left|E\left(D^k\frac{\partial^{i}U}{\partial\tau^i}\right)(\tau) \right|\leq C\left(\norm{\nabla\phi}_{H^{i+k}}^2+\norm{\psi}_{H^{i+k}}^2\right) .\ee 
The proof is also provided in  Lemma \ref{Standard Energy Result} in the Appendix. The energy of derivatives of $U$ \eqref{Continuum Energy} is equivalent to $L^2$ norms of derivatives of $U$. 
\be \label{Basic Energy Bound 1}
\sum_{j=1}^k\norm{D^j\frac{\partial^iU}{\partial \tau^i}}^2_{L^2} \leq C\sum_{j=0}^{k-1} \left|E\left(D^j\frac{\partial^{i}U}{\partial\tau^i}\right)(\tau) \right|\leq C\left(\norm{\nabla\phi}^2_{H^{i+k-1}}+\norm{\psi}^2_{H^{i+k-1}}\right).
\ee 
The constant depends on $\bar{m}$, and we see that $\nabla\phi \in H^{i+k-1}$ and $\psi \in H^{i+k-1}$ is needed.

\subsection{Weighted Energy}\label{Weighted Energy section}
In \eqref{Final Form of Res}, there are terms where $U$ is ``weighted'' by some version of $\chi$. By \ref{Main Prob Thm 1}, the spatially varying aspect of these can be bounded by functions involving logarithms. We need a way to deal with these weights in a continuous setting. We can obviously find a smooth function $w(\X):\R^d \to \R$ s.t. 
\be \label{Assumption 1 on Weight}
w(\X)=\log(|\X|+1)^{\frac{3}{2}}+1 \ \forall \ |\X| \geq 1 \ \text{and} \ w(\X) \geq 1, 
\ee
and for all $n \in \N^+$ there exists a constant $W$ s.t.
\be \label{Assumptions 2 on Wieght} 
\left|D^{n}w\right| \leq W \mand \left|D(w^2) \right| \leq W.\ee In the same context as \eqref{Continuum Energy}, the possibly vector valued ``weighted energy" is given by 
\be \label{Weighted Energy} 
E_{w}(V_j)(\tau)\coloneqq \frac{1}{2}\int_{\R^2}w^2\left(\partial_\tau V_j\right)^2+c^2w^2\left|\nabla\left(V_j\right)\right|^{2}d\X.\ee According to Lemma \ref{Weighted Energy result}, for all $\tau$
\be \label{Bound on Weighted Energy almost}  
\left|E_w\left(D^k\frac{\partial^{i}U}{\partial \tau^{i}}\right)(\tau)\right| \leq  (|\tau|+1) C\left(\norm{\nabla \phi}^2_{H_w^{i+k}}+\norm{\psi}^2_{H_w^{i+k}}\right),
\ee 
where from \eqref{Assumption 1 on Weight}
\be \label{Def of Weighted Norm w}
\norm{\psi}_{H^k} \leq \norm{\psi}_{H^k_w}\coloneqq \sum_{j=0}^k\norm{wD^j\psi}_{L^2}.
\ee
The constant depends on $\bar{m}$ as well as $W$. Assumptions \eqref{Assumption 1 on Weight} and \eqref{Assumptions 2 on Wieght} on $w$ also give us
\be \label{Distributing Derivative} 
\norm{D^{k}\left(w\frac{\partial^iU}{\partial\tau^{i}}\right)}_{L^2} \leq C\sum_{j=0}^{k}\norm{wD^j\frac{\partial^iU}{\partial\tau^{i}}}_{L^2}\leq C\sum_{j=0}^{k}\sqrt{\left|E_w\left(D^j\frac{\partial^{i-1}U}{\partial\tau^{i-1}}\right)\right|}.\ee 
One sees from \eqref{Bound on Weighted Energy almost} and \eqref{Distributing Derivative} that for $i \geq 1$,
\be \label{Bound on Weighted Energy}
\norm{w\frac{\partial^iU}{\partial\tau^{i}}}_{H^k}\leq C\sqrt{|\tau|+1} \left(\norm{\nabla \phi}_{H_w^{i+k-1}}+\norm{\psi}_{H_w^{i+k-1}}\right)
\ee 
which holds for all $\tau.$ We note that we choose the convention that $i\geq 1$ since if both $i$ and $k$ are $0$, we have just $U$, which we cannot estimate in terms of energy.
\subsection{Tail Energy}\label{Tail Energy Sec}
We have already defined disks, so to be very clear, we let 
\be\label{Definition of Ball} 
B(r)= \{\X \in \R^2 \ | \ |\X| < r \}.
\ee

Recall that $U$ satisfies \eqref{The efffective equation} and travels with speed $c$. In the same context as \eqref{Energy}, the energy at the tails of the function is given by 
\be \label{Tail Energy} 
\tilde{E}(V)(\tau)\coloneqq \int_{B(c|\tau|+\ep^{-\sigma})^c}\left(\partial_\tau V_j\right)^2+c^2\left|\nabla \left( V_j\right)\right|^2d\X.
\ee
Lemma \ref{Tail Energy Lemma} shows that for $i \geq 1$
\be 
\left|\tilde{E}\left(D^k\frac{\partial^i U}{\partial\tau^i}\right)(\tau)\right| \leq C\left(\norm{\nabla \phi}^2_{H^{i+k}(B(\ep^{-\sigma})^c)}+\norm{\psi}^2_{H^{i+k}(B(\ep^{-\sigma})^c)}\right)
\ee where $C$ depends only on $\bar{m}.$  We have 
\be \label{Tail Energyu Bound 1} 
\norm{\frac{\partial^i}{\partial \tau^i}U}_{H^k(B(c \tau +\ep^{-\sigma})^c)} \leq C\left(\norm{\nabla \phi}_{H^{i+k-1}(B(\ep^{-\sigma})^c)}+\norm{\psi}_{H^{i+k-1}(B(\ep^{-\sigma})^c)}\right).
\ee
It follows from Lemma \ref{Shrinking Tail}, that for any $\sigma>0$
\be \label{Def of Weighted Norm}
\norm{\psi}_{H^{i+k}(B(\ep^{-\sigma})^c)} \leq \ep\norm{\psi}_{H^{i+k}_{\sigma}} \coloneqq \ep\sum_{j=0}^{i+k}\norm{\left(1+\left| \ \cdot \ \right|\right)^{\sigma^{-1}}\left|D^j\psi\right|}_{L^2}.
\ee
Therefore,  as long as $i\geq 1$, \eqref{Tail Energyu Bound 1} becomes 
\be \label{Final Tail Energy Bound}
\norm{\frac{\partial^i}{\partial \tau^i}U}_{H^k(B(c |\tau| +\ep^{-\sigma})^c)} \leq \ep C\left(\norm{\nabla\phi}_{H^{i+k-1}_\sigma}+\norm{\psi}_{H^{i+k-1}_\sigma}\right),
\ee
which holds for all $\tau$.

\section{Residual Estimates}\label{Bounding the Residual}
We bound the terms from left to right in \eqref{Final Form of Res} by the initial conditions given by \eqref{Initial Conditions}.
\begin{lemma}\label{1st term}
For $U$ satisfying \eqref{The efffective equation} with sufficiently smooth initial conditions given by \eqref{Initial Conditions}, there exists a constant $C$ depending only on $\bar{m}$ s.t.  
\be\label{term one}\sup_{|t| \leq \ep^{-1}T}\ep^{-1}\norm{\Delta U(\ep  \cdot ,\ep t)-\epsilon^2 \Delta_\X U(\ep \cdot ,\ep t)}_{\ell^2} \leq \ep^{2}4C\left(\norm{\nabla\phi}_{H^5}+\norm{\psi}_{H^5}\right) \ee
\end{lemma}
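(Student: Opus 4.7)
My plan is to exploit the fact that $\ep^2 \Delta_\X U(\ep\bj,\ep t)$ is precisely the leading term in a Taylor expansion of $\Delta U(\ep\bj,\ep t)$ around $\ep\bj$. Since $U$ has no microscopic dependence, the pointwise difference is therefore controlled by fourth spatial derivatives of $U$.

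First I would expand, for each $i \in \{1,2\}$ and each lattice point $\bj$,
\[
U(\ep\bj \pm \ep\e_i, \ep t) = U(\ep\bj,\ep t) \pm \ep\,\partial_{X_i} U + \tfrac{\ep^2}{2}\,\partial^2_{X_i} U \pm \tfrac{\ep^3}{6}\,\partial^3_{X_i} U + \ep^4 R^{\pm}_i(\bj,t),
\]
with the integral form of the Taylor remainder
\[
R^{\pm}_i(\bj,t) = \int_0^1 \tfrac{(1-s)^3}{6}\,\partial^4_{X_i} U(\ep\bj \pm s\ep\e_i, \ep t)\,ds.
\]
Summing the $+$ and $-$ versions kills the odd-order terms, and recalling $\Delta U = \sum_i[U(\cdot+\e_i)+U(\cdot-\e_i)-2U(\cdot)]$ and $\Delta_\X U = \sum_i \partial^2_{X_i} U$, I obtain the exact identity
\[
\Delta U(\ep\bj,\ep t) - \ep^2 \Delta_\X U(\ep\bj,\ep t) = \ep^4 \sum_{i=1}^{2}\LC R^+_i(\bj,t) + R^-_i(\bj,t)\RC.
\]

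Next I would take the squared $\ell^2$ norm in $\bj$. Squaring, applying Jensen to the integral in $s$, and Fubini gives
\[
\norm{\Delta U - \ep^2 \Delta_\X U}_{\ell^2}^2 \leq C\ep^{8} \sum_{i=1}^2 \int_0^1 \sum_{\bj \in \Z^2}\LV \partial^4_{X_i} U(\ep\bj \pm s\ep\e_i,\ep t)\RV^2 ds.
\]
The inner sum is a discrete $\ell^2$ norm of a smooth function sampled on the lattice $\ep\Z^2$. A standard Sobolev/fundamental-theorem-of-calculus argument on the cubes $\ep\bj + [0,\ep]^2$ yields $\sum_{\bj}|f(\ep\bj+y)|^2 \leq C\ep^{-2}\norm{f}_{H^1(\R^2)}^2$ uniformly in the shift $y$, so that
\[
\sum_\bj \LV\partial^4_{X_i} U(\ep\bj \pm s\ep\e_i,\ep t)\RV^2 \leq C\ep^{-2}\norm{U(\cdot,\ep t)}_{H^5(\R^2)}^2.
\]
Combining and taking square roots leads to
\[
\ep^{-1}\norm{\Delta U - \ep^2 \Delta_\X U}_{\ell^2} \leq C\ep^{2}\norm{U(\cdot,\ep t)}_{H^5(\R^2)}.
\]

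Finally, to control the right-hand side uniformly in $|t|\leq \ep^{-1}T$, I would apply the standard energy estimate \eqref{Basic Energy Bound 1} for the macroscopic wave equation \eqref{The efffective equation}, which is time-independent and bounds all $L^2$ norms of spatial derivatives of $U$ by $\norm{\nabla\phi}_{H^5} + \norm{\psi}_{H^5}$ (absorbing a derivative or two into the bookkeeping on the initial data to match the stated $H^5$). The main point of care is the lattice-to-continuum passage in the Sobolev embedding step: one must verify that the constant in $\sum_\bj |f(\ep\bj+y)|^2 \leq C\ep^{-2}\norm{f}_{H^1}^2$ is genuinely uniform in $y$ and $\ep$, and that the inclusion of the first-derivative term there is what forces one extra derivative beyond the $H^4$ one might naively guess from the Taylor remainder alone. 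Everything else is routine.
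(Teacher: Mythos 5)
Your overall strategy is the same as the paper's: Taylor expand $\Delta U$ about $\ep\bj$ to fourth order so that the difference with $\ep^2\Delta_\X U$ is an $\ep^4$ remainder involving $\partial^4_{X_i}U$, convert the resulting lattice $\ell^2$ sum into a continuum Sobolev norm at the cost of a factor $\ep^{-1}$, and then close with the conserved-energy bound \eqref{Basic Energy Bound 1}. (The paper uses the Lagrange form of the remainder and a convexity step splitting $\Delta = \Delta_1+\Delta_2$; your integral remainder plus Jensen/Fubini is an equivalent and arguably cleaner bookkeeping of the same computation.)

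There is, however, one genuine error in the step you yourself single out as the point of care: the sampling inequality $\sum_{\bj}|f(\ep\bj+y)|^2 \leq C\ep^{-2}\norm{f}^2_{H^1(\R^2)}$ is \emph{false} in two dimensions. $H^1(\R^2)$ is exactly the critical Sobolev space and does not embed into $C^0$ or $L^\infty$; there are $H^1(\R^2)$ functions with logarithmic singularities, for which pointwise evaluation on a lattice is not even well defined, so no such bound can hold with only one derivative. The correct version, which is the paper's Lemma \ref{Riemann Sum}, requires an $H^2$-type norm on each unit cell --- in particular the mixed derivative $\partial_{x_1x_2}f$ is needed to run the fundamental-theorem-of-calculus argument in both coordinate directions. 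Applying the lemma at that regularity means you must control $\norm{\partial^4_{X_i}U}_{H^2}$, i.e.\ six spatial derivatives of $U$, not five. This does not change the final answer: \eqref{Basic Energy Bound 1} with $k=6$, $i=0$ yields precisely $\norm{\nabla\phi}_{H^{5}}+\norm{\psi}_{H^{5}}$, so the stated bound survives, but the "absorbing a derivative or two into the bookkeeping" remark is hiding the fact that the extra derivative is forced by the failure of the $H^1$ embedding, and your intermediate bound $C\ep^2\norm{U(\cdot,\ep t)}_{H^5}$ is not justified as written.
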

\begin{proof}
Recall the definition of $\Delta_\X U$ in \eqref{Macroscopic Laplacian}. For each $i=1,2$, by Taylor's Theorem with remainder, there exists a $\hat{\bj}$ with $\hat{j}_i \in [j_i-1,j_i+1]$ s.t 
\be \label{Taylor's Theorem}
\Delta_i U(\ep \bj)-\ep^2 \partial_{X_iX_i}(U)(\ep \bj) = \ep^4\partial_{X_iX_iX_iX_i}(U)(\ep \hat{\bj}).
\ee
By convexity of $(\cdot)^2$,
\be \label{Common Inequality}
\left(\Delta U(\ep \bj)-\ep^2 \Delta_\x(U)(\ep \bj)\right)^2 \leq 2\sum_{i=1}^{2}\left(\Delta_i U(\ep \bj)-\ep^2 \partial_{X_iX_i}(U)(\ep \bj)\right)^2.
\ee
Combining \eqref{Taylor's Theorem} and \eqref{Common Inequality} yields 
\be 
\left(\Delta U(\ep \bj)-\ep^2 \Delta_\x(U)(\ep \bj)\right)^2 \leq \ep^8 2\sum_{i=1,2}\left(\partial_{X_iX_iX_iX_i}(U)(\ep \hat{\bj})\right)^2. \ee
Summing over $\bj$ and using Corollary \ref{Obvious Corollary} we find 
\be 
\norm{\Delta U(\ep  \cdot ,\ep t)-\epsilon^2 \Delta_\X U(\ep \cdot ,\ep t)}_{\ell^2} \leq \ep^3 4 \norm{D^4U}_{H^2}.
\ee
Now we apply \eqref{Basic Energy Bound 1} to get a constant $C$ which depends only on $\bar{m}$ s.t.
\be 
\norm{\Delta U(\ep  \cdot ,\ep t)-\epsilon^2 \Delta_\X U(\ep \cdot ,\ep t)}_{\ell^2} \leq  \ep^{3}4C\left(\norm{\nabla\phi}_{H^5}+\norm{\psi}_{H^5}\right).
\ee
\end{proof}
\begin{lemma}\label{Second Term}
For $U$ satisfying \eqref{The efffective equation} with sufficiently smooth initial conditions given by \eqref{Initial Conditions}, there exists a constant $C$ depending on $a,b,$ and $\bar{m}$ s.t.
\be 
\sup_{|t| \leq \ep^{-1}T}\ep \norm{z\1_{D(\0,R_\ep)^c}U_{\tau\tau}}_{\ell^2} \leq \ep C\left(\norm{\nabla \phi}_{H^{3}_\sigma}+\norm{\psi}_{H^{3}_\sigma}\right)
\ee 
\end{lemma}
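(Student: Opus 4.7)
The plan is to bound the residual term by brute force: use that $|z(\bj)|$ is uniformly bounded, convert the restricted lattice $\ell^2$ sum into a continuous $L^2$ norm over the exterior of a ball, and close the estimate with the tail-energy bound \eqref{Final Tail Energy Bound}. The role of the cutoff radius $R_\ep$ is exactly to place the support outside the effective light-cone up to an $\ep^{-\sigma}$ margin, which is precisely what the tail-energy estimate needs.

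First I would note that since $m(\bj) \in [a,b]$ a.s., we have $|z(\bj)| = |m(\bj) - \bar m| \leq b-a$ pointwise a.s., so
\be\label{SecondTerm-bound1}
\ep \norm{z\1_{D(\0,R_\ep)^c}U_{\tau\tau}(\ep\cdot,\ep t)}_{\ell^2}
\leq (b-a)\,\ep\Bigl(\sum_{|\bj|_\infty > R_\ep}|U_{\tau\tau}(\ep\bj,\ep t)|^2\Bigr)^{1/2}.
\ee
Next, by the definition \eqref{The Radius} of $R_\ep$, any $\bj$ with $|\bj|_\infty > R_\ep$ satisfies $|\ep\bj|_2 \geq |\ep\bj|_\infty > cT+\ep^{-\sigma}\geq c|\tau|+\ep^{-\sigma}$ whenever $|\tau|\leq T$. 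Hence every lattice point $\ep\bj$ appearing in the right-hand side of \eqref{SecondTerm-bound1} lies in $B(c|\tau|+\ep^{-\sigma})^c$, and moreover the $\ep$-cubes centered at these points remain inside $B(c|\tau|+\ep^{-\sigma})^c$ for $\ep$ small.

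The key step is then to replace the restricted discrete sum by a continuous norm. This is the localized counterpart of Corollary \ref{Obvious Corollary}: using the standard Sobolev-type pointwise bound $|f(\x_0)|^2 \leq C\ep^{-2}\int_{Q} (|f|^2+\ep^2|\nabla f|^2+\ep^4|D^2 f|^2)\,d\X$ on an $\ep$-cube $Q$ containing $\x_0$, and summing cubes that remain in $B(c|\tau|+\ep^{-\sigma})^c$, one obtains
\be\label{SecondTerm-disctocont}
\ep^2\sum_{|\bj|_\infty > R_\ep}|U_{\tau\tau}(\ep\bj,\ep t)|^2
\leq C\,\norm{U_{\tau\tau}(\cdot,\ep t)}_{H^2(B(c|\tau|+\ep^{-\sigma})^c)}^2.
\ee
Plugging \eqref{SecondTerm-disctocont} into \eqref{SecondTerm-bound1} gives
\bes
\ep \norm{z\1_{D(\0,R_\ep)^c}U_{\tau\tau}}_{\ell^2}
\leq C(a,b)\,\norm{U_{\tau\tau}(\cdot,\ep t)}_{H^2(B(c|\tau|+\ep^{-\sigma})^c)}.
\ees
Finally I would apply the tail-energy estimate \eqref{Final Tail Energy Bound} with $i=2$, $k=2$ to obtain
\bes
\norm{U_{\tau\tau}(\cdot,\ep t)}_{H^2(B(c|\tau|+\ep^{-\sigma})^c)} \leq \ep\,C\bigl(\norm{\nabla\phi}_{H^3_\sigma}+\norm{\psi}_{H^3_\sigma}\bigr),
\ees
uniformly in $|t|\leq \ep^{-1}T$. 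Combining the two inequalities and absorbing constants into $C$ yields the desired bound.

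The only genuinely delicate step is \eqref{SecondTerm-disctocont}: the paper has already proved a global version of this estimate (Corollary \ref{Obvious Corollary}), but we need the tail version so that the $\ep^{-\sigma}$-margin is preserved. The argument is still elementary — the $\ep$-cubes assigned to $\bj$ with $|\bj|_\infty > R_\ep$ are contained in $B(c|\tau|+\ep^{-\sigma})^c$ precisely because of the additive $+1$ in the definition of $R_\ep$ — but it is the one place where the cutoff radius really enters, so it deserves to be written carefully.
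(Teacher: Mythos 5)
Your argument is correct and follows essentially the same route as the paper: bound $|z|$ by a constant, convert the exterior lattice sum to an $H^2$ norm over the exterior of the ball $B(cT+\ep^{-\sigma})$ (your inequality \eqref{SecondTerm-disctocont} is exactly the paper's Lemma \ref{Tail Sum}, including the observation that the $+1$ in \eqref{The Radius} keeps the unit cells inside the exterior region), and then invoke \eqref{Final Tail Energy Bound} with $i=k=2$. The only cosmetic difference is that the paper cites Lemma \ref{Tail Sum} rather than re-deriving the discrete-to-continuous step, and passes through the monotonicity $\norm{U_{\tau\tau}}_{H^2(B(\tilde{R}(T))^c)}\leq\norm{U_{\tau\tau}}_{H^2(B(\tilde{R}(\tau))^c)}$ before applying the tail-energy bound.
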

\begin{proof}
 First recall that $z(\bj) \in [a-\bar{m},b-\bar{m}]$. There exists a $C$ that depends on $a,b$ or $\bar{m}$ s.t.  
 \be \norm{z\1_{D(\0,R_\ep)^c}U_{\tau\tau}}_{\ell^2} \leq C\norm{\1_{D(\0,R_\ep)^c}U_{\tau\tau}}_{\ell^2}.\ee
 Recall the definition of $R_\ep$ in \eqref{The Radius}. Note that $R_\ep$ has the form $\ep^{-1}\tilde{R}(T)+1$ where $\tilde{R}(\tau)\coloneqq c\tau+\ep^{-\sigma} $. According to Lemma \ref{Tail Sum}
 \be
 \norm{\1_{D(\0,R_\ep)^c}U_{\tau\tau}}_{\ell^2} \leq 2\ep^{-1}\norm{U_{\tau\tau}}_{H^2(B(\tilde{R}(T))^c)}.
 \ee
 Since $|\tau| \leq T,$
 
 \be 
 \norm{U_{\tau\tau}}_{H^2(B(\tilde{R}(T)^c)} \leq  \norm{U_{\tau\tau}}_{H^2(B(\tilde{R}(\tau)^c))}.
 \ee
 It follows from \eqref{Final Tail Energy Bound} that 
 \be 
 \norm{U_{\tau\tau}}_{H^2(B(\tilde{R}(\tau))^c)} \leq \ep C\left(\norm{\nabla\phi}_{H^{3}_\sigma}+\norm{\psi}_{H^{3}_\sigma}\right).
 \ee
  Stringing these inequalities together and taking $\sup$ yields the result.
\end{proof}
\begin{lemma}\label{Third Term} Let $U$ satisfy \eqref{The efffective equation} with sufficiently smooth initial conditions given by \eqref{Initial Conditions}. There exists a constant $C_\omega$ a.s. s.t.
 \be 
 \sup_{|t| \leq \ep^{-1}T}\ep\norm{\delta_i(\chi)\delta_i^-(U_{\tau\tau})}_{\ell^2} \leq \ep\log(R_\ep) C_{\omega}\sqrt{T+1}\left( \norm{\nabla\phi}_{H_w^4}+\norm{\psi}_{H_w^4}\right) 
 \ee
\end{lemma}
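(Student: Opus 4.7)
The aim is to control $\ep\norm{\delta_i(\chi_{R_\ep})\,\delta_i^-(U_{\tau\tau})(\ep\cdot,\ep t)}_{\ell^2}$ uniformly for $|t|\leq \ep^{-1}T_0$. The strategy is to bound the two factors separately, using the almost sure estimate \eqref{Main Prob Thm 2} to control $\delta_i(\chi_{R_\ep})$ and using discrete-to-continuous calculus together with the energy estimates of Section \ref{the effecive wave} to control $\delta_i^-(U_{\tau\tau})$, then recombine them via the pointwise bound $|fg|^2\leq f^2 g^2$.

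First, \eqref{Main Prob Thm 2} furnishes $|\delta_i(\chi_{R_\ep})(\bj)|\leq C_\omega(\log|\bj|+\log R_\ep)$ almost surely. I then reinterpret the lattice-scale logarithm $\log|\bj|$ through the macroscopic weight $w$ of Subsection \ref{Weighted Energy section}. Writing $|\bj|=\ep^{-1}|\ep\bj|$ and using $w(\X)\geq \log(|\X|+1)$ (which follows from \eqref{Assumption 1 on Weight}) together with $R_\ep\geq \ep^{-1}$, one obtains the pointwise inequality $\log|\bj|\leq C(\log R_\ep+w(\ep\bj))$. Squaring and summing then yields
$$\norm{\delta_i(\chi_{R_\ep})\,\delta_i^-(U_{\tau\tau})}_{\ell^2}^2 \leq CC_\omega^2\Bigl[\log^2(R_\ep)\norm{\delta_i^-(U_{\tau\tau})}_{\ell^2}^2+\norm{w(\ep\cdot)\,\delta_i^-(U_{\tau\tau})}_{\ell^2}^2\Bigr].$$

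For each remaining $\ell^2$ norm I pass from the discrete difference to a continuous derivative by the integral representation $\delta_i^-(U_{\tau\tau})(\bj)=\int_0^\ep \partial_{X_i}U_{\tau\tau}(\ep\bj-s\e_i,\ep t)\,ds$ and Cauchy-Schwarz in $s$, then compare the resulting sum over $\bj$ with an $L^2$ integral via Corollary \ref{Obvious Corollary} (as in the proof of Lemma \ref{1st term}). The powers of $\ep$ cancel, leaving an $H^2$-type norm of $\partial_{X_i}U_{\tau\tau}$, possibly multiplied by $w$. For the unweighted piece, \eqref{Basic Energy Bound 1} bounds this by $\norm{\nabla\phi}_{H^4}+\norm{\psi}_{H^4}$, producing the $\log R_\ep$ contribution. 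For the weighted piece, the bounded derivatives of $w$ from \eqref{Assumptions 2 on Wieght} let me push $w$ inside via the Leibniz rule to obtain $\norm{w\cdot U_{\tau\tau}}_{H^3}$, and the weighted energy inequality \eqref{Bound on Weighted Energy} with $i=2$, $k=3$ controls this by $C\sqrt{|\tau|}(\norm{\nabla\phi}_{H^4_w}+\norm{\psi}_{H^4_w})\leq C\sqrt{T_0}(\norm{\nabla\phi}_{H^4_w}+\norm{\psi}_{H^4_w})$. Taking square roots, combining the two contributions, and multiplying by the prefactor $\ep$ gives the stated bound.

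The main obstacle is the regularity bookkeeping: the two conversions---from a discrete $\ell^2$ sum at spacing $\ep$ to a continuous $L^2$ integral in two dimensions (which relies on a Sobolev embedding $H^s\hookrightarrow L^\infty$ with $s>1$), and moving the weight $w$ across two spatial derivatives via Leibniz---each cost derivatives, and their combined effect is what promotes the right-hand side to the $H^4_w$ norm of the initial data rather than the $H^2_w$ that a direct reading of \eqref{Bound on Weighted Energy} would suggest.
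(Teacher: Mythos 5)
Your proposal is correct and follows essentially the same route as the paper: the almost sure bound \eqref{Main Prob Thm 2} on $\delta_i(\chi_{R_\ep})$, absorption of the lattice logarithm into $\log(R_\ep)$ and the weight $w(\ep\cdot)$, a Riemann-sum comparison to pass to $L^2$, and the weighted energy bound \eqref{Bound on Weighted Energy} with the same index count landing on $H^4_w$. The only (harmless) divergence is in how the factor $\delta_i^-(U_{\tau\tau})$ is handled: you apply the fundamental theorem of calculus and Cauchy--Schwarz directly to $U_{\tau\tau}$ at time $\tau$, whereas the paper first observes that $\delta^-_{\ep i}U$ itself solves the wave equation with differenced initial data and then gains the factor of $\ep$ from Lemma \ref{FT} applied to $\delta^-_{\ep i}\phi$, $\delta^-_{\ep i}\psi$; the $\ep$ bookkeeping cancels identically either way.
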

\begin{proof}
Recall the bound given by \eqref{Main Prob Thm 2}. We therefore have that 
\be \label{Unnameable 5}
\norm{\delta_i(\chi_{R_\ep})\delta_i^-(U_{\tau\tau})}_{\ell^2} \leq C_\omega\left(\norm{\log^+(\cdot)\delta_i^{-}\left(U_{\tau\tau}\right)}_{\ell^2} +\log(R_{\ep})\norm{\delta_i^-(U_{\tau\tau})}_{\ell^2} \right).
\ee

Recalling the definition of $w$ in \eqref{Assumption 1 on Weight}
\be 
\log^+(\cdot ) \leq \log(\ep^{-1})\log( \ep \cdot+1) \leq   \log(R_\ep)w_\ep( \cdot  ).
\ee
where 
\be 
w_{\ep}(\cdot) \coloneqq w(\ep\cdot).
\ee
Then
\be 
\norm{\delta_i^-(U_{\tau\tau})}_{\ell^2} \leq \norm{w_\ep\delta_i^-(U_{\tau\tau})}_{\ell^2} \ \text{and} \
 \norm{\log^+(\cdot)\delta_i^{-}\left(U_{\tau\tau}\right)}_{\ell^2} \leq \log(R_\ep)\norm{w_\ep\delta_i^-(U_{\tau\tau})}_{\ell^2}.
\ee
Applying Corollary \ref{Riemann Sum} we have 
\be 
\norm{w_\ep\delta_i^{-}\left(U_{\tau\tau}\right)}_{\ell^2} \leq \ep^{-1}\norm{w(\cdot)\left(U_{\tau\tau}(\cdot) -U_{\tau\tau}(\cdot -\ep \e_i)\right)}_{H^2}.
\ee
Note that $\delta_{\ep i}^-(U)(\cdot, \tau)\coloneqq U(\cdot,\tau)-U(\cdot-\ep \e_i,\tau)$ solves the same wave equation $U$ does with initial conditions 
\be 
\delta_{\ep i}^-U(\X,0)=\delta_{\ep i}^-(\phi)(\X) \ \text{and} \ \delta_{\ep i}^-U_{\tau}(\X,0)=\delta_{\ep i}^-(\phi)(\X).
\ee
Hence we can apply \eqref{Bound on Weighted Energy}
\be \norm{w(\cdot)\left(U_{\tau\tau}(\cdot) -U_{\tau\tau}(\cdot -\ep \e_i)\right)}_{H^2} \leq C\sqrt{|\tau|+1}\left(\norm{\delta^-_{\ep i}(\nabla\phi)}_{H^3_w}+\norm{\delta_{\ep i}^-(\psi)}_{H^3_w}\right).\ee  By Lemma \ref{FT} 
\be 
\norm{w(\cdot)\left(U_{\tau\tau}(\cdot) -U_{\tau\tau}(\cdot -\ep \e_i)\right)}_{H^2} \leq \ep C\sqrt{|\tau|+1}\left(\norm{\nabla\phi}_{H^4_w}+\norm{\psi}_{H^4_w}\right).
\ee 
Stringing the inequalities together and taking $\sup$ gives us the result.
\end{proof}
\begin{lemma}\label{Fourth term}
Let $U$ satisfy \eqref{The efffective equation} with sufficiently smooth initial conditions given by \eqref{Initial Conditions}. There exists a constant $C_\omega$ a.s. s.t.
\be 
\sup_{|t| \leq \ep^{-1}T}\ep \norm{S_i^+(\chi_{R_\ep})\Delta_i(U_{\tau\tau})}_{\ell^2} \leq \ep^{2}R_\ep\log(R_\ep)^{\frac{3}{2}} C_\omega \sqrt{T+1}\left( \norm{\nabla\phi}_{H_w^5}+\norm{\psi}_{H_w^5}\right)
\ee
\end{lemma}
\begin{proof}
The proof is essentially the same as the proof of the previous theorem but now we use \eqref{Main Prob Thm 1} instead of \eqref{Main Prob Thm 2} and Corollary \ref{FTC} instead of Lemma \ref{FT} and then \eqref{Bound on Weighted Energy}.
\end{proof}

\begin{lemma}\label{5th term}
Let $U$ satisfy \eqref{The efffective equation} with sufficiently smooth initial conditions given by \eqref{Initial Conditions}. There exists a constant $C_\omega$ a.s. s.t. 
\be 
\sup_{|t| \leq \ep^{-1}T}\ep^3\norm{m\chi_{R_\ep}U_{\tau\tau\tau\tau}}_{\ell^2} \leq \ep^{2}R_\ep\log(R_\ep)^{\frac{3}{2}} C_\omega \sqrt{T+1}\left( \norm{\nabla \phi}_{H_w^5}+\norm{\psi}_{H_w^5}\right)
\ee
\end{lemma}
\begin{proof}
Since $m(\bj) \in [a,b]$, there exists a constant depending on $a$, $b$ or $\bar{m}$ s.t. 
\be \norm{m\chi_{R_\ep}U_{\tau\tau\tau\tau}}_{\ell^2} \leq C \norm{\chi_{R_\ep}U_{\tau\tau\tau\tau}}_{\ell^2}.\ee The steps are now very similar to those found in Lemma \ref{Third Term} but with \eqref{Main Prob Thm 0} instead of \eqref{Main Prob Thm 2}. We then use Lemma \ref{Riemann Sum} and \eqref{Bound on Weighted Energy}.
 \end{proof}
 
\section{Residual Bound and Discussion}
\label{Main Estimate Section}
\subsection{Proof of Proposition \ref{Res bound prop}}
\begin{proof}
Recall the calculation for $\Res\tilde{u}$ in \eqref{Final Form of Res}. We have bounded each of the  terms, in order, using Lemmas \ref{term one}, \ref{Second Term}, \ref{Third Term}, \ref{Fourth term} and  \ref{5th term}. We obtain Proposition \eqref{Res bound prop} by using the largest parts of each of the bounds.
\end{proof}
Lemma \ref{term one} bounds a completely deterministic term that would appear no matter how the masses are chosen. Lemma \ref{Second Term} bounds a term that arises due to our use of a cut-off function. Recall we need to use a cut-off in order to work around solving \eqref{Chi Easy}. In the case where this can be solved, say for example where the masses vary periodically, then this term would not appear. The norm, $H^3_\sigma$, in this bound is the largest norm. As $\sigma$ becomes smaller, this norm becomes larger. The term in  Lemma \ref{Third Term} is the first term that must be bounded using probabilistic arguments. The constant $C_\omega$ exists almost surely but depends on the actual realization of masses. It therefore could be arbitrarily large, since there is always a small probability that \eqref{Borel-Cantelli} holds only for $n$ extremely large. Thus $C_\omega$ may be worthy of statistical quantification in a follow up work. Recalling the definition of $R_\ep$ in \eqref{The Radius}, the bound for the term in Lemma \eqref{Fourth term} is the dominant one in $\ep$. It is $O(\ep^{1-\sigma}\log(\ep^{-1-\sigma})^{\frac{3}{2}})$. This term also requires the most smoothness and decay of the initial conditions. The final term, bounded in Lemma \eqref{5th term}, is also $O(\ep^{1 -\sigma}\log(\ep^{-1-\sigma})^{\frac{3}{2}})$ for similar reasons. It may be conjectured that $\sigma$ here is artificial as it arises from our inability to solve \eqref{Chi Easy}. We could analyse more; for example, we could find the dependence of $T, \bar{m}, a $ or $b$, but what we are most interested in is tracking $\ep$. 

\subsection{Other Masses}\label{Other Masses}
The method we have employed is robust enough to consider other ways of realizing the masses. For example, we may consider periodic masses by which we mean there exists a positive integer $k$ s.t. $m(j_1,j_2)=m(j_1+k,j_2+k)$. Then $\chi$ is periodic and bounded, so the analysis of the terms appearing in the residual becomes simpler. For instance, we no longer need the term with the cutoff. In this case, one of the largest terms in $\ep$ is the one given in Lemma \ref{Third Term}. A quick count shows that an $\ep$ is lost from converting  the $\ell^2$ norm to an $H$ norm, but an $\ep$ is picked up on account of the finite difference. Thus the term is $O(\ep)$, which produces roughly the same size residual in $\ep$ as we obtained for the i.i.d. masses. The only difference is that for the i.i.d. case, the residual is slightly larger due to the logarithms and the use of the cutoff. 

We also may consider masses which are all identical. In such a case, the only term which appears in the residual is the one bounded in Lemma \ref{1st term}. This improves the accuracy of approximate solution substantially as the residual would be $O(\ep^2)$. 

Another generalization we can make is that masses need not be identically distributed, so long as they all fall into some interval $[a,b] \in \R^+$ and have the same expected value. Since our methods did not use any other features of the masses being identically distributed, e.g. equal variances or probabilities, our result extends to this case without modification. 

Another important example is layered media. Suppose that 
\be  \label{Indepence}
\{m(j_1,j_2)\}_{j_1=-\infty}^{\infty}.
\ee
is random i.i.d. sequence of masses and that for all $j_1$
\be \label{all equal}
\cdots =m(j_1,-1)= m(j_1,0)=m(j_1,1)=\cdots 
\ee
In this case, it is actually possible to solve \eqref{Chi Easy} and thereby not use a cutoff, but we proceed using the same tools we have developed, since such tools can be utilized in higher dimensions. In this case, Hoeffding's Inequality does not immediately hold since we do not have complete independence. Reconsider \eqref{Reconsider}
\be \begin{aligned}
\mathcal{L}(\chi_{R_\ep})(j_1,j_2)&=\sum_{|k_1|\leq r}\sum_{|k_2|\leq r}\mathcal{L}(\varphi)(j_1-k_1,j_2-k_2)z(k_1,k_2)\\&=\sum_{k_1=j_1-r}^{j_1+r}\sum_{k_2=j_2-r}^{j_2+r}\mathcal{L}(\varphi)(k_1,k_2)z(j_1-k_1,j_2-k_2).
\end{aligned}
\ee
Let $z(k_1)\coloneqq z(k_1,\cdot).$ This is well defined because of \eqref{all equal}. Thus 
\be 
\mathcal{L}(\chi_{r})(j_1,j_2) =\sum_{k_1=j_1-r}^{j_1+r}z(j_1-k_1)\sum_{k_2=j_2-r}^{j_2+r}\mathcal{L}(\varphi)(k_1,k_2).
\ee
Let 
\be 
\mathcal{L}(\varPhi_r)(j_2,k_1)\coloneqq \sum_{k_2=j_2-r}^{j_2+r}\mathcal{L}(\varphi)(k_1,k_2)
\ee
so 
\be 
\mathcal{L}(\chi_{R_\ep})(j_1,j_2)=\sum_{k_1=j_1-r}^{j_1+r}z(j_1-k_1)\mathcal{L}(\varPhi_r)(j_2,k_1).
\ee
This is a sum involving independent mean zero random variables which are contained in some interval, so we may apply Hoeffding's Inequality. Let 
\be \label{Weird Norm}
\norm{\mathcal{L}(\varPhi_r)(j_1,j_2)}^2\coloneqq \sum_{k_1=j_1-r}^{j_1+r}\mathcal{L}(\varPhi_r)(j_2,k_1)^2.
\ee
Then we have by Hoeffding's Inequality that 
\be
P\left(|\mathcal{L}(\chi_r)(j_1,j_2)| \geq t\right) \leq 2P\left(\frac{-2t^2}{(a-b)^2\norm{\mathcal{L}(\varPhi_r)(j_1,j_2)}^2}\right).
\ee
Now the same argument works as is made in Section \ref{Estimates on the Solution} but the relevant quantity to calculate is the square root of \eqref{Weird Norm}. Take $\mathcal{L}$ to be any of the operators in Section \ref{Estimates on the Solution}. Then 
\be 
\norm{\mathcal{L}(\varPhi_r)(j_1,j_2)}^2=\sum_{k_1=j_1-r}^{j_1+r}\left(\sum_{k_2=j_2-r}^{j_2+r}\mathcal{L}(\varphi)(k_1,k_2)\right)^2.
\ee
It is possible to use Jensen's inequality to obtain 
\be 
\norm{\mathcal{L}(\varPhi_r)(j_1,j_2)}^2\leq(2r+1)\sum_{k_1=j_1-r}^{j_1+r}\sum_{k_2=j_2-r}^{j_2+r}\mathcal{L}(\varphi)(k_1,k_2)^2=(2r+1)\norm{\mathcal{L}\varphi}^2_{D(\bj,r)}.
\ee
One notices that $\norm{\mathcal{L}\varphi}^2_{D(\bj,r)}$ are norms we have already computed in Section \ref{Estimates on the Solution}, see \eqref{Norm of phi} for example. The $r$ out front provides an extra half power of $\ep^{-1-\sigma}$ when one considers \eqref{The Radius} and after taking square roots as in \eqref{Almost Sure Chi}. Therefore, in contrast with \eqref{Res Bound}, we have
\be\label{half power worse} 
\sup_{|t| \leq \ep^{-1}T}\norm{\Res \tilde{u}}_{\ell^2} \leq \ep^{\frac{1}{2}-\frac{3}{2}\sigma}C_\omega\log(\ep^{-1-\sigma})^{\frac{3}{2}}\norm{\phi,\psi}_{H^{5}_\sigma}.
\ee
This example sheds some light on the complexity introduced when considering random masses. In contrast, for periodic masses, even if \eqref{all equal} holds, then \eqref{Chi Easy} is solvable and the solution is periodic so most importantly bounded. Therefore the residual is always $O(\ep).$

\subsection{Main Estimate Result} 
\begin{theorem}\label{Main Estimate}
Suppose $\psi, \phi \in H_\sigma^{5}$. Let $u$ satisfy \eqref{Summation by Parts} with 
\be 
u(\bj ,0)=\ep^{-1}\phi(\ep \bj) \ \text{and} \
\dot{u}(\bj,0)=\psi(\ep \bj).
\ee
and let $U$ satisfy \eqref{The efffective equation} with 
\be 
U(\X,0)=\phi(\X) \ \text{and} \ \partial_{\tau} U(\X,0)= \psi(\X).
\ee
With
\be 
\hat{u}(\bj,t)=\ep^{-1}U(\ep \bj,\ep t)
\ee 
where $\chi_{R_\ep}$ is given by \eqref{Chi restricted} and $R_\ep$ by \eqref{The Radius}.  Then there exists a $C_\omega(a,b,\bar{m},T)$ a.s. s.t.
\be \label{Main Bound 1} 
\sup_{|t| \leq \ep^{-1}T}\norm{u-\hat{u}}_{\ell^2}\leq \ep^{-1-\sigma}C_\omega \log(\ep^{-1-\sigma})^{\frac{3}{2}}\norm{\phi,\psi}_{H^5_\sigma}. 
\ee
and 
\be \label{Main Bound 2}
\sup_{|t| \leq \ep^{-1}T}\norm{\dot{u}-\dot{\hat{u}}}_{\ell^2}\leq \ep^{-\sigma}C_\omega \log(\ep^{-1-\sigma})^{\frac{3}{2}}\norm{\phi,\psi}_{H^5_\sigma}. 
\ee
\end{theorem}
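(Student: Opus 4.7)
The plan is to decompose the full error by adding and subtracting the approximate solution $\tilde{u}$ from \eqref{Approximate Solution 2}. Since $\hat{u}(\bj,t)=\ep^{-1}U(\ep\bj,\ep t)$ is just the leading part of $\tilde u$, we have $\tilde u - \hat u = \ep\,\chi_{R_\ep}(\bj)U_{\tau\tau}(\ep\bj,\ep t)$ and, after one time derivative, $\dot{\tilde u}-\dot{\hat u}=\ep^{2}\chi_{R_\ep}(\bj)U_{\tau\tau\tau}(\ep\bj,\ep t)$. Hence the triangle inequality splits each bound into (i) the approximation error $u-\tilde u$, controlled by the energy argument plus the residual bound of Proposition \ref{Res bound prop}, and (ii) the ``extra correction term'' from the ansatz, which we control directly using the almost sure estimates on $\chi_{R_\ep}$ from Section \ref{Probalistic Estimates} together with the (weighted) energy bounds of Section \ref{the effecive wave}.

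For (i), the energy estimate \eqref{Inequality for u} gives $\sup_{|t|\leq\ep^{-1}T}\norm{u-\tilde u}_{\ell^2}\leq \ep^{-2}C\sup\norm{\Res\tilde u}_{\ell^2}$, provided the initial disparity hypothesis \eqref{Assumption on Disparity of Initial Conditions} is met. Because $\hat u$ and $u$ share initial data by construction, the disparity is exactly $\xi(\cdot,0)=\ep\chi_{R_\ep}U_{\tau\tau}|_{\tau=0}$ and $\dot\xi(\cdot,0)=\ep^{2}\chi_{R_\ep}U_{\tau\tau\tau}|_{\tau=0}$, and I would check \eqref{Assumption on Disparity of Initial Conditions} using the very same probabilistic bounds that appear in Lemmas \ref{Third Term}--\ref{5th term} (in particular \eqref{Main Prob Thm 1} and \eqref{Main Prob Thm 2} combined with a product rule in $\delta_i^{+}$). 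Feeding the residual bound from Proposition \ref{Res bound prop} into \eqref{Inequality for u} produces $\ep^{-2}\cdot\ep^{1-\sigma}\log(\ep^{-1-\sigma})^{3/2}=\ep^{-1-\sigma}\log(\ep^{-1-\sigma})^{3/2}$, which is exactly the order asserted in \eqref{Main Bound 1}.

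For (ii), I use \eqref{Main Prob Thm 0} to write
\begin{equation*}
|\chi_{R_\ep}(\bj)| \leq C_\omega R_\ep\bigl(w(\ep\bj)+\log(R_\ep)^{3/2}\bigr),
\end{equation*}
where the $\log(|\bj|)^{3/2}$ factor is absorbed into the weight $w$ of \eqref{Assumption 1 on Weight} at the cost of an extra $\log(\ep^{-1})^{3/2}$ term. Squaring, summing in $\bj$, and converting the $\ell^2$-sum over $\ep\bj$ to an $H^2$-integral via Corollary \ref{Riemann Sum} yields
\begin{equation*}
\ep\,\norm{\chi_{R_\ep}\,U_{\tau\tau}(\ep\cdot,\ep t)}_{\ell^2}\leq R_\ep\,C_\omega\bigl(\norm{wU_{\tau\tau}}_{H^2}+\log(R_\ep)^{3/2}\norm{U_{\tau\tau}}_{H^2}\bigr).
\end{equation*}
The weighted energy inequality \eqref{Bound on Weighted Energy} with $i=2,\ k=2$ and the ordinary energy bound \eqref{Basic Energy Bound 1} estimate the right side by $(\sqrt T+\log(R_\ep)^{3/2})\,\bigl(\norm{\nabla\phi}_{H^3_w}+\norm{\psi}_{H^3_w}\bigr)$. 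Since $w$ grows only polylogarithmically while the $\sigma$-weight in $H^5_\sigma$ grows polynomially, we have $\norm{\cdot}_{H^3_w}\lesssim\norm{\cdot}_{H^5_\sigma}$. Because $R_\ep\sim \ep^{-1-\sigma}$, this contributes $\ep^{-1-\sigma}\log(\ep^{-1-\sigma})^{3/2}$ and therefore absorbs into the right side of \eqref{Main Bound 1}.

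The velocity bound \eqref{Main Bound 2} follows by the same two-step decomposition: the term $\norm{\dot u-\dot{\tilde u}}_{\ell^2}$ is handled by \eqref{Inequality for Microstate} together with Proposition \ref{Res bound prop}, losing a factor of $\ep$ relative to the displacement and giving the order $\ep^{-\sigma}\log(\ep^{-1-\sigma})^{3/2}$; the correction term $\ep^{2}\chi_{R_\ep}U_{\tau\tau\tau}$ is handled by exactly the same weighted-energy manipulation as above but with $U_{\tau\tau\tau}$ in place of $U_{\tau\tau}$, which is what forces the $H^5$ regularity of the initial data. I expect the main obstacle to be the careful bookkeeping: verifying the initial-disparity hypothesis \eqref{Assumption on Disparity of Initial Conditions} from the same probabilistic tools, and confirming that the $R_\ep$-gain, the weighted energy growth in $\sqrt{T}$, and the $\log(R_\ep)^{3/2}$ factors combine to the single clean factor $\log(\ep^{-1-\sigma})^{3/2}$ on the right-hand side of \eqref{Main Bound 1}--\eqref{Main Bound 2}.
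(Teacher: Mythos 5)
Your proposal is correct and follows essentially the same route as the paper: decompose via the full ansatz $\tilde u$, feed Proposition \ref{Res bound prop} into the energy estimates \eqref{Inequality for Microstate} and \eqref{Inequality for u} after verifying the initial disparity \eqref{Assumption on Disparity of Initial Conditions}, and bound the corrector difference $\tilde u-\hat u=\ep\chi_{R_\ep}U_{\tau\tau}$ directly with the sub-Gaussian estimates and weighted energy, exactly as in the paper's argument via Lemmas \ref{Third Term} and \ref{Fourth term}. The only cosmetic difference is that the paper attributes the $H^5$ regularity requirement to the residual terms in Lemmas \ref{Fourth term} and \ref{5th term} rather than to the corrector difference, but this does not affect the proof.
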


\begin{remark} \label{Relative Error}
Although the right hand side of \eqref{Main Bound 1} and \eqref{Main Bound 2} appear to grow, the size of $u$ and $\dot{u}$ is roughly $\ep^{-2}$ and $\ep^{-1}$, so the error is relatively small. 
\end{remark}
\begin{proof}
Let 
\be 
\tilde{u}(\bj,t)=\ep^{-1}U(\ep \bj, \ep t)+\ep\chi_{R_\ep}(\bj)U_{\tau\tau}(\ep \bj, \ep t). 
\ee
This is what has typically been our $\tilde{u}$ as defined in \eqref{Approximate Solution 2}. Since we have taken the initial conditions for $u$ and $\hat{u}$ to be identical, \eqref{Assumption on Disparity of Initial Conditions} holds as long as we have
\be 
\ep\norm{\ep \chi_{R_\ep}(\cdot)U_{\tau\tau\tau}(\cdot,0),\delta_1^+\left(\chi_{R_\ep}U_{\tau\tau}\right)(\cdot,0),\delta_2^+\left(\chi_{R_\ep}U_{\tau\tau}\right)(\cdot,0)} \leq \ep^{-1}C\sup_{|t| \leq \ep^{-1}T}\norm{\Res{\tilde{u}}}_{\ell^2}
\ee
which can be proven using the same kind of arguments given in Section \ref{Bounding the Residual}.

Then we have by \eqref{Inequality for Microstate} and \eqref{Inequality for u} that for a constant depending on $a,b$, and $T$ s.t.
\be 
\sup_{|t| \leq \ep^{-1}T}\norm{\dot{u}-\dot{\tilde{u}}}_{\ell^2} \leq \ep^{-1}C\sup_{|t| \leq \ep^{-1}T}\norm{\Res{\tilde{u}}}_{\ell^2}
\ee
and 
\be
\sup_{|t| \leq \ep^{-1}T}\norm{u-\tilde{u}}_{\ell^2} \leq \ep^{-2}C\sup_{|t| \leq \ep^{-1}T}\norm{\Res{\tilde{u}}}_{\ell^2}.
\ee
Now we use \eqref{Res Bound} to obtain the correct right hand side in \eqref{Main Bound 1} and \eqref{Main Bound 2} but for $\tilde{u}$ instead of $\hat{u}.$ Thus it remains to analyse the difference between the two which is
\be 
\sup_{|t| \leq \ep^{-1}T}\norm{\dot{\hat{u}}-\dot{\tilde{u}}}_{\ell^2} \leq \sup_{|t| \leq \ep^{-1}T}\norm{\ep^2\chi_{R_\ep}(\cdot)U_{\tau \tau\tau}(\ep \codt)}_{\ell^2}
\ee
and 
\be 
\sup_{|t| \leq \ep^{-1}T}\norm{\hat{u}-\tilde{u}}_{\ell^2} \leq \sup_{|t| \leq \ep^{-1}T}\norm{\ep\chi_{R_\ep}(\cdot)U_{\tau \tau}(\ep \codt)}_{\ell^2}.
\ee
Both of these can be bounded using steps similar to those in Lemmas \ref{Third Term} and \ref{Fourth term}. One finds
\be 
\sup_{|t| \leq \ep^{-1}T}\norm{\dot{\hat{u}}-\dot{\tilde{u}}}_{\ell^2} \leq \ep C_{\omega}R_\ep\left((\sqrt{T}+\log(R_\ep)^{\frac{3}{2}})( \norm{\phi}_{H_w^3}+\ep\norm{\psi}_{H_w^2})\right)
\ee
\be 
\sup_{|t| \leq \ep^{-1}T}\norm{\hat{u}-\tilde{u}}_{\ell^2} \leq C_{\omega}R_\ep\left((\sqrt{T}+\log(R_\ep)^{\frac{3}{2}})( \norm{\phi}_{H_w^2}+\ep\norm{\psi}_{H_w^1})\right).
\ee
Recalling the definition of $R_\ep$ in \eqref{The Radius}, these bounds have the correct power of $\ep$. Thus, with the appropriate constant $C_\omega$, everything can be dominated by the right hand side in \eqref{Main Bound 1} and \eqref{Main Bound 2}.
\end{proof}
\section{Coarse Graining}\label{Coarse Graining}

Theorem \ref{Main Estimate} says that the macroscopic behavior of the system is wavelike i.e. it evolves according to an effective wave equation. We formalize this notion by proving a convergence result in the macroscopic setting. We have a number of operators to introduce. The lattice Fourier transform, $F:\ell(\Z^2) \to L^2(\R^2)$, is given by
\be \label{Discrete FT}
F[f](\y)=\frac{1}{(2\pi)^2}\sum_{\bj \in \Z^2}\exp\left(-i\bj\cdot\y \right)f(\bj).
\ee
Here $y \in \R^2$. Its inverse is
\be \label{Discrete FT Inverse}
F^{-1}[g](\bj)=\int_{[-\pi,\pi]^2}\exp\left(i\y \cdot \bj \right)g(\y)d\y. \ee Let $\mathcal{F}:
L^2(\R^2) \to L^2(\R^2)$ be the typical Fourier transform and $\mathcal{F}^{-1}$ be its inverse. The sampling operator is 
\be \label{Sampling}
\mathcal{S}(u)(\bj)=u( \bj).
\ee
A cutoff operator is 
\be 
\theta_{\phi}(\y)=\begin{cases}
1 & \y \in [-\phi, \phi]^2 \\ 
0 & \text{else}
\end{cases}.
\ee
Finally, we define a low pass interpolator. For a continuous variable $\x \in \R^2$
\be \label{low pass}
\mathcal{L}[f](\x)=\mathcal{F}^{-1}[\theta_{\pi}(\cdot)F[f](\cdot)](\x).
\ee
 The following theorem says that the abstract diagram found in \cite{Mielke} holds in the setting of i.i.d. masses almost surely. The diagram depicts how time evolution commutes with the coarse-graining operator $\mathcal{L}$ in the limit as $\ep \to 0$, meaning that one can first evolve the system according to the \eqref{Main2}, and then apply $\mathcal{L}$, or one can apply $\mathcal{L}$ initially to obtain macroscopic initial conditions and then evolve those according the effective wave equation and arrive at the same result.
 \subsection{Main Convergence Result} \label{Main Convergence Result}
 \begin{theorem}\label{Main Coarse Graining}
 Let $U$ solve \eqref{The efffective equation} with initial conditions given by \eqref{Macroscopic Initial Conditions}. 
 Let $u$ solve \eqref{Main2} with initial conditions given by \eqref{Initial Conditions}.
 Put 
 \be
 U_\ep(\X,\tau)=\ep\mathcal{L}(u(\cdot,\tau/\ep))(\X/\ep) 
 \ee
 so that 
 \be 
 \partial_{\tau}U_\ep(\X,\tau)=\mathcal{L}(\dot{u}(\cdot,\tau/\ep))(\X/\ep). \ee 
 Then 
 \be \lim_{\ep \to 0}
 \sup_{|\tau| \leq T}\norm{U_\ep-U,\partial_{\tau} U_\ep-\partial_{\tau}U}_{L^2} =0  
 \ee
almost surely.
 \end{theorem}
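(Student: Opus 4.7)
The plan is to split the error by inserting a ``sampled macroscopic'' intermediate between $U_\ep$ and $U$. Define $\hat{u}(\bj,t) \coloneqq \ep^{-1}U(\ep\bj,\ep t)$ (as in Theorem \ref{Main Estimate}) and set $\hat{U}_\ep(\X,\tau) \coloneqq \ep\,\mathcal{L}(\hat{u}(\cdot,\tau/\ep))(\X/\ep)$. The triangle inequality then gives
\begin{equation*}
\norm{U_\ep - U}_{L^2} \leq \norm{U_\ep - \hat{U}_\ep}_{L^2} + \norm{\hat{U}_\ep - U}_{L^2},
\end{equation*}
and analogously for the time derivatives $\partial_\tau U_\ep - \partial_\tau U$. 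The first summand carries the stochastic lattice-versus-sampled-continuum error that Theorem \ref{Main Estimate} already controls; the second is a purely deterministic Shannon-type sampling error for the smooth macroscopic field $U$, controlled by its Sobolev regularity.

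For the first piece, linearity of $\mathcal{L}$ gives $U_\ep - \hat{U}_\ep = \ep\,\mathcal{L}(u(\cdot,\tau/\ep) - \hat{u}(\cdot,\tau/\ep))(\X/\ep)$. Plancherel for the pair \eqref{Discrete FT}--\eqref{Discrete FT Inverse} (which makes $\mathcal{L}$ an isometry from $\ell^2(\Z^2)$ into $L^2(\R^2)$) combined with the dilation $\X \mapsto \X/\ep$ yields $\norm{U_\ep - \hat{U}_\ep}_{L^2(d\X)} = \ep^2\,\norm{u(\cdot,\tau/\ep) - \hat{u}(\cdot,\tau/\ep)}_{\ell^2}$ and, likewise, $\norm{\partial_\tau U_\ep - \partial_\tau \hat{U}_\ep}_{L^2} = \ep\,\norm{\dot{u}(\cdot,\tau/\ep) - \dot{\hat{u}}(\cdot,\tau/\ep)}_{\ell^2}$. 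Inserting \eqref{Main Bound 1} and \eqref{Main Bound 2}, both of which are already uniform on $|t|\leq T/\ep$, produces in either case
\begin{equation*}
\norm{U_\ep - \hat{U}_\ep}_{L^2} + \norm{\partial_\tau U_\ep - \partial_\tau \hat{U}_\ep}_{L^2} \leq C^*_\omega\,\ep^{1-\sigma}\log(\ep^{-1-\sigma})^{3/2}\norm{\phi,\psi}_{H^5_\sigma},
\end{equation*}
which vanishes almost surely as $\ep \to 0^+$ for any $\sigma \in (0,1)$, and the $\sup$ over $|\tau|\leq T$ comes for free.

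For the second piece I would introduce $V_\ep(\x,\tau) \coloneqq U(\ep\x,\tau)$, so that $\hat{U}_\ep(\X,\tau) = (\mathcal{L}\mathcal{S}V_\ep)(\X/\ep,\tau)$ and $U(\X,\tau) = V_\ep(\X/\ep,\tau)$. Rescaling gives $\norm{\hat{U}_\ep - U}_{L^2(d\X)} = \ep\,\norm{\mathcal{L}\mathcal{S}V_\ep - V_\ep}_{L^2}$. Using the dilation identity $\mathcal{F}[V_\ep](\xi,\tau) = \ep^{-2}\mathcal{F}[U](\xi/\ep,\tau)$ together with Poisson summation, one identifies $\mathcal{F}[\mathcal{L}\mathcal{S}V_\ep - V_\ep]$ as the truncation of $\mathcal{F}[V_\ep]$ outside $[-\pi,\pi]^2$ plus the aliasing sum over $\bk \neq \0$; both contributions are supported, in the unrescaled Fourier variable, on $|\eta|_\infty \geq \pi/\ep$, and are therefore bounded in $L^2$ by $C\ep^{2s-2}\norm{U(\cdot,\tau)}_{H^s}^2$ via a standard Sobolev tail estimate. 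Restoring the prefactor $\ep$, this gives $\norm{\hat{U}_\ep - U}_{L^2} \leq C\ep^s\norm{U(\cdot,\tau)}_{H^s}$, and since the hypothesis $\phi,\psi \in H^5_\sigma \subset H^5$ together with \eqref{Basic Energy Bound 1} makes $\sup_{|\tau|\leq T}\norm{U(\cdot,\tau)}_{H^5}$ finite, the right side with $s=5$ is $O(\ep^5)$. The derivative version is identical with $U$ replaced by $\partial_\tau U$, which solves the same wave equation with one-derivative-shifted initial data and inherits a uniform $H^4$ bound. The main obstacle I expect is the Fourier bookkeeping in the Poisson-summation step, namely keeping the $(2\pi)$ normalizations of \eqref{Discrete FT}--\eqref{low pass} consistent with those of the continuous transform while handling the aliasing sum; once that is in hand, combining the two halves, taking $\sup_{|\tau|\leq T}$, and transferring the almost-sure qualifier from $C^*_\omega$ in Theorem \ref{Main Estimate} finishes the proof.
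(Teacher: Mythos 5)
Your proposal is correct and follows essentially the same route as the paper: the same triangle-inequality splitting through the sampled macroscopic field $\mathcal{LS}(U(\ep\cdot,\tau))(\cdot/\ep)$ (your $\hat{U}_\ep$), the same Plancherel-plus-rescaling reduction of the first piece to the bounds \eqref{Main Bound 1}--\eqref{Main Bound 2} of Theorem \ref{Main Estimate}, and the same Fourier-tail sampling estimate for the second piece. The only difference is cosmetic: you prove the interpolation step inline via Poisson summation (handling aliasing explicitly), whereas the paper delegates it to Lemma \ref{Interpolation} in the appendix.
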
.
 \begin{remark}
 The rate of convergence is no worse than $\ep^{1-\sigma}\log(\ep^{-1-\sigma})^{\frac{3}{2}}$ as seen in the proof.
 \end{remark}
 \begin{proof}
 
 \be 
 \norm{U_\ep-U}_{L^2}\leq \norm{\ep\mathcal{L}(u(\cdot,\tau/\ep))(\cdot/\ep) -\mathcal{LS}(U(\ep \cdot,\tau))(\cdot/\ep)}_{L^2}+\norm{\mathcal{LS}(U(\ep \cdot,\tau))(\cdot/\ep)-U(\cdot,\tau)}_{L^2}. 
 \ee
 According to Lemma \ref{Interpolation}, the second term goes to $0$ uniformly in $\tau$ at a rate of $\ep$. From the first term, we have
 \be
 \norm{\ep\mathcal{L}(u(\cdot,\tau/\ep))(\codt/\ep) -\mathcal{LS}U(\ep\cdot,\tau)(\cdot/\ep) }_{L^2} =\ep \norm{\ep\mathcal{L}(u(\cdot,\tau/\ep))(\cdot) -\mathcal{LS}U(\ep\cdot,\tau)(\cdot) }_{L^2}.
 \ee
 By Plancherel 
 \be 
 \ep \norm{\ep\mathcal{L}(u(\cdot,\tau/\ep))(\X) -\mathcal{LS}U(\ep\cdot,\tau)(\cdot) }_{L^2} \leq \ep C \norm{\ep u(\cdot,\tau/\ep) -\mathcal{S}U(\ep\cdot,\tau) }_{\ell^2}.
 \ee
 Now we use that 
 $U(\ep \cdot, \tau)=\ep \hat{u}(\cdot, \ep t)$ as in Theorem \ref{Main Estimate} and the result follows from that same theorem. Recall $\tau=\ep t$. Similarly 
 \be 
 \ep \norm{\mathcal{L}(\dot{u}(\cdot,\tau/\ep))(\X) -\mathcal{LS}U_{\tau}(\ep\cdot,\tau)(\cdot) }_{L^2} \leq \ep \norm{ \dot{u}(\cdot,\tau/\ep) -\mathcal{S}U_{\tau}(\ep\cdot,\tau) }_{\ell^2}. 
 \ee
 Again we use Theorem \ref{Main Estimate} where $U_\tau(\ep \cdot, \tau)=\dot{\hat{u}}(\cdot ,t).$
 \end{proof}
 \section{Numerical Results}
 \label{Numerical Results}
Our numerical results focus on confirming the upper bounds found in Theorem \ref{Main Estimate}. We refer to the left hand side of \eqref{Main Bound 1} as the absolute error of the displacement (a.e.d.) and the left hand side of \eqref{Main Bound 2} as the absolute error of the velocity (a.e.v). For the next two experiments, we have chosen
\be 
\phi(X_1,X_2)=\sech\left(\frac{1}{2}(X_1-1)^2+(X_2-1)^2\right), \ \psi(X_1,X_2)= \sech\left((X_1+1)^2+\frac{1}{2}(X_2+1)^2\right)
\ee
and looked at $\ep$ over $\{1/2,1/4,1/8,1/16,1/32\}.$
Every $m(j)$ is sampled from $\{1/2,3/2\}$ and for the first experiment the masses are chosen to be i.i.d. As $\epsilon$ varies, this grid of randomly chosen masses remains fixed.
\begin{figure}
   \begin{subfigure}[t]{.49\textwidth}
    \centering
        \includegraphics[width=\linewidth, height=2.5in]{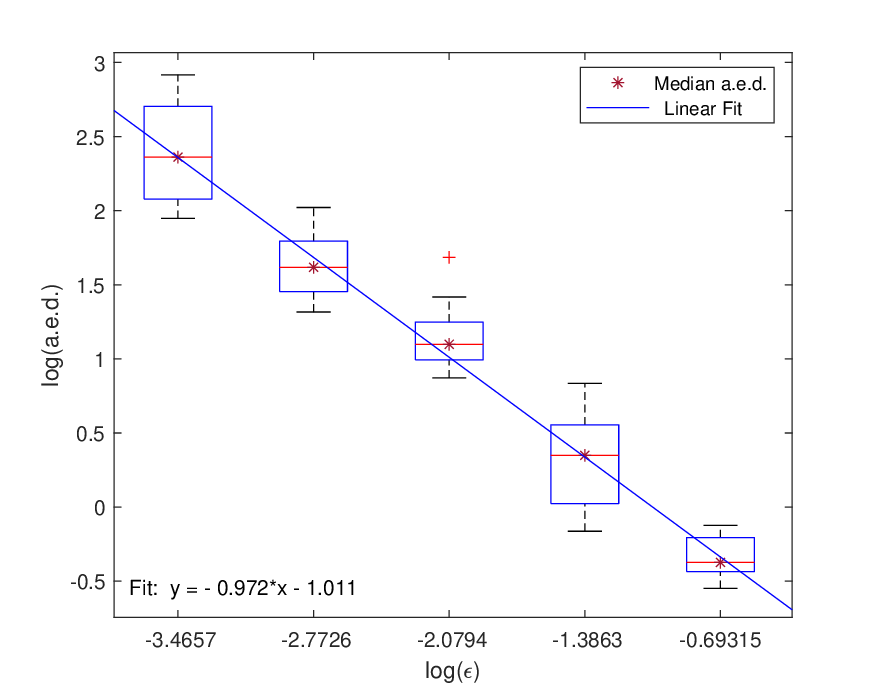}
        \end{subfigure}
        ~
        \begin{subfigure}[t]{.49\textwidth}
        \centering
        \includegraphics[width=\linewidth,height=2.5in]{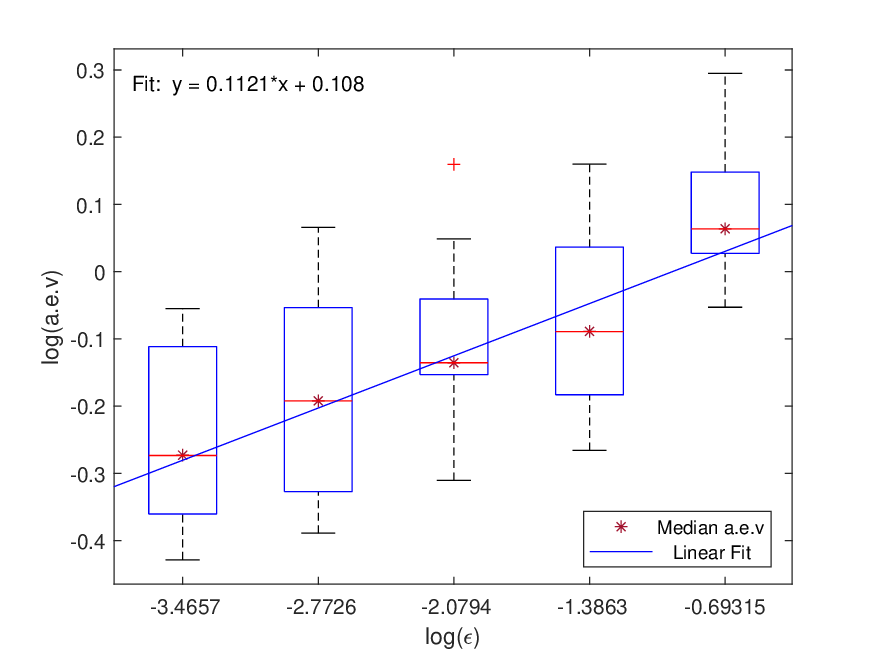}
        
        \end{subfigure}
    \caption{The left (right) panel shows a $\log$-$\log$ graph of a.e.d. (a.e.v) versus $\ep$. Since the masses are chosen randomly, ten realization of masses were tested. The distribution of the error is plotted with a box and whisker plot at each value of $\ep$. The slope of the line of best fit numerically approximates the power of $\ep$ in \eqref{Main Bound 1} (\eqref{Main Bound 2}). }
    \label{All Random Experiment}
    \end{figure}
The upper bound for the a.e.v. obtained in Theorem \ref{Main Estimate} is $\ep^{-\sigma}\log(\ep^{-1-\sigma})^{\frac{3}{2}}$ for the a.e.v. (Recall that $\sigma$, is any arbitrarily small positive number.) The slopes seen in Figure \ref{All Random Experiment} are in  agreement with the bounds obtained in the theorem, i.e. the slopes reflect to what power of $\ep$ the error depends. In fact the estimate is close to sharp. We can thus think of such bounds as giving an analytic prediction on the size of the error in many cases. 

For the second experiment, we use the setup for the masses given by \eqref{Indepence} and \eqref{all equal} i.e. the masses are layered. According to \eqref{half power worse}, we expect the slopes to be no more than a half power less than those seen in Figure \ref{All Random Experiment}. This is indeed what we see in Figure \ref{Half Random Experiment}. Again, the numerical error is close to the predicted error.
\begin{figure}
   \begin{subfigure}[t]{.49\textwidth}
    \centering
        \includegraphics[width=\linewidth, height=2.5in]{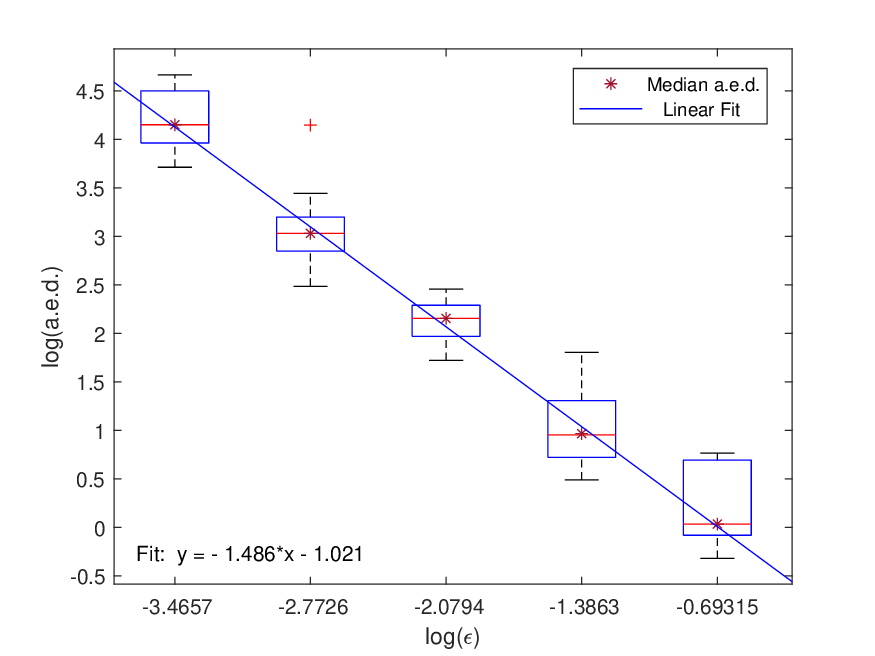}
        \end{subfigure}
        ~
        \begin{subfigure}[t]{.49\textwidth}
        \centering
        \includegraphics[width=\linewidth, height=2.5in]{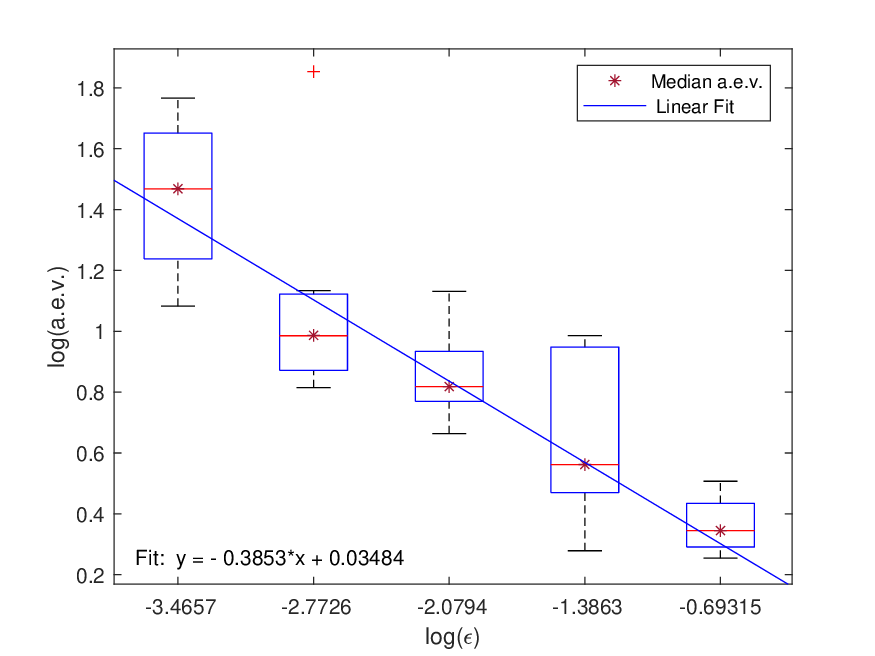}
        \end{subfigure}
    \caption{These panels reflect the same measurements as those in Figure \ref{All Random Experiment}, but now the masses have been chosen according to \eqref{Indepence} and \eqref{all equal}}
    \label{Half Random Experiment}
    \end{figure}
    
    Finally we compare these results to what happens in three deterministic cases. In one case we choose the masses to be constant in which case we would expect the slope for the a.e.v and the a.e.d. to be no worse than $1$ and 0 respectively.  In a second, the masses are periodically layered i.e. they only vary periodically with a period of 2 along one of the coordinate axis and along the other, \eqref{all equal} holds. For the third case, masses are chosen to vary periodically along both coordinate axes with period of 2.  In both cases we expect the slope of the a.e.v. and a.e.d. to be no worse than $0$ and $1$. In both cases the upper bound holds; however, unlike in all the previous experiments, numerically computed a.e.d. seems to be substantially better than the analytic upper bound, since the slope for both periodic cases is closer to $0$ than to $1$. 
    
\begin{figure}
   \begin{subfigure}[t]{.49\textwidth}
    \centering
        \includegraphics[width=\linewidth,height=2.5in]{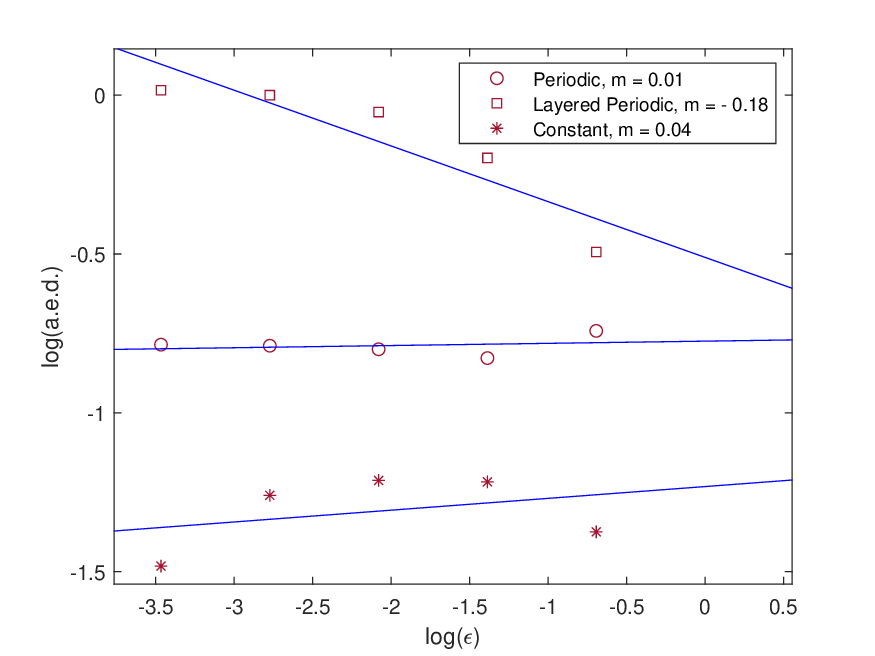}
        \end{subfigure}
        ~
        \begin{subfigure}[t]{.49\textwidth}
        \centering
        \includegraphics[width=\linewidth, height=2.5in]{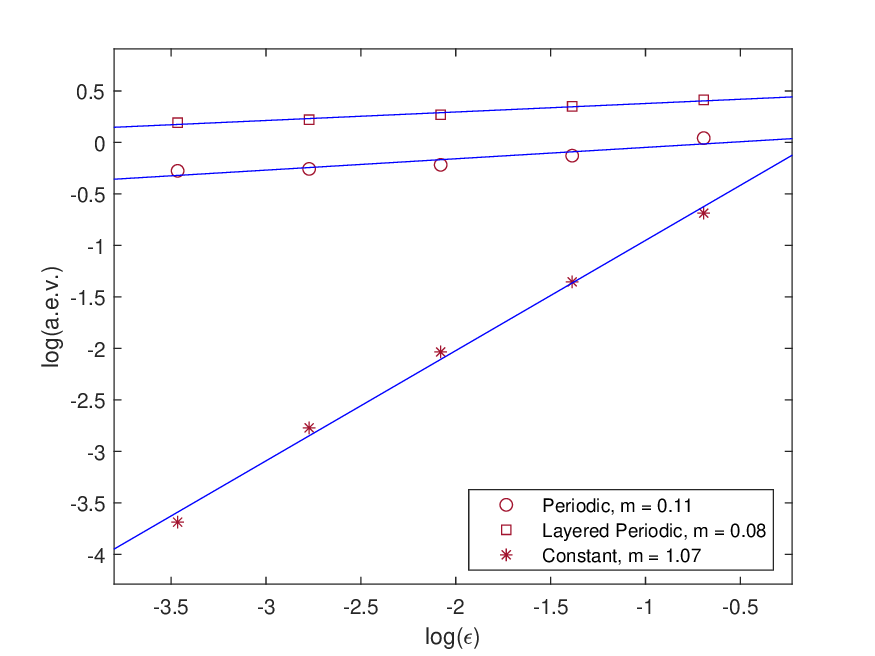}
        \end{subfigure}
    \caption{Masses have been chosen deterministically in three different ways.}
    \label{Deterministic}
    \end{figure}
    
 One important observation is that when the randomly chosen masses are layered, the approximation to the wave equation does not converge as quickly as it does when they are not layered. The main physical explanation we propose for the difference in the observed slope between Figures \ref{All Random Experiment} and \ref{Half Random Experiment} is that in the second experiment, reflections caused by the random masses manifest as long ripples transverse to the direction in which the masses are randomly changing. This is in contrast to the first experiment where reflections manifest as localized disturbances. Figure \eqref{Comparison Figure} gives some empirical evidence for this phenomenon. 
 
One possible heuristic explanation for why we don't get improvement for periodic masses is that there is always a direction in $\R^2$ along which the averages of masses in lines transverse to that direction are varying (unless the masses are all constant).This produces a kind of layering that cannot occur if all the masses are chosen to be i.i.d. since masses along any line average to the same value. Hence, in this sense, the homogenization is more uniform.

\begin{figure}
   \begin{subfigure}[t]{.49\textwidth}
    \centering
        \includegraphics[width=\linewidth]{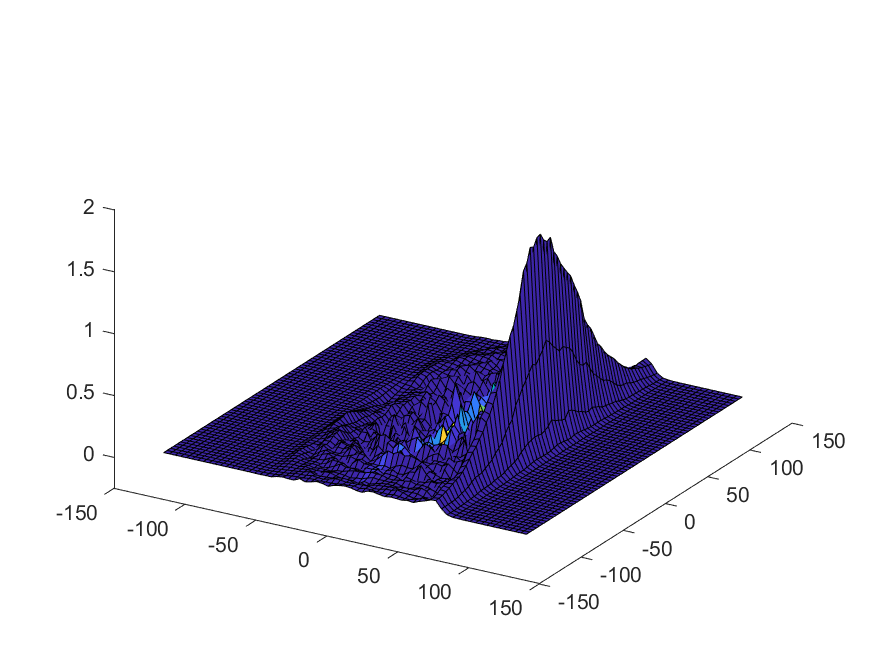}
        \end{subfigure}
        ~
        \begin{subfigure}[t]{.49\textwidth}
        \centering
        \includegraphics[width=\linewidth]{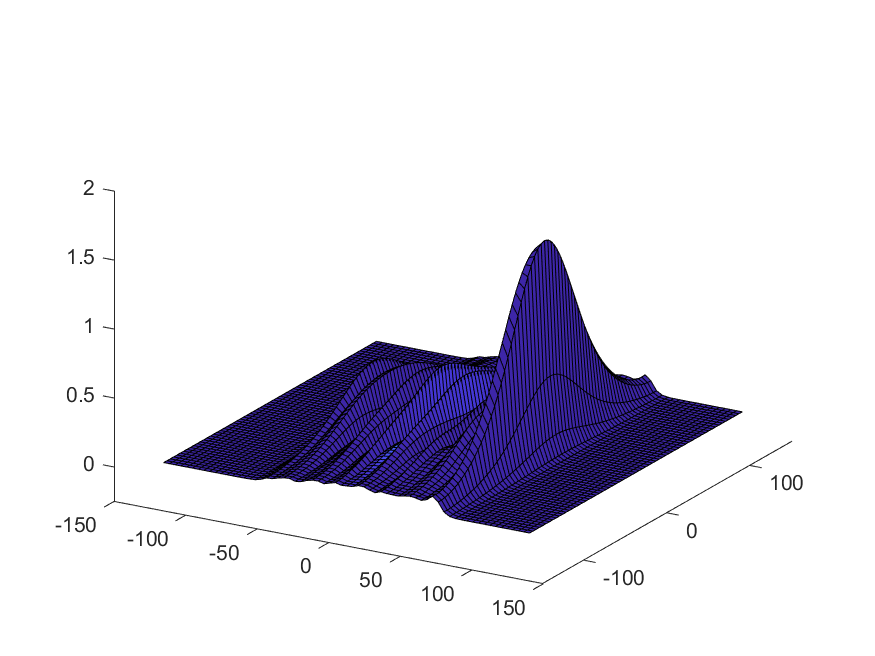}
        \end{subfigure}
    \caption{The left panel shows a wave traveling through masses which are i.i.d. and the right shows a wave traveling through layered random masses. One can spot the long transverse ripples  in the right panel, whereas in the left panel, deviations appear more localized.}
    \label{Comparison Figure}
    \end{figure}

  Finally, we have performed a number of similar experiments in 1D, the results of which are summarized in Table \ref{Comprehensive Table}. The predicted values can all be proven using essentially the same method as what has been demonstrated or discussed in Section \ref{Other Masses} and throughout the rest of the paper. Even though our predicted values are upper bounds, we see that in most cases our estimates are close to sharp. The exception is when the masses are chosen periodically, where the predicted a.e.d. often overshoots by close to a full power of $\ep$. This indicates that integrating $p$, and then applying a triangle inequality to obtain an upper bound on $u$ is not efficient in the periodic case. 
  
  For both the constant coefficient and periodic cases, a half power of $\ep$ is lost with each increase in dimension. This is due to the length scaling and is negated when one considers the coarse-graining limit. We see no reason this trend should not continue into higher dimensions. On the other hand, there is not a decrease in the power of $\epsilon$ in the i.i.d. case. Heuristically, one can see this by comparing the variance of $\chi$ in 1D and in 2D. The fundamental solution $\varphi$ of $\Delta$ plays an important role in the growth rate. In 1D, this fundamental solution is given by $\varphi(\cdot)=\frac{1}{2}|\cdot|.$ Taking the expectation of $\chi_r^2$ and then taking the square-root, where $\chi_r$ is defined by \eqref{Chi restricted}, yields that $\chi_r$ is approximately the size of $r^\frac{3}{2}$ in $\ell^{\infty}$. The same procedure in 2D yields $r\log(r)$.  This accounts for an additional half power loss $\epsilon$ in the 1D case. The argument using the ideas of sub-Gaussian random variables is one way to formalize this intuition and obtain an almost sure estimate. Ultimately, this half power is negated by the increase in dimension, and since the random term in the residual is still the dominant one, we find that the size of the absolute error (as dependent upon $\ep$) roughly does not change from 1D to 2D. Therefore the coarse-graining limit converges faster in 2D.
  
  \begin{table}
 \begin{center}
 \begin{tabular}{|c|c|c|c|c|}
 \hline
 \multicolumn{1}{|c|}{Rates for:}&
\multicolumn{2}{|c|}{a.e.v.}  & \multicolumn{2}{|c|}{a.e.d.}
\\\hline
Mass/Dim. & Predicted & Simulated & Predicted & Simulated
\\
\hline
     1D Const. Coeff. & $1.5$&  $1.5$& $0.5$& $0.5$\\
     \hline 
     1D Period. Coeff. & $0.5$ &$0.7$&$-0.5$ &$0.5$\\
     \hline 
     1D Rando. Coeff. & $-\sigma$ & $0.2$&$-1-\sigma$ &$-0.8$ \\
     \hline 2D Const. Coeff & $1$ & $1.1$ &$0$ & $0.0$  \\
     \hline 2D Period. Coeff & $0$ & $0.1$ &$-1 $&$0.0$ \\
     \hline 2D Rando. Coeff & $-\sigma$& $0.1$  &$-1-\sigma$ & $-1.0$ \\ 
     \hline 2D Layered Coeff & $-0.5 -\sigma$& $-0.4$ &$-1.5-\sigma$ & $-1.5$ \\ \hline
     
 \end{tabular}
 \caption{\label{Comprehensive Table}This table summarizes the epsilon dependence of the absolute errors. The numbers indicate the power of $\epsilon$. $\sigma$ is a small and positive constant. Results have been rounded. }
  \end{center}
 \end{table}
 \section{Conclusion}
 We have rigorously justified the wave equation as descriptor for the macroscopic linearized dynamics of a 2D square lattice composed of i.i.d. masses. We have given analytic as well as numerical evidence that this description is more accurate in 2D than it is in 1D. We conjecture that in 3D, the a.e.d. and a.e.v. is roughly the same as it is in 2D for the i.i.d. case. This means it would be roughly only a half power of epsilon larger than in the constant case. We think such a result is modest evidence that waves propagate more easily in a disordered lattice in higher dimensions. An important exception that was seen to this occurs in the case of layered random masses, where the error became larger from 2D to 1D in the same way it did for periodic or constant masses. Although there are results in the continuous setting that are similar to ours \cite{Papanicolau}, as far as we can tell, this is the first result that provides a rigorous almost sure bounds on the rate of convergence for lattices, and we think that these rates of convergence provide insight into the effects of dimensionality of the disorder on wave propagation. Finally, the techniques introduced, especially the use of sub-Gaussian random variables can probably be used to access similar results for various other discrete systems with different kinds of disorder.
 
\section{Appendix}
\label{Appendix}
\subsection{Probabilistic Estimates A}
\begin{lemma}
\label{Bijection Existence}
There exists a bijection
\be I: \Z^2 \times \N^+/\{1\} \to \N^+\ee
such that 
\be 
I(\bj, r) \leq C\max\{\bj^3,r^3 \}.
\ee 
\end{lemma}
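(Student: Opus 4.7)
The plan is to enumerate the countable set $\Z^2 \times (\N^+\setminus\{1\})$ in ``shells'' of bounded size, so that the index of each pair is controlled by a cubic polynomial in $\max\{|\bj|_\infty, r\}$, and then use equivalence of the $\ell^\infty$ and Euclidean norms on $\Z^2$ to convert this to the claimed bound.

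First, for each integer $N \geq 2$ define the shell
\[
S_N \coloneqq \{(\bj,r)\in\Z^2\times(\N^+\setminus\{1\}) : \max\{|\bj|_\infty, r\}=N\}.
\]
The pairs in shell $N$ are those with $|\bj|_\infty \le N$ and $r \le N$, with equality holding in at least one coordinate. Elementary counting shows
\[
|S_N| \le (2N+1)^2(N-1) \le 9N^3,
\]
and consequently the cumulative count satisfies
\[
\sum_{M=2}^{N} |S_M| \le C_0 N^3
\]
for some absolute constant $C_0$. Next I would fix, for each $N$, an arbitrary bijection of $S_N$ with a block of consecutive positive integers (for instance, order the finitely many pairs in $S_N$ lexicographically in $(j_1,j_2,r)$), and concatenate these blocks across $N=2,3,4,\ldots$ to produce a bijection $I:\Z^2\times(\N^+\setminus\{1\})\to\N^+$.

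By construction, if $I(\bj,r)=n$ and $N=\max\{|\bj|_\infty,r\}$, then $n$ lies in the block for shell $N$, so
\[
I(\bj,r) \le \sum_{M=2}^{N} |S_M| \le C_0 N^3 = C_0 \max\{|\bj|_\infty, r\}^3.
\]
Since $|\bj|_\infty \le |\bj| \le \sqrt{2}\,|\bj|_\infty$, we may replace $|\bj|_\infty$ by $|\bj|$ at the cost of enlarging the constant, obtaining $I(\bj,r)\le C\max\{|\bj|^3, r^3\}$ as claimed.

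The argument is essentially an exercise in bookkeeping; the only substantive step is the shell count $|S_N| = O(N^3)$, which makes the cubic growth bound unavoidable (and tight, since $|\Z^2\times\{2,\ldots,N\}|$ already grows like $N^3$). There is no real obstacle, provided one chooses the outer indexing by $\max\{|\bj|_\infty, r\}$ rather than by, say, $|\bj|_\infty + r$ or $|\bj|\cdot r$, which would either give a worse constant or complicate the counting unnecessarily.
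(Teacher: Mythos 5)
Your proof is correct and follows essentially the same route as the paper: both enumerate $\Z^2\times(\N^+\setminus\{1\})$ in order of increasing ``radius'' $\max\{|\bj|,r\}$ and bound the index by the cumulative count of the corresponding box/half-ball in $\Z^3$, which is $O(N^3)$. (One small phrasing quibble: the bound $(2N+1)^2(N-1)$ is really the cumulative count $\sum_{M\le N}|S_M|$ rather than a sharp count of the single shell $S_N$ — summing the per-shell bound $9M^3$ would give $O(N^4)$ — but since the cumulative sum is exactly the box count the conclusion stands.)
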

\begin{proof}
Notice the set 
\be 
B(n)\coloneqq  \{(\bj,r) \ | \ |\bj| \leq n \ , \ r \leq n \}
\ee
is a subset of the top half of a ball in $\Z^3.$ If we require
		\be 
		I(\bj_1,r_1) <I(\bj_2,r_2)
		\ee
		whenever there exists an integer $n$ s.t. 
		\be (\bj_1,r_1) \in B(n) \ \ \text{but} \ \ (\bj_2,r_2) \notin B(n) \ee 
		we have at worst that 
		\be 
		I(\bj,r) \leq |B(\max\{\bj,r\})| \leq C\max\{\bj^3,r^3 \}.
		\ee
	
\end{proof}
\begin{lemma}
\label{Log Subadditive}
For $r\geq 2$, and $\bj \in\Z^2,$ there exists a constant $C$ s.t. 
\be\log^+(|\bj|+r) \leq C(\log^+(|\bj|)+\log^+(r)) \ee
\end{lemma}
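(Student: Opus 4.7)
The plan is to split the argument into cases based on which of $|\bj|$ and $r$ is larger, reduce to elementary manipulations of the ordinary logarithm, and absorb additive constants using the hypothesis $r \geq 2$.

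First I would dispose of the trivial case $|\bj| = 0$, where $\log^+(|\bj| + r) = \log^+(r)$, so any $C \geq 1$ works. Hence I may assume $|\bj| \geq 1$ (since $\bj \in \Z^2$), which combined with $r \geq 2$ guarantees $|\bj| + r \geq 2$, so the $\log^+$ on the left coincides with an ordinary $\log$, and similarly $\log^+(r) = \log(r)$.

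Next I would symmetrize by assuming without loss of generality that $|\bj| \leq r$ (the case $r \leq |\bj|$ is handled identically with the roles swapped, using that $|\bj| \geq 1$ gives $\log^+(|\bj|) = \log(|\bj|)$ whenever $|\bj| \geq 2$; the edge case $|\bj| = 1$ can be folded in directly since $|\bj| + r \leq 2r$ still holds). Under $|\bj| \leq r$, we have $|\bj| + r \leq 2r$, so
\begin{equation*}
\log^+(|\bj| + r) \leq \log(2r) = \log 2 + \log(r).
\end{equation*}
Since $r \geq 2$, $\log(r) \geq \log 2 > 0$, so $\log 2 + \log(r) \leq 2\log(r) = 2\log^+(r)$.

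Combining the two symmetric cases yields
\begin{equation*}
\log^+(|\bj| + r) \leq 2\max\{\log^+(|\bj|), \log^+(r)\} \leq 2\bigl(\log^+(|\bj|) + \log^+(r)\bigr),
\end{equation*}
so $C = 2$ suffices. There is no real obstacle here; the only subtlety is keeping track of when $|\bj|$ is $0$ or $1$ so that replacing $\log^+$ by $\log$ is legitimate, which is why the hypothesis $r \geq 2$ is included (it ensures $\log^+(r)$ is bounded below away from $0$ and can absorb the additive $\log 2$).
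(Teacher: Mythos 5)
Your proof is correct, and it takes a slightly different route from the paper's. The paper invokes subadditivity of $x \mapsto \log(1+x)$ (a consequence of concavity) to get $\log(|\bj|+r) \leq \log(|\bj|+1) + \log(r)$, and then absorbs $\log(|\bj|+1)$ into $C\log(|\bj|) + C\log(r)$; note that this last absorption silently relies on the same slack you use explicitly, namely that $\log^+(r) \geq \log 2$ when $r \geq 2$, since $\log(|\bj|+1) \leq C\log(|\bj|)$ fails outright at $|\bj|=1$. Your argument instead uses $|\bj|+r \leq 2\max\{|\bj|,r\}$ followed by $\log(2x) \leq 2\log(x)$ for $x \geq 2$, which is equally elementary, makes the role of the hypothesis $r \geq 2$ more transparent, and yields the explicit constant $C=2$. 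The only cost is the symmetric case split, which you handle correctly (including the edge cases $|\bj|=0$ and $|\bj|=1$). Either proof is acceptable.
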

\begin{proof}
The inequality is trivial for $|\bj|=0$. For $|\bj| \neq 0$, we can prove the inequality for $\log$. By the concavity of $\log$, $\log(\cdot +1)$ is sub-additive.
\be 
\log(|\bj|+r-1+1) \leq \log(|\bj|+1)+\log(r) \leq C( \log(|\bj|)+\log(r)).
\ee
\end{proof}
\begin{lemma}
\label{Bound on delta phi}
For $\varphi$ given by \eqref{Fundamental Solution Explicit} and $\delta_i$ defined in \eqref{Partial Center Difference} we have for some $C$ 
\be |\delta_i(\varphi)(\bj)| \leq \frac{C}{|\bj|+1} \ee
\end{lemma}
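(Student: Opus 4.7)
The plan is to insert the explicit asymptotic form \eqref{Fundamental Solution Explicit} of $\varphi$ directly into the definition \eqref{Partial Center Difference} of $\delta_i$ and bound the resulting expression, treating the large-$|\bj|$ and small-$|\bj|$ regimes separately.

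For $|\bj|$ large (say $|\bj| \geq R$ for some fixed $R \geq 2$ that keeps $\bj \pm \e_i \neq \0$), writing
\bes
\delta_i(\varphi)(\bj) = \frac{1}{2\pi}\bigl(\log|\bj+\e_i| - \log|\bj-\e_i|\bigr) + O\bigl(|\bj+\e_i|^{-2}\bigr) + O\bigl(|\bj-\e_i|^{-2}\bigr),
\ees
the error terms are obviously $O(|\bj|^{-2}) \subset O(|\bj|^{-1})$. The main quantity to control is the logarithmic difference. Using the identity $|\bj+\e_i|^2 - |\bj-\e_i|^2 = 4j_i$, I would write
\bes
|\bj+\e_i| - |\bj-\e_i| = \frac{4j_i}{|\bj+\e_i|+|\bj-\e_i|},
\ees
so that $||\bj+\e_i|-|\bj-\e_i|| \leq 2$ uniformly (since $|j_i| \leq |\bj|$ and the denominator is at least $2|\bj|-2$). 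Then applying the mean value theorem to $\log(\cdot)$ on the interval between $|\bj-\e_i|$ and $|\bj+\e_i|$, both of which are comparable to $|\bj|$, gives
\bes
\bigl|\log|\bj+\e_i|-\log|\bj-\e_i|\bigr| \;\leq\; \frac{\bigl||\bj+\e_i|-|\bj-\e_i|\bigr|}{\min\{|\bj+\e_i|,|\bj-\e_i|\}} \;\leq\; \frac{C}{|\bj|},
\ees
which is the desired bound for large $|\bj|$.

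For $|\bj| \leq R$, the function $\delta_i(\varphi)(\bj)$ takes only finitely many values; since $\varphi$ is finite everywhere (with the convention $\varphi(\0)=0$ as stated after \eqref{Fundamental Solution Explicit}), $\delta_i(\varphi)$ is uniformly bounded on this finite set by some $M$. Then $|\delta_i(\varphi)(\bj)| \leq M \leq M(R+1)/(|\bj|+1)$, and absorbing constants yields the result in this regime.

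The only delicate point is the choice of $R$: one must take $R$ large enough that for $|\bj|\geq R$ the points $\bj \pm \e_i$ are far from $\0$, so that the asymptotic expansion \eqref{Fundamental Solution Explicit} applies to both $\varphi(\bj+\e_i)$ and $\varphi(\bj-\e_i)$ with the same constant $C_0$ (this constant cancels in the difference). Any $R \geq 2$ suffices. Once that is fixed, the two regimes combine to give a single constant $C$ valid for all $\bj \in \Z^2$. Nothing in the argument is hard; the only nontrivial observation is that the $C_0$ constants cancel in the centered difference, which is why $\delta_i$ gains a power of decay over $\varphi$ itself.
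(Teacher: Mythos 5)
Your proof is correct and follows essentially the same route as the paper's: substitute the asymptotic form \eqref{Fundamental Solution Explicit}, note that the $O(|\bj|^{-2})$ remainders and the constant $C_0$ are harmless in the centered difference, handle small $|\bj|$ by finiteness, and bound $\bigl|\log|\bj+\e_i|-\log|\bj-\e_i|\bigr|$ by $C/(|\bj|+1)$ --- the paper does this last step via $\log(1+x)\le x$ applied to the ratio $(|\bj|+1)/(|\bj|-1)$ rather than the mean value theorem, but the content is identical. One small note: your justification of $\bigl||\bj+\e_i|-|\bj-\e_i|\bigr|\le 2$ via the quotient $4j_i/(|\bj+\e_i|+|\bj-\e_i|)$ actually only yields $\le 2|\bj|/(|\bj|-1)$, but the claimed bound is immediate from the reverse triangle inequality since $(\bj+\e_i)-(\bj-\e_i)=2\e_i$, so nothing is lost.
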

\begin{proof}
By \eqref{Fundamental Solution Explicit}, we have
\be 
\delta_i(\varphi)(\bj) =\frac{1}{2\pi}\log^+|\bj+\e_i|-\frac{1}{2\pi}\log^+|\bj-\e_i| +O(|\bj|^{-2})
\ee
Note that when $|\bj|=0$, we are left with only small $O(|j|^{-2})$ terms. For $|\bj|=1$ we have 
\be 
\left|\log^+|\bj+\e_i|-\log^+|\bj-\e_i|\right| \leq \log(2).
\ee
For $|\bj| >1$, we have
\be \begin{aligned}
\left|\log|\bj+\e_i|-\log|\bj-\e_i|\right| &\leq  \left|\log\left(\frac{|\bj|+|\e_i|}{|\bj|-|\e_i|}\right)\right|  \\&=\left|\log\left(\frac{|\bj|-1+2}{|\bj|-1}\right)\right| \\ &\leq \left|\log\left(1+\frac{2}{|\bj|-1}\right)\right| \\ &\leq \left|\log\left(1+\frac{2}{|\bj|+1}\frac{|\bj|+1}{|\bj|-1}\right)\right| \\ &\leq \left|\log\left(1+\frac{4}{|\bj|+1}\right)\right| \\&\leq \frac{4}{|\bj|+1}.
\end{aligned}
\ee
Thus we obtain the result.
\end{proof}
\begin{lemma}
\label{Bound on norm delta phi}
For $\varphi$ given by \eqref{Fundamental Solution Explicit} and $\delta_i$ defined in \eqref{Partial Center Difference}, then when $r \geq 2$, we have for some $C$ 
\be\norm{\delta_i\varphi}^2_{D(\bj,r)} \leq C\log(|\bj|+r) \ee
\end{lemma}
\begin{proof}
By Lemma \ref{Bound on delta phi}
\be \norm{\delta_i\varphi}^2_{D(\bj,r)} \leq C\sum_{\bk \in D(\bj,r)}\left(\frac{1}{|\bk|+1}\right)^2.\ee
The largest magnitude of $|\bk|$ in $D(\bj,r)$ is $|\bj|+r$. Also note that there is some constant $C$ s.t. the number of elements in $D(\bj,r)$ of some magnitude $r'$ is less than $C(r'+1)$. Thus there exists a constant $C$ s.t.  
\be 
\begin{aligned}
\sum_{\bk \in D(\bj,r)}\left(\frac{1}{|\bk|+1}\right)^2&\leq C\sum_{r'=0}^{|\bj|+r}(r'+1)\left(\frac{1}{r'+1}\right)^2 = C\sum_{r'=0}^{|\bj|+r}\left(\frac{1}{r'+1}\right)
\end{aligned}
\ee
A common bound on the harmonic series yields
\be
\sum_{\bk \in D(\bj,r)}\left(\frac{1}{|\bk|+1}\right)^2 \leq C\log(|\bj|+r+2),
\ee
which yields the result.
\end{proof}
\subsection{Lattice Energy Argument A}

\begin{lemma} \label{Product Rule}
Let $f,g:\Z^2 \to \R$. Recall the definition of the operators in \eqref{Partial Center Difference}, \eqref{Partial differences}, \eqref{Shifts}, and \eqref{Partial Laplacian}. Then
\be
\Delta_i (fg)= \Delta_i (f)g+ S_i^+(f)\Delta_i(g)+\delta_i(f)\delta_i^-(g).
\ee
\end{lemma}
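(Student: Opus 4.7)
The statement is a purely algebraic identity on $\Z^2$, so my plan is to verify it pointwise by unfolding all four operators at an arbitrary lattice point $\bj$ and collecting terms. Concretely, I would write out
\bes
\Delta_i(fg)(\bj) = f(\bj+\e_i)g(\bj+\e_i) - 2f(\bj)g(\bj) + f(\bj-\e_i)g(\bj-\e_i)
\ees
from the definition \eqref{Partial Laplacian}, and then expand each of the three summands on the right: $\Delta_i(f)g$ contributes $[f(\bj+\e_i)-2f(\bj)+f(\bj-\e_i)]g(\bj)$, $S_i^+(f)\Delta_i(g)$ contributes $f(\bj+\e_i)[g(\bj+\e_i)-2g(\bj)+g(\bj-\e_i)]$, and $\delta_i(f)\delta_i^-(g)$ contributes $[f(\bj+\e_i)-f(\bj-\e_i)][g(\bj)-g(\bj-\e_i)]$.

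I would then group the resulting nine monomials by the pair of shifts appearing in each factor. The ``diagonal'' terms $f(\bj+\e_i)g(\bj+\e_i)$, $f(\bj)g(\bj)$, and $f(\bj-\e_i)g(\bj-\e_i)$ should appear with coefficients matching the left-hand side, while the six ``off-diagonal'' cross terms $f(\bj+\e_i)g(\bj)$, $f(\bj-\e_i)g(\bj)$, and $f(\bj+\e_i)g(\bj-\e_i)$ must cancel. A short inspection confirms this: for instance $f(\bj+\e_i)g(\bj)$ picks up a $+1$ from $\Delta_i(f)g$, a $-2$ from $S_i^+(f)\Delta_i(g)$, and a $+1$ from $\delta_i(f)\delta_i^-(g)$, summing to $0$, and similarly for the other cross terms.

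There is no real obstacle here; the identity is just the natural discrete product rule for the one-dimensional second difference along direction $\e_i$, and the slightly asymmetric choice of factors ($S_i^+$ in the middle term and $\delta_i^-$ in the correction) is precisely what is needed to make the bookkeeping close. The only thing to be mildly careful about is the sign convention $\delta_i^-(g)(\bj) = g(\bj)-g(\bj-\e_i)$ from \eqref{Partial differences}, since a sign slip there would leave a residual term of the form $f(\bj-\e_i)g(\bj-\e_i)$ with the wrong coefficient. Once that convention is respected the verification is a line or two, and the lemma follows.
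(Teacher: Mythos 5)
Your proposal is correct and is essentially the paper's own argument run in the opposite direction: the paper starts from $\Delta_i(fg)=S_i^+(fg)-2fg+S_i^-(fg)$ and adds and subtracts terms until the three summands emerge, while you expand the right-hand side and check that the cross terms cancel, which is the same pointwise algebraic verification. (Minor bookkeeping slips aside --- the expansion actually produces ten monomial instances and only three distinct off-diagonal products, not nine and six --- the cancellation you exhibit for $f(\bj+\e_i)g(\bj)$ is exactly right and the identity holds.)
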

\begin{proof}
Starting from the definition in \eqref{Partial Laplacian} we have
\be\begin{aligned}
\Delta_i(fg)=&S_i^+(fg)-2fg+S_i^-(fg) \\
=&S_i^+(f)g-2fg+S_i^-(f)g+S_i^+(fg)+S_i^-(fg) -S_i^+(f)g-S_i^-(f)g \\
=&\Delta_i(f)g+S_i^+(fg) -S_i^+(f)g+S_i^-(fg)-S_i^-(f)g\\
=&\Delta_i(f)g+S_i^+(fg)-2S_i^+(f)g +S_i^+(f)S_i^-(g) \\&+S_i^+(f)g-S_i^+(f)S_i^-(g)  +S_i^-(fg) -S_i^-(f)g \\
=&\Delta_i(f)g+S_i^+(f)\Delta_i(g)+S_i^+(f)(g-S_i^-(g))-S_i^-(f)(g-S_i^-(g))\\
=&\Delta_i(f)g+S_i^+(f)\Delta_i(g)+\delta_i(f)\delta^-_i(g).
\end{aligned}
\ee 
\end{proof}
\subsection{The Effective Wave A}
\begin{lemma}\label{Standard Energy Result}
Let $U$ satisfy \eqref{The efffective equation} with sufficiently smooth initial conditions given by \eqref{Macroscopic Initial Conditions}. Then
\be 
\left|E\left(D^k\frac{\partial^i U}{\partial \tau^i}\right) \right|\leq C\left(\norm{\nabla \phi}_{H^{i+k}}^2+\norm{\psi}_{H^{i+k}}^2\right) \ee where $E\left(D^k\frac{\partial^i U}{\partial \tau^i}\right)$ is defined by \eqref{Continuum Energy}. Here the constant $C$ depends on the wave speed $c$ which in turn is entirely dependent on $\bar{m}.$
\end{lemma}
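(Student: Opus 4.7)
The plan is to exploit the classical energy conservation for the constant-coefficient wave equation, applied not to $U$ itself but to the combined spatial--temporal derivative $V \coloneqq D^k \partial_\tau^i U$, and then reduce the energy at $\tau=0$ to spatial derivatives of $\phi$ and $\psi$ using the equation.

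First I would verify that for any smooth solution $V$ of $\bar m \partial_{\tau\tau} V = \Delta_\X V$ with sufficiently rapid decay at spatial infinity, the energy $E(V)(\tau) = \tfrac{1}{2}\int_{\R^2} (\partial_\tau V)^2 + c^2 |\nabla V|^2\, d\X$ is independent of $\tau$. This is the standard computation: differentiate under the integral, substitute $\partial_{\tau\tau}V = c^2 \Delta_\X V$, and integrate by parts in $\X$ to obtain $\dot E(V)(\tau)=0$; the boundary contribution vanishes by the assumed decay. Since the operator $\bar m \partial_{\tau\tau} - \Delta_\X$ has constant coefficients, $V = D^k \partial_\tau^i U$ solves the very same wave equation as $U$ does, and so its energy is conserved as well. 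It therefore suffices to bound $E(V)(0)$ by the right-hand side of the claim.

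The final step is evaluation at $\tau=0$. The two integrands in $E(V)(0)$ are $(\partial_\tau V)^2 = (\partial_\tau^{i+1} D^k U)^2$ and $|\nabla V|^2 = |D^{k+1} \partial_\tau^i U|^2$ at $\tau=0$. I would iteratively use $\partial_\tau^2 U = c^2 \Delta_\X U$ to trade every pair of time derivatives for a Laplacian in $\X$. A parity case split then gives: when $i$ is even the initial data $\partial_\tau^i U(\cdot,0)$ is a constant times $\Delta_\X^{i/2}\phi$ and $\partial_\tau^{i+1} U(\cdot,0)$ is a constant times $\Delta_\X^{i/2}\psi$; when $i$ is odd the two roles swap with an extra Laplacian landing on $\phi$. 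In every case the resulting spatial orders are at most $k+i+1$ on $\phi$ and at most $k+i$ on $\psi$. Taking $L^2$ norms, absorbing powers of $c$ into a constant that depends only on $\bar m$, and using $\|D^{k+i+1}\phi\|_{L^2} \leq \|\nabla \phi\|_{H^{k+i}}$ together with $\|D^{k+i}\psi\|_{L^2} \leq \|\psi\|_{H^{k+i}}$ yields exactly the stated bound.

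I do not expect a genuine obstacle here, as the result is classical; the only care needed is the even/odd bookkeeping in the last step so that spatial derivatives of $\phi$ and $\psi$ land precisely in the Sobolev scale $H^{i+k}$ (applied to $\nabla\phi$ and $\psi$, respectively) rather than above or below it. A minor technical remark is that one should first establish the identity for Schwartz initial data, where differentiation under the integral and the density of smooth compactly supported functions in $H^{i+k+1}$ is transparent, and then pass to the stated class of initial conditions by a standard approximation argument, relying on the linearity of the wave equation and the continuity of $E$ in the appropriate norms.
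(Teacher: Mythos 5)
Your proposal is correct and follows essentially the same route as the paper's proof: conservation of $E(D^k\partial_\tau^i U)$ via differentiation under the integral and integration by parts (with the boundary term vanishing), followed by the even/odd reduction of $\partial_\tau^i U(\cdot,0)$ to powers of $\Delta_\X$ acting on $\phi$ and $\psi$. The only cosmetic difference is that the paper justifies the vanishing boundary term by finite propagation speed rather than assumed decay.
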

\begin{remark}
   In the next couple of proofs, we have often opted for the the notation $\frac{d}{d\tau}$ instead of $\frac{\partial}{\partial\tau}$ since we are regarding $E(D^{k}\frac{\partial^{i}U}{\partial\tau^i})$ as function of $\tau$ only. This makes the notation distinct from spatial derivatives. Of course, inside the integral, the notation technically describes a partial derivatives of $U$. 
\end{remark}
\begin{proof}
Note that $D^kU(\X,\tau):\R^2 \times \R \to \R^{2^k}.$ Consider the $j^{\text{th}}$ component of the $2^k$ dimensional vector of $D^kU$ denoted by $D_j^kU $. Consider a ball of radius $r$ in $\R^d$ about the origin denoted $B(r)$. 
\be \label{}  
\dot{E}\left(\frac{\partial^iD_j^k U}{\partial \tau^i}\right)(\tau)=\lim_{r \to \infty}\int_{B(r)} \left(\frac{d^{i+1}D_j^kU}{d\tau^{i+1}} \right) \left(\frac{d^{i+2}D_j^kU}{d\tau^{i+2}}\right) +c^2\nabla\left(\frac{d^{i}D_j^kU}{d\tau^{i}}\right)\cdot \nabla\left( \frac{d^{i+1}D_j^kU}{d\tau^{i+1}}\right)d\X
\ee
Using integration by parts we find 
\be\begin{aligned}\label{Integration by Parts}\dot{E}\left(\frac{\partial^iD_j^k U}{\partial \tau^i}\right)=&\lim_{r \to \infty}\int_{B(r)}\left(\frac{d^{i+1}D_j^kU}{d\tau^{i+1}} \right) \left(\frac{d^{i+2}D_j^kU}{d\tau^{i+2}}\right)-c^2 \Delta\left(\frac{d^iD_j^kU}{d\tau^i}\right)\left( \frac{d^{i+1}D_j^kU}{d\tau^{i+1}}\right) d\X \\&+\lim_{r \to \infty}\int_{\partial B(r)}\left(\frac{\partial\frac{d^iD_j^kU}{d\tau^i}}{\partial \nu}\right)\left(\frac{d^{i+1}D_j^kU}{d\tau^{i+1}}\right)dS.\end{aligned}\ee
Since $U$ solves \eqref{The efffective equation}, swapping the order of partials yields,
\be 
\left(\frac{d^{i+2}D_j^kU}{d\tau^{i+2}}\right)-c^2 \Delta\left(\frac{d^iD^k_jU}{d\tau^i}\right) =0.
\ee

We are left with
\be 
\dot{E}\left(\frac{\partial^iD_j^k U}{\partial \tau^i}\right)(\tau)=\lim_{r \to \infty}\int_{\partial B(r)}\left(\frac{\partial\frac{d^iD_j^kU}{d\tau^i}}{\partial \nu}\right)\left(\frac{d^{i+1}D_j^kU}{d\tau^{i+1}}\right)dS,
\ee
which is $0$ for all $\tau$ due to the finite propagation speed of the wave. Note that at time zero, when $i$ is even, 
\be \label{even} \frac{d^{i}D_j^k U}{d\tau^{i}}=c^2D_j^k\Delta_\X^{\frac{i}{2}}U=c^2D_j^k \Delta_\X^{\frac{i}{2}}\phi\ee
while, when $i$ is odd, 
\be \label{odd} \frac{d^{i} D_j^kU}{d\tau^{i}}=c^2D_j^k\Delta_\X^{\frac{i-1}{2}}\frac{dU
}{d\tau}=c^2D_j^k\Delta_\X^{\frac{i-1}{2}}\psi.\ee
Since \eqref{Integration by Parts} is $0$ and holds for all $j \in \{1, 2, 3,  \cdots, 2^k\}$, we have using either \eqref{even} or \eqref{odd} in \eqref{Continuum Energy} that
\be 
\left|E\left(\frac{\partial^iD_j^k U}{\partial \tau^i}\right)(\tau) \right| = \left|E\left(\frac{\partial^iD_j^k U}{\partial \tau^i}\right)(0) \right| \leq C\left(\norm{\nabla \phi}_{H^{i+k}}^2+\norm{\psi}_{H^{i+k}}^2\right)
\ee 
where the final constant $C$ may depend on $k$ and $c$ (which depends upon $\bar{m}).$
\end{proof}

\begin{lemma}\label{Weighted Energy result}
Let $U$ satisfy \eqref{The efffective equation} with initial conditions given by \eqref{Macroscopic Initial Conditions}. Suppose that $w$ satisfies \eqref{Assumption 1 on Weight} and \eqref{Assumptions 2 on Wieght}. Then 
\be 
\left|E_{w}\left(D^k\frac{\partial^i U}{\partial \tau^i}\right)(\tau)\right| \leq (|\tau|+1)C\left(\norm{\phi}^2_{H_w^{i+k+1}}+\norm{\psi}^2_{H_w^{i+k}}\right).
\ee 
The constant $C$ depends upon $\bar{m}$ as well as the bound $W$ found in \eqref{Assumptions 2 on Wieght}.
\end{lemma}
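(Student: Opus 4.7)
The plan is to differentiate $E_w$ in $\tau$, use integration by parts together with the fact that $V_j \coloneqq (D^k\partial_\tau^i U)_j$ again solves the effective wave equation, and reduce $\dot E_w$ to an error term controlled by the unweighted, $\tau$-independent energy of Lemma \ref{Standard Energy Result}. Fix one scalar component $V_j$ of the vector $D^k\partial_\tau^i U$. Since $D^k$ and $\partial_\tau^i$ commute with $\partial_{\tau\tau}$ and with $\Delta_\X$, the function $V_j$ satisfies $\partial_{\tau\tau}V_j = c^2 \Delta_\X V_j$. Differentiating \eqref{Weighted Energy} and integrating by parts in the gradient term,
\be
\int_{\R^2} c^2 w^2\, \nabla V_j\cdot\nabla\partial_\tau V_j\, d\X = -\int_{\R^2} c^2\partial_\tau V_j\bigl[\nabla(w^2)\cdot\nabla V_j + w^2\Delta_\X V_j\bigr]\,d\X,
\ee
where the boundary terms vanish by finite propagation speed once $\phi,\psi$ are sufficiently smooth and decaying. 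The $w^2\partial_\tau V_j\,\partial_{\tau\tau}V_j$ term cancels exactly against the $-c^2 w^2\partial_\tau V_j\,\Delta_\X V_j$ term, leaving
\be
\dot E_w(V_j)(\tau) = -c^2\int_{\R^2}\partial_\tau V_j\,\nabla(w^2)\cdot\nabla V_j\, d\X.
\ee

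Next, apply Cauchy--Schwarz together with the hypothesis $|\nabla(w^2)|\leq B$ from \eqref{Assumptions 2 on Wieght} to get
\be
|\dot E_w(V_j)(\tau)| \leq C\int_{\R^2}(\partial_\tau V_j)^2 + |\nabla V_j|^2\,d\X \leq C'\,E(V_j)(\tau),
\ee
with $C'$ depending on $c=\bar m^{-1/2}$ and $B$. Crucially the right-hand side is the \emph{unweighted} energy of $V_j$, which by Lemma \ref{Standard Energy Result} is conserved in $\tau$ and is bounded by $\norm{\nabla\phi}^2_{H^{i+k}} + \norm{\psi}^2_{H^{i+k}}$; since $w\geq 1$, this in turn is no larger than $\norm{\phi}^2_{H^{i+k+1}_w} + \norm{\psi}^2_{H^{i+k}_w}$. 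Integrating the bound on $\dot E_w(V_j)$ from $0$ to $\tau$ then gives
\be
E_w(V_j)(\tau) \leq E_w(V_j)(0) + |\tau|\,C\left(\norm{\phi}^2_{H^{i+k+1}_w} + \norm{\psi}^2_{H^{i+k}_w}\right).
\ee

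The last step is to absorb $E_w(V_j)(0)$ into the right-hand side. At $\tau=0$ the quantities $\partial_\tau V_j$ and $\nabla V_j$ are spatial derivatives of $\partial_\tau^{i+1}U|_{\tau=0}$ and $\partial_\tau^i U|_{\tau=0}$, respectively, which via the equation become $c^{\,m}\Delta_\X^{m/2}\phi$ or $c^{\,m}\Delta_\X^{(m-1)/2}\psi$ depending on the parity of the order $m$ of the time derivative. Counting orders, every resulting term lands at total spatial derivative order at most $i+k+1$ on $\phi$ or $i+k$ on $\psi$, so $E_w(V_j)(0)$ is itself bounded by $C\bigl(\norm{\phi}^2_{H^{i+k+1}_w} + \norm{\psi}^2_{H^{i+k}_w}\bigr)$ and merges with the $|\tau|$-linear contribution after replacing $|\tau|$ by $|\tau|+1$ and adjusting the constant (or, equivalently, only worrying about $|\tau|$ away from $0$, since the claim is a qualitative growth bound used later only for $|\tau|\leq T$). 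Summing over the $2^k$ components $V_j$ produces the vector-valued statement.

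The main technical obstacle is not the calculus but the accounting: one has to verify that the conversion of time derivatives to spatial derivatives at $\tau=0$ (via $\partial_{\tau\tau}U=c^2\Delta_\X U$) lands exactly on orders $i+k+1$ for $\phi$ and $i+k$ for $\psi$ as in the statement, and that the integration-by-parts boundary integrals truly vanish, which in turn rests on the finite propagation speed of the wave equation applied to compactly supported-enough (or sufficiently decaying) data. These are both routine once the right order of derivatives is tracked, but they are the places where a sloppy execution would produce the wrong exponents.
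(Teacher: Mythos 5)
Your proposal is correct and follows essentially the same route as the paper: differentiate $E_w$, integrate by parts so the wave equation cancels the $w^2$ bulk terms, bound the leftover $\nabla(w^2)$ term by the conserved unweighted energy of Lemma \ref{Standard Energy Result}, integrate in $\tau$, and control $E_w(0)$ by converting time derivatives to spatial ones at $\tau=0$. Your remark that the $E_w(0)$ contribution cannot literally be absorbed into a term proportional to $|\tau|$ (so one should read $|\tau|$ as $|\tau|+1$ or restrict to $|\tau|$ bounded away from $0$) is a fair observation that the paper's own writeup glosses over, but it does not affect how the lemma is used downstream.
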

\begin{proof}
Note that $D^kU(\X,\tau):\R^2 \times \R \to \R^{2^k}.$ Consider the $j^{\text{th}}$ component of the $2^k$ dimensional vector of $D^kU$ denoted by $D_j^kU $. Consider a ball of radius $r$ in $\R^d$ about the origin denoted $B(r)$. 
\be
\dot{E}_{w}\left(\frac{\partial^iD_j^k U}{\partial \tau^i}\right)(\tau)=\lim_{r \to \infty}\int_{B(r)} w^2\left(\frac{d^{i+1}D_j^kU}{d\tau^{i+1}} \right) \left(\frac{d^{i+2}D_j^kU}{d\tau^{i+2}}\right) +c^2w^2\nabla\left(\frac{d^{i}D_j^kU}{d\tau^{i}}\right)\cdot \nabla\left( \frac{d^{i+1}D_j^kU}{d\tau^{i+1}}\right)d\X
\ee
Using integration by parts we find 
\be\begin{aligned}\dot{E}_{w}\left(\frac{\partial^iD_j^k U}{\partial \tau^i}\right)(\tau)=&\lim_{r \to \infty}\int_{B(r)}w^2\left(\frac{d^{i+1}D_j^kU}{d\tau^{i+1}} \right) \left(\frac{d^{i+2}D_j^kU}{d\tau^{i+2}}\right)-c^2 \nabla \cdot\left(w^2\nabla\left(\frac{d^iD_j^kU}{d\tau^i}\right)\right)\left( \frac{d^{i+1}D_j^kU}{d\tau^{i+1}}\right) d\X \\&+\lim_{r \to \infty}\int_{\partial B(r)}w^2c^2\left(\frac{\partial\frac{d^iD_j^kU}{d\tau^i}}{\partial \nu}\right)\left(\frac{d^{i+1}D_j^kU}{d\tau^{i+1}}\right)dS.\end{aligned}\ee
The boundary term vanishes in the limit due to finite propagation speed of the wave. (Decay rates on $U$ and its derivatives are enforced down below.) We need to calculate 
 \be \nabla\cdot\left(w^2\nabla\left(\frac{d^iD_j^kU}{d\tau^i}\right)\right)=\left(\nabla w^2\right)\cdot \nabla \left(\frac{d^iD_j^kU}{d\tau^i}\right)+w^2\Delta\left(\frac{d^iD_j^kU}{d\tau^i}\right).\ee Since $\frac{d^iD_j^kU}{d\tau^i}$ satisfies \eqref{The efffective equation}, we are left with
 \be\dot{E}_w\left(\frac{\partial^iD_j^k U}{\partial \tau^i}\right)(\tau)= \lim_{r \to \infty} \int_{B(r)}\nabla w^2\cdot \nabla \left(\frac{d^{i}D_j^kU}{d\tau^i}\right)\left(\frac{d^{i+1}D_j^kU}{d\tau^{i+1}}\right)d\X.\ee
 
 Using the assumption on $w$ in \eqref{Assumptions 2 on Wieght} and Cauchy-Schwarz, we have  \be \left|\nabla w^2 \cdot \nabla \left(\frac{d^{i}D_j^kU}{d\tau^i}\right)\right| \leq C \left|\nabla\left(\frac{d^{i}D_j^kU}{d\tau^i}\right)\right|.\ee Therefore 
 \be\dot{E}_w\left(\frac{\partial^iD_j^k U}{\partial \tau^i}\right)(\tau)\leq  \lim_{r \to \infty} C\int_{B(r)} \left|\nabla\left(\frac{d^{i}D_j^kU}{d\tau^i}\right)\right|\left|\frac{d^{i+1}D_j^kU}{d\tau^{i+1}}\right|.d\X\ee Using Cauchy-Schwarz once and swapping derivatives
 \be \dot{E}_{w}\left(\frac{\partial^iD_j^k U}{\partial \tau^i}\right)(\tau) \leq C \norm{D_j^{k+1}\left(\frac{d^{i}U}{d\tau^i}\right)}_{L^2}\norm{D_j^k\left(\frac{d^{i+1}U}{d\tau^{i+1}}\right)}_{L^2}. \ee
From \eqref{Basic Energy Bound 1}, we know how to bound such beasts.
\be \dot{E}_{w}\left(\frac{\partial^iD_j^k U}{\partial \tau^i}\right)(\tau) \leq C\left(\norm{\nabla \phi}^2_{H^{i+k}}+\norm{\psi}^2_{H^{i+k}}\right).\ee  Integrating yields
\be 
E_{w}\left(\frac{\partial^iD_j^k U}{\partial \tau^i}\right)(\tau) \leq |\tau| C\left(\norm{\phi}^2_{H^{i+k+1}}+\norm{\psi}^2_{H^{i+k}}\right)+E_{w}\left(D_j^k\frac{\partial^i U}{\partial \tau^i}\right)(0) .
\ee
Using \eqref{even} or \eqref{odd} and the definition \eqref{Def of Weighted Norm w}
\be 
E_{w}\left(\frac{\partial^iD_j^k U}{\partial \tau^i}\right)(0) \leq C\left( \norm{\nabla \phi}_{H^{i+k}_{w}}^2+\norm{\psi}_{H^{i+k}_{w}}^2\right).
\ee
This holds for all $j \in \{1, 2,3, \dots , 2^k\}.$ Thus 
\be 
\left| E_{w}\left(D^k\frac{\partial^i U}{\partial \tau^i}\right)(\tau) \right| \leq (|\tau|+1)C\left(\norm{\phi}^2_{H_w^{i+k}}+\norm{\psi}^2_{H_w^{i+k}}\right).
\ee
Again, the constant $C$ depends upon $k$ and $\bar{m}$ but also $W$.
\end{proof}

\begin{lemma}\label{Tail Energy Lemma}
Let $U$ satisfy \eqref{The efffective equation} with initial conditions given by \eqref{Macroscopic Initial Conditions}
\be
\left|\tilde{E}\left(D^k\frac{\partial^i U}{\partial \tau^i}\right)(\tau)\right| \leq C\left(\norm{\nabla \phi}^2_{H^{i+k}(B(\ep^{-\sigma})^c)}+\norm{\psi}^2_{H^{i+k}(B(\ep^{-\sigma})^c)}\right).
\ee The constant $C$ depends only on $\bar{m}.$
\end{lemma}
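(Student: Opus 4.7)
The plan is to invoke the classical finite--propagation--speed/exterior--cone energy argument. The key observation is that $D^k\partial_\tau^i U$ itself solves the wave equation \eqref{The efffective equation} (derivatives commute with the operator), so it suffices to prove the claim for $U$ with initial data $(D^k \partial_\tau^i U)(\X,0)$ and $(D^k \partial_\tau^{i+1}U)(\X,0)$; these can be rewritten purely in terms of $\phi$ and $\psi$ via \eqref{even} and \eqref{odd} at the cost of absorbing at most $i+k$ spatial derivatives, which is exactly what the norms on the right hand side are set up to accommodate.

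For the core estimate I would introduce the receding exterior region
\begin{equation*}
\Omega_s := \{\X \in \R^2 : |\X| > c(|\tau|-s) + \ep^{-\sigma}\}, \qquad 0 \leq s \leq |\tau|,
\end{equation*}
so that $\Omega_0 = B(c|\tau|+\ep^{-\sigma})^c$ and $\Omega_{|\tau|} = B(\ep^{-\sigma})^c$, and define
\begin{equation*}
\mathcal{E}(s) := \frac{1}{2}\int_{\Omega_s} (\partial_s V)^2 + c^2 |\nabla V|^2 \, d\X,
\end{equation*}
where $V$ is any scalar component of $D^k \partial_\tau^i U$ (after the reduction above). Then $\tilde E(V)(\tau) = 2\mathcal{E}(0)$ and what I want to show is $\mathcal{E}(0) \leq \mathcal{E}(|\tau|)$. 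Differentiating $\mathcal{E}$ via Reynolds transport, integrating $c^2 \nabla V \cdot \nabla \partial_s V$ by parts on $\Omega_s$, and using $\partial_{ss}V = c^2 \Delta V$ leaves only a surface integral on $\partial \Omega_s$; since this boundary recedes inward with speed $c$, the integrand is
\begin{equation*}
c^2 \partial_s V (\nabla V \cdot \mathbf{n}) + \tfrac{c}{2}\bigl[(\partial_s V)^2 + c^2|\nabla V|^2\bigr],
\end{equation*}
and Cauchy--Schwarz (or the elementary inequality $2c^2|\partial_s V\, \nabla V\cdot\mathbf{n}| \leq c(\partial_s V)^2 + c^3|\nabla V|^2$) shows this is pointwise nonnegative. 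Hence $\mathcal{E}$ is nondecreasing in $s$, giving $\mathcal{E}(0) \leq \mathcal{E}(|\tau|)$.

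Evaluating $\mathcal{E}(|\tau|)$ is just the initial energy of $V$ restricted to $B(\ep^{-\sigma})^c$, which by \eqref{even}--\eqref{odd} is controlled by $\norm{\nabla\phi}^2_{H^{i+k}(B(\ep^{-\sigma})^c)} + \norm{\psi}^2_{H^{i+k}(B(\ep^{-\sigma})^c)}$ with a constant depending only on $c$ (hence only on $\bar m$); summing over the $2^k$ vector components of $D^k\partial_\tau^i U$ absorbs a dimensional factor into $C$ and yields the claim. The one step that requires a little care, and is the main thing to verify, is that the moving--boundary differentiation and the integration by parts are actually justified for a general $L^2$ initial datum on $B(\ep^{-\sigma})^c$; this is standard and handled by the usual density argument (approximate $\phi,\psi$ by Schwartz functions, pass to the limit) once the sign on the boundary flux is established, so no genuinely new difficulty appears beyond the classical exterior cone calculation.
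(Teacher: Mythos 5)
Your argument is essentially the paper's own proof: the paper likewise differentiates the moving-domain tail energy (Leibniz rule), integrates by parts so the wave equation kills the bulk term, and uses the elementary inequality $c^2|\partial_\tau V\,\partial_\nu V|\leq \tfrac{c}{2}(\partial_\tau V)^2+\tfrac{c^3}{2}|\nabla V|^2$ to sign the boundary flux, concluding $\tilde{E}(\tau)\leq\tilde{E}(0)$ and then invoking \eqref{even}--\eqref{odd}. Your reparametrization by $s$ (so the energy is nondecreasing toward the initial slice) is only a cosmetic reversal of the paper's statement that $\dot{\tilde{E}}\leq 0$, so the two proofs coincide.
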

\begin{proof}
Without loss of generality, let $\tau$ be positive. Note that $D^kU(\X,\tau):\R^2 \times \R \to \R^{2^k}.$ Consider the $j^{\text{th}}$ component of the $2^k$ dimensional vector of $D^kU$ denoted by $D_j^kU $. We apply Leibniz's rule
\be\begin{aligned}\dot{\tilde{E}}\left(\frac{\partial^iD_j^k U}{\partial \tau^i}\right)(\tau)=&\int_{B(c|\tau|+\ep^{-\sigma} )^c} \left(\frac{d^{i+1}D_j^kU}{d\tau^{i+1}} \right) \left(\frac{d^{i+2}D_j^kU}{d\tau^{i+2}}\right) +c^2\nabla\left(\frac{d^{i}D_j^kU}{d\tau^{i}}\right)\cdot\nabla\left( \frac{d^{i+1}D_j^kU}{d\tau^{i+1}}\right)d\X \\ &-\frac{1}{2}\int_{\partial B(c\tau +\ep^{-\sigma})}c\left(\frac{d^{i+1}D_j^kU}{d\tau^{i+1}} \right)^2+c^3\left|\nabla\left(\frac{d^{i}D_j^kU}{d\tau^{i}}\right)\right|^2dS.
\end{aligned}\ee
Let $\nu$ be the unit normal pointing out of the ball. As we have seen two times already, since $D_j^kU$ satisfies \eqref{The efffective equation}, so integration by parts yields
\be \begin{aligned}
\dot{\tilde{E}}^{(i)}\left(\frac{\partial^iD_j^k U}{\partial \tau^i}\right)(\tau)=&-\int_{\partial B(c\tau +\ep^{-\sigma})}c^2\left(\frac{\partial\frac{d^iD_j^kU}{d\tau^i}}{\partial \nu}\right)\left(\frac{d^{i+1}D_j^kU}{d\tau^{i+1}}\right)
\\&+
\left(\frac{1}{2}c\left(\frac{d^{i+1}D_j^kU}{d\tau^{i+1}} \right)^2+\frac{1}{2}c^3\left|\nabla\left(\frac{d^{i}D_j^kU}{d\tau^{i}}\right)\right|^2\right)dS.
\end{aligned}
\ee
The first term in the integrand is bounded as
\be c^2\left|\left(\frac{\partial\frac{d^iD_j^kU}{d\tau^i}}{\partial \nu}\right)\left(\frac{d^{i+1}D_j^kU}{d\tau^{i+1}}\right) \right| \leq c^2\left|\nabla\frac{d^iD_j^kU}{d\tau^i} \right| \left|\frac{d^{i+1}D_j^kU}{d\tau^{i+1}} \right| \leq \frac{1}{2}c\left(\frac{d^{i+1}D_j^kU}{d\tau^{i+1}} \right)^2+ \frac{1}{2}c^3\left|\nabla\frac{d^iD_j^kU}{d\tau^i} \right|^2.\ee
Therefore 
\be\dot{\tilde{E}}\left(\frac{\partial^iD_j^k U}{\partial \tau^i}\right)(\tau) \leq 0, \ee and thus 
\be \tilde{E}\left(\frac{\partial^iD_j^k U}{\partial \tau^i}\right)(\tau) \leq \tilde{E}\left(D_j^k\frac{\partial^i U}{\partial \tau^i}\right)(0).\ee
Using \eqref{even} or \eqref{odd} 
\be \tilde{E}\left(\frac{\partial^iD_j^k U}{\partial \tau^i}\right)(\tau)\leq  C\left(\norm{\nabla \phi}^{2}_{H^{i+k}(B(\ep^{-\sigma})^c)}+\norm{\psi}^{2}_{H^{i+k}(B(\ep^{-\sigma})^c)}\right) .\ee 
This holds for all $j \in \{1, 2, 3, \dots, 2^k\}$ therefore yielding the result where the final constant will depend upon $k$ and $c$ (which depends on $\bar{m}$.)
\end{proof}

\begin{lemma}\label{Shrinking Tail}
For $\sigma>0$
 \be \norm{\phi}_{H^{k}(B(\ep^{-\sigma})^c)} \leq \ep\norm{\phi}_{H^{k}_{\sigma}} .\ee
\end{lemma}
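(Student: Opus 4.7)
The proof plan is straightforward pointwise weight comparison. The key observation is that on the complement $B(\ep^{-\sigma})^c$, the weight $(1+|\X|)^{\sigma^{-1}}$ is uniformly large: since $|\X|\ge \ep^{-\sigma}$ there, we have
\[
(1+|\X|)^{\sigma^{-1}} \;\ge\; (\ep^{-\sigma})^{\sigma^{-1}} \;=\; \ep^{-1},
\]
and hence the indicator $\mathbbm{1}_{B(\ep^{-\sigma})^c}(\X) \le \ep\,(1+|\X|)^{\sigma^{-1}}$ pointwise. This single inequality does essentially all the work.

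First, I would unpack both norms. The left-hand side is $\norm{\phi}_{H^{k}(B(\ep^{-\sigma})^c)} = \sum_{j=0}^{k}\norm{D^j\phi}_{L^2(B(\ep^{-\sigma})^c)}$, and the weighted norm on the right is given by \eqref{Def of Weighted Norm} as $\norm{\phi}_{H^{k}_\sigma} = \sum_{j=0}^{k}\norm{(1+|\cdot|)^{\sigma^{-1}} |D^j\phi|}_{L^2}$. It therefore suffices to show, for each fixed $j\in\{0,\ldots,k\}$, that
\[
\norm{D^j\phi}_{L^2(B(\ep^{-\sigma})^c)} \;\le\; \ep\,\norm{(1+|\cdot|)^{\sigma^{-1}}|D^j\phi|}_{L^2},
\]
and then sum over $j$.

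Second, I would verify the single-term inequality by direct estimation: using the pointwise bound above,
\[
\int_{B(\ep^{-\sigma})^c} |D^j\phi(\X)|^2 \, d\X \;\le\; \ep^{2}\int_{B(\ep^{-\sigma})^c} (1+|\X|)^{2/\sigma}|D^j\phi(\X)|^2\, d\X \;\le\; \ep^{2}\,\norm{(1+|\cdot|)^{\sigma^{-1}}|D^j\phi|}_{L^2}^2,
\]
where the last step drops the domain restriction since the integrand is nonnegative on all of $\R^2$. Taking square roots and summing over $j=0,\ldots,k$ yields the claim.

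There is no real obstacle here; the only thing to be careful about is matching the definition of the weighted norm in \eqref{Def of Weighted Norm} (which uses $(1+|\cdot|)^{\sigma^{-1}}$) with the exponent $\sigma^{-1}$ on the weight, so that $(\ep^{-\sigma})^{\sigma^{-1}}$ collapses exactly to $\ep^{-1}$ and produces the advertised factor of $\ep$. Any other exponent on the weight would either fail to produce $\ep$ or would require a different radius; the definition is calibrated precisely so that the proof is this one-line comparison.
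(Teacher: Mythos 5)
Your proof is correct and rests on exactly the same calibration the paper uses, namely that the weight $(1+|\X|)^{\sigma^{-1}}$ is at least $(\ep^{-\sigma})^{\sigma^{-1}}=\ep^{-1}$ on $B(\ep^{-\sigma})^c$; the paper merely dresses this up by first rescaling to $B(1)^c$ and inserting a ratio of weights before undoing the substitution, which is a cosmetic difference. Your direct pointwise comparison is, if anything, cleaner.
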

\begin{proof}
Consider
\be
\norm{D^{j}\phi}^{2}_{L^2(B(\ep^{-\sigma})^c)} =\int_{B(\ep^{-\sigma})^c}\left|D^{j}\phi(\X)\right|^{2}d\X =\int_{B(\ep^{-\sigma})^c}\frac{\left|\ep ^{\sigma}X\right|^{2\sigma^{-1}}}{\left|\ep ^{\sigma}X\right|^{2\sigma^{-1}}}\left|D^{j}\phi(\X)\right|^{2}d\X.\ee 
Since $\ep^{\sigma}|X| \geq 1$ in the region of the integral, we have
\be
\norm{D^{j}\phi}^{2}_{L^2(B(\ep^{-\sigma})^c)} \leq \ep^{2} \int_{B(\ep^{-\sigma})^c}\left|\X\right|^{2 \sigma^{-1}}\left|D^{k}\phi(\X)\right|^{2}d\X. 
\ee
Thus 
\be
\norm{D^{j}\phi}^2_{L^2(B(\ep^{-\sigma})^c)} \leq \ep^{2}\norm{\left(1+\left| \ \cdot \ \right| \right)^{\sigma^{-1}}D^{j}\phi(\cdot)}^2_{L^2}.
\ee
Taking square roots, summing over $j$ and comparing with the definition of $H^{k}_\sigma$ in \eqref{Def of Weighted Norm} yields the result.
\end{proof}
\subsection{Residual Estimates A}
\begin{lemma}\label{Riemann Sum}
Let $f(\x):\R^2 \to \R $ be in $H^2$ and let $\x^{(\bj)} \in \R^2\cap \prod_{i=1}^{2}[j_1,j_1+1]$ where $\bj \in \mathcal{Q} \subset \Z^2$. Then 
\be 
\sum_{\bj \in \mathcal{Q}}f(\x^{(\bj)})^2 \leq \sum_{\bj \in \mathcal{Q}} 4\norm{f}^2_{H^2(\prod_{i=1}^2 [\bj,\bj+\e_i])}.
\ee
\end{lemma}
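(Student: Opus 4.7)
The plan is to reduce the inequality to a single unit-square Sobolev embedding and then sum. Concretely, it suffices to prove the pointwise bound $|f(\x)|^2 \leq 4\,\norm{f}^2_{H^2(Q)}$ for $f \in H^2(Q)$ and every $\x \in Q$, where $Q = [0,1]^2$. By translation this yields the analogous bound on each unit cube $Q_\bj = \prod_{i=1}^{2} [j_i, j_i+1]$ applied at the point $\x^{(\bj)} \in Q_\bj$, and summing over $\bj \in \mathcal{Q}$ gives the stated estimate. Since $H^2(Q) \hookrightarrow C(\overline{Q})$ in dimension two, the pointwise evaluation $f(\x^{(\bj)})$ is legitimate to begin with, and by density we may prove the local bound for smooth $f$ and pass to the limit.

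For the local bound I would use iterated FTC. For smooth $f$ and any $\x = (x_1, x_2), \y = (y_1, y_2) \in Q$, two successive applications of the fundamental theorem of calculus (first in the $x_1$ slot, then in the $x_2$ slot inside the resulting $\partial_1 f$ integrand) give the identity
\[
f(x_1, x_2) = f(y_1, y_2) + \int_{y_2}^{x_2} \partial_2 f(y_1, t)\, dt + \int_{y_1}^{x_1} \partial_1 f(s, y_2)\, ds + \int_{y_1}^{x_1}\!\int_{y_2}^{x_2} \partial_1 \partial_2 f(s, t)\, dt\, ds .
\]
I then average both sides in $\y$ over $Q$. Because $|Q| = 1$ and each inner integration range has length at most $1$, every term on the right is dominated pointwise in $\x$ by the $L^1(Q)$ norm of $f$, $\partial_2 f$, $\partial_1 f$, and $\partial_1\partial_2 f$ respectively. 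A single Cauchy-Schwarz application promotes each $L^1(Q)$ bound to an $L^2(Q)$ bound, yielding
\[
|f(\x)| \leq \norm{f}_{L^2(Q)} + \norm{\partial_1 f}_{L^2(Q)} + \norm{\partial_2 f}_{L^2(Q)} + \norm{\partial_1\partial_2 f}_{L^2(Q)}.
\]

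Squaring and applying Cauchy-Schwarz to the sum of four terms produces the constant $4$:
\[
|f(\x)|^2 \leq 4\left(\norm{f}^2_{L^2(Q)} + \norm{\partial_1 f}^2_{L^2(Q)} + \norm{\partial_2 f}^2_{L^2(Q)} + \norm{\partial_1\partial_2 f}^2_{L^2(Q)}\right) \leq 4\,\norm{f}^2_{H^2(Q)}.
\]
Summing this unit-square inequality over $\bj \in \mathcal{Q}$ gives the claim. There is no serious obstacle; the only care needed is the bookkeeping in the iterated FTC identity and ensuring that the $L^1 \to L^2$ upgrade and the final $(a_1+a_2+a_3+a_4)^2 \leq 4\sum a_i^2$ step both produce exactly the constant $4$ advertised in the statement.
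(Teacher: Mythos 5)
Your proof is correct, and it takes a genuinely different route from the paper's. The paper proceeds coordinate-by-coordinate: it compares $f(\x^{(\bj)})^2$ to its minimum over the horizontal segment $[j_1,j_1+1]\times\{x_2^{(\bj)}\}$ via $\frac{d}{dx_1}f^2 = 2f\,\partial_1 f$ and Cauchy--Schwarz, bounds that minimum by the average over the segment, packages the result as a one-variable function $F(x_2)^2$, and then repeats the same minimizer-plus-FTC argument in the $x_2$ direction; the constant $4$ emerges from tracking the coefficients $4f^2 + 4(\partial_1 f)^2 + 2(\partial_2 f)^2 + 2(\partial_1\partial_2 f)^2$ through the two stages. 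You instead write a single two-dimensional iterated-FTC identity anchored at an arbitrary base point $\y$, average over $\y$ in the unit cell, upgrade each $L^1$ bound to $L^2$ by Cauchy--Schwarz, and obtain the constant from $(a_1+a_2+a_3+a_4)^2 \leq 4\sum_i a_i^2$. Both arguments use exactly the same four derivatives ($f$, $\partial_1 f$, $\partial_2 f$, $\partial_1\partial_2 f$) and land on the same constant; yours is more symmetric and makes the role of the mixed derivative explicit in one identity, while the paper's avoids writing the full expansion at the cost of an intermediate auxiliary function $F$ and a minimizer argument repeated twice. Your reduction to the unit square by translation, the density argument for passing from smooth $f$ to $f \in H^2$, and the appeal to $H^2(Q)\hookrightarrow C(\overline{Q})$ to justify pointwise evaluation are all sound.
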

\begin{proof}
 Note that $f$ is continues by Sobolev embedding. Let $\underline{\x}^{(\bj)}$ denote the minimizer of $f$ over $[j_1,j_1+1]\times\{x^{(\bj)}_2\}$. We can write 
 \be 
 f(\x^{(\bj)})^2=f(\underline{\x}^{(\bj)})^2+\int_{\underline{x}^{(\bj)}_1}^{x^{(\bj)}_1}\frac{d}{dx_1}f(x_1,x^{(\bj)}_2)^2dx_1.
 \ee 
 We can apply Cauchy-Schwarz to get
 \be 
  f(\x^{(\bj)})^2 \leq f(\underline{\x}^{(\bj)})^2 +2 \left|\int_{\underline{x}^{(\bj)}_1}^{x^{(\bj)}_1}f(x_1,x^{(\bj)}_2)^2dx_1\right|^{\frac{1}{2}}\left|\int_{\underline{x}^{(\bj)}_1}^{x^{(\bj)}_1}\left(\frac{d}{dx_1}f(x_1,x^{(\bj)}_2)\right)^2dx_1 \right|^{\frac{1}{2}},
 \ee 
 which becomes 
 \be 
 f(\x^{(\bj)})^2 \leq f(\underline{\x}^{(\bj)})^2 + \left|\int_{\underline{x}^{(\bj)}_1}^{x^{(\bj)}_1}f(x_1,x^{(\bj)}_2)^2dx_1\right|+\left|\int_{\underline{x}^{(\bj)}_1}^{x^{(\bj)}_1}\left(\frac{d}{dx_1}f(x_1,x^{(\bj)}_2)\right)^2dx_1\right| .
 \ee
 Since the integrands are non-negative
 \be 
 f(\x^{(\bj)})^2 \leq f(\underline{\x}^{(\bj)})^2 + \int_{j_1}^{j_1+1}f(x_1,x^{(\bj)}_2)^2dx_1+\int_{j_1}^{j_1+1}\left(\frac{d}{dx_1}f(x_1,x^{(\bj)}_2)\right)^2dx_1.
 \ee
 The first term on the right hand side is smaller than the first integral. 
 \be \label{Final Inequality for U}
 f(\x^{(\bj)})^2 \leq  2\int_{j_1}^{j_1+1}f(x_1,x^{(\bj)}_2)^2dx_1+\int_{j_1}^{j_1+1}\left(\frac{d}{dx_1}f(x_1,x^{(\bj)}_2)\right)^2dx_1.
 \ee
 Let 
 \be \label{Def of F}
 F(x_2)^2\coloneqq 2\int_{j_1}^{j_1+1}f(x_1,x_2)^2dx_1+\int_{j_1}^{j_1+1}\left(\frac{d}{dx_1}f(x_1,x_2)\right)^2dx_1.
 \ee
 Since we are done with the old $\underline{x}^{(\bj)}_2$, we use it to denote the minimizer of $F$ over $[j_2,j_2+1].$ Now similar to before we get that 
 \be \label{Initial equality for F}
 F(x_2^{(\bj)})^2 = F(\underline{x}^{(\bj)}_2)^2 + \int_{\underline{x}^{(\bj)}_2}^{x_2^{(\bj)}}\frac{d}{dx_2}F(x_2)^2dx_2.
 \ee
 Now
 \be 
 \frac{d}{dx_2}F(x_2)^2=\int_{j_1}^{j_1+1}4f(x_1,x_2)\frac{\partial}{\partial x_2}f(x_1,x_2)+2\frac{\partial}{\partial x_1}f(x_1,x_2)\frac{\partial^2}{\partial x_2x_1}f(x_1,x_2)dx_1.
 \ee
 One application of Cauchy-Schwarz to the integrand gives
 \be 
  \left|\frac{d}{dx_2}F(x_2)^2\right| \leq 4\int_{j_1}^{j_1+1}\left(f^2+\left(\frac{\partial }{\partial x_1}f\right)^2\right)^{\frac{1}{2}}\left(\left(\frac{\partial }{\partial x_2}f\right)^2+\left(\frac{\partial^2 }{\partial x_2x_1}f\right)^2\right)^{\frac{1}{2}}dx_1,
 \ee
which becomes
 \be 
  \left|\frac{d}{dx_2}F(x_2)^2\right| \leq 2\int_{j_1}^{j_1+1}f^2+\left(\frac{\partial }{\partial x_1}f\right)^2+\left(\frac{\partial }{\partial x_2}f\right)^2+\left(\frac{\partial^2 }{\partial x_2x_1}f\right)^2dx_1.
 \ee
 Therefore 
 \be 
 \label{Final Inequality for F der}
 \int_{\underline{x}^{(\bj)}_2}^{x_2^{(\bj)}}\left|\frac{d}{dx_2}F^2\right|dx_2 \leq  2\int_{j_2}^{j_2+1}\int_{j_1}^{j_1+1}f^2+\left(\frac{\partial }{\partial x_1}f\right)^2+\left(\frac{\partial }{\partial x_2}f\right)^2+\left(\frac{\partial^2 }{\partial x_2x_1}f\right)^2dx_1dx_2.
 \ee
 Also
 \be \label{Final inequality for F}
 F(\underline{x}_2^{(\bj)})^2 \leq \int_{j_2}^{j_2+1}F(x_2)^2dx_2.
 \ee
 By the \eqref{Def of F}
 \be \label{Final inequality for F min} 
 F(\underline{x}_2^{(\bj)})^2  \leq 2\int_{j_2}^{j_2+1} \int_{j_1}^{j_1+1}f(x_1,x_2)^2+\left(\frac{\partial}{\partial x_1}f(x_1,x_2)\right)^2dx_1dx_2.
 \ee
 Using \eqref{Final Inequality for F der} and \eqref{Final inequality for F min} in \eqref{Initial equality for F}, we have 
 \be 
 F(x_2^{(\bj)})^2 \leq \int_{j_2}^{j_2+1}\int_{j_1}^{j_1+1}4f^2+4\left(\frac{\partial }{\partial x_1}f\right)^2+2\left(\frac{\partial }{\partial x_2}f\right)^2+2\left(\frac{\partial^2 }{\partial x_2x_1}f\right)^2dx_1dx_2.
\ee
From \eqref{Final Inequality for U}, we have 
\be 
f(\x^{(\bj)})^2 \leq  \int_{j_2}^{j_2+1}\int_{j_1}^{j_1+1}4f^2+4\left(\frac{\partial }{\partial x_1}f\right)^2+2\left(\frac{\partial }{\partial x_2}f\right)^2+2\left(\frac{\partial^2 }{\partial x_2x_1}f\right)^2dx_1dx_2.
\ee
Summing over $\bj \in \mathcal{Q}$ we get
\be 
\sum_{\bj \in \mathcal{Q}}f(\x^{(\bj)})^2 \leq 4\sum_{\bj \in \mathcal{Q}} \norm{f}^2_{H^2(\prod_{i=1}^2 [\bj,\bj+\e_i])}.
\ee
\end{proof}
\begin{corollary}\label{Obvious Corollary}
 In the same context as Lemma \ref{Riemann Sum} with $\mathcal{Q}=\Z^2,$ we have 
 \be 
 \sum_{\bj \in \Z^2} f(\ep\x^{(\bj)})^2 \leq  \ep^{-2}4\norm{f}^2_{H^2}.
 \ee 
\end{corollary}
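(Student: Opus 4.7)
My plan is to reduce Corollary \ref{Obvious Corollary} to Lemma \ref{Riemann Sum} by a straightforward rescaling argument, exploiting the fact that the sample points $\ep\x^{(\bj)}$ sit inside the $\ep$-scaled unit cubes $\ep \cdot \prod_{i=1}^{2}[j_i, j_i+1]$. Concretely, I would define the rescaled function $g:\R^2 \to \R$ by $g(\y) \coloneqq f(\ep \y)$, which is still in $H^2(\R^2)$ since $f$ is. Applying Lemma \ref{Riemann Sum} to $g$ with the same sample points $\x^{(\bj)}$ and $\mathcal{Q} = \Z^2$, and using that the unit cubes tile $\R^2$ up to a set of measure zero, gives
\be
\sum_{\bj \in \Z^2} f(\ep \x^{(\bj)})^2 = \sum_{\bj \in \Z^2} g(\x^{(\bj)})^2 \leq 4 \sum_{\bj \in \Z^2}\norm{g}^2_{H^2(\prod_i [j_i, j_i+1])} = 4\norm{g}^2_{H^2(\R^2)}.
\ee

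The remaining step is to compare $\norm{g}^2_{H^2}$ with $\norm{f}^2_{H^2}$ via the change of variables $\x = \ep \y$. Since $\partial^\alpha g(\y) = \ep^{|\alpha|} (\partial^\alpha f)(\ep \y)$ and $d\y = \ep^{-2} d\x$, I get $\norm{\partial^\alpha g}^2_{L^2} = \ep^{2|\alpha| - 2} \norm{\partial^\alpha f}^2_{L^2}$. Summing over multi-indices with $|\alpha| \leq 2$ yields
\be
\norm{g}^2_{H^2(\R^2)} = \ep^{-2} \norm{f}^2_{L^2} + \norm{\nabla f}^2_{L^2} + \ep^2 \norm{D^2 f}^2_{L^2} \leq \ep^{-2} \norm{f}^2_{H^2},
\ee
where the final inequality uses $\ep \leq 1$, which is fine since $\ep$ is a small positive parameter throughout the paper. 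Combined with the previous display this produces the stated bound $\sum_{\bj} f(\ep\x^{(\bj)})^2 \leq 4\ep^{-2}\norm{f}^2_{H^2}$.

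Because the corollary is essentially a bookkeeping consequence of Lemma \ref{Riemann Sum} together with the standard scaling of Sobolev norms, I do not anticipate any real obstacle. The only point requiring a moment of care is identifying which power of $\ep$ dominates after the change of variables: each derivative contributes a factor $\ep$ while the Jacobian contributes $\ep^{-2}$, so it is the $L^2$ term (with no derivatives) that produces the $\ep^{-2}$ on the right hand side, and the higher-derivative contributions come for free.
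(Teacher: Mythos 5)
Your argument is correct and is essentially the proof the paper intends: the corollary is stated without proof, but the analogous Lemma \ref{Tail Sum} is proved by exactly this rescaling (apply Lemma \ref{Riemann Sum} to $f(\ep\cdot)$, sum the cube norms into a global norm, and change variables to pull out $\ep^{-2}$). Your extra care in noting that the derivative terms contribute factors $\ep^{2|\alpha|}\leq 1$ is a correct and welcome detail that the paper glosses over.
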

\begin{lemma}\label{Tail Sum}
 In the same context as Lemma \ref{Riemann Sum} with $\mathcal{Q}=D(\0,\ep^{-1}R+1)^c,$ we have 
 \be 
 \sum_{\bj \in \mathcal{Q}} f(\ep\x^{(\bj)})^2 \leq  \ep^{-2}4\norm{f}^2_{H^2(B(R)^c)}.
 \ee 
\end{lemma}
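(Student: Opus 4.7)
The plan is to reduce Lemma \ref{Tail Sum} to Lemma \ref{Riemann Sum} by an $\ep$-rescaling, following essentially the same template as Corollary \ref{Obvious Corollary} but keeping careful track of the spatial support. First I would apply Lemma \ref{Riemann Sum} to the rescaled function $g(\x) \coloneqq f(\ep \x)$, with sample points $\x^{(\bj)}$ and the index set $\mathcal{Q} = D(\0,\ep^{-1}R+1)^c$. Since $g(\x^{(\bj)}) = f(\ep\x^{(\bj)})$, this directly yields
\[
\sum_{\bj\in\mathcal{Q}} f(\ep\x^{(\bj)})^2 \;\leq\; 4\sum_{\bj\in\mathcal{Q}} \|g\|_{H^2(\prod_i[j_i,j_i+1])}^2.
\]

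Next I would change variables $\y = \ep\x$ on each unit cube $\prod_i[j_i,j_i+1]$, which maps to the scaled cube $\prod_i[\ep j_i,\ep(j_i+1)]$. The Jacobian contributes $\ep^{-2}$ to the $L^2$ part, while each derivative $\partial_{x_i}g = \ep(\partial_{y_i}f)(\ep\,\cdot\,)$ produces an extra factor of $\ep^2$ per pair of derivatives when squared; so for $\ep \leq 1$ the worst term is the $L^2$ term and we get the clean bound $\|g\|_{H^2(\prod_i[j_i,j_i+1])}^2 \leq \ep^{-2}\|f\|_{H^2(\prod_i[\ep j_i,\ep(j_i+1)])}^2$.

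The only genuinely new piece compared to the corollary, and the step I would regard as the main thing to check carefully, is the geometric claim that the union of the scaled cubes $\prod_i[\ep j_i,\ep(j_i+1)]$ over $\bj\in\mathcal{Q}$ is contained in $B(R)^c$. For integer $\bj$ with $|\bj|_\infty > \ep^{-1}R+1$, some coordinate $i$ satisfies $|j_i| \geq \ep^{-1}R + 1$, whence either $\ep j_i \geq R + \ep$ (if $j_i > 0$) or $\ep(j_i+1) \leq -R$ (if $j_i < 0$); in either case every point of the scaled cube has that coordinate of absolute value at least $R$, placing the cube in $B(R)^c$.

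Finally, since the scaled cubes are pairwise disjoint (up to measure-zero boundaries), summing the right-hand side over $\bj \in \mathcal{Q}$ collapses the union of $H^2$-densities into a single integral over a subset of $B(R)^c$, yielding
\[
\sum_{\bj\in\mathcal{Q}} f(\ep\x^{(\bj)})^2 \;\leq\; 4\ep^{-2}\|f\|_{H^2(B(R)^c)}^2,
\]
as claimed. No new analytic estimates are needed beyond Lemma \ref{Riemann Sum}; the entire content is bookkeeping for the $\ep$-rescaling and the infinity-norm vs.\ Euclidean-ball geometry.
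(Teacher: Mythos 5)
Your proposal is correct and follows essentially the same route as the paper: apply Lemma \ref{Riemann Sum} to $f(\ep\cdot)$ over $\mathcal{Q}=D(\0,\ep^{-1}R+1)^c$, observe that the relevant cubes lie in the complement of the appropriate ball, and rescale to extract the factor $\ep^{-2}$. The only (immaterial) difference is that you rescale cube by cube and then take the union in the $\y$-variables, whereas the paper first takes the union to get $B(\ep^{-1}R)^c$ and rescales once via polar coordinates; your version is in fact slightly more careful about the derivative terms and about verifying the cube containment, both of which the paper asserts without detail.
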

\begin{proof}
From Lemma \ref{Riemann Sum}
\be 
\sum_{\bj \in D(\0,\ep^{-1}R+1)^c}f(\ep\x^{(\bj)})^2 \leq 4\sum_{\bj \in D(\0,\ep^{-1}R+1)^c } \norm{f(\ep \cdot)}^2_{H^2(\prod_{i=1}^2[\bj,\bj+\e_i])}.
\ee
Note that \be \bigcup_{\bj \in D(\0,\ep^{-1}R+1)^c}\prod_{i=1}^2[\bj,\bj+\e_i]\subset B(\ep^{-1}R)^c .\ee Hence 
\be 
\sum_{\bj \in D(\0,\ep^{-1}R+1)^c } \norm{f(\ep \cdot)}^2_{H^2(\prod_{i=1}^2[\bj,\bj+\e_i])} \leq \norm{f(\ep\cdot)}^2_{H^2( B(\ep^{-1}R)^c)}.
\ee
Written in polar coordinates 
\be
\begin{aligned}
\norm{f(\ep \cdot)}^2_{H^2( B(\ep^{-1}R)^c)} =&\int_0^{2\pi}\int_{\ep^{-1}R}^{\infty}f(\theta, \ep r)^2rdrd\theta\\=&\ep^{-2}\int_0^{2\pi}\int_{R}^\infty f(\theta,s)^2sdsd\theta\\=&\ep^{-2}\norm{f}^2_{H^2(B(R)^c)}.
\end{aligned}
\ee
\end{proof}
\begin{lemma}\label{FT}
Let $\delta^\pm_{\ep i}\phi(\X) \coloneqq \pm(\phi( \X)-\phi( \X-\ep\e_i)).$ Let $w$ be as in assumptions $\eqref{Assumption 1 on Weight}$ and \eqref{Assumptions 2 on Wieght}. Then 
\be \norm{\delta^\pm_{\ep i}(\phi)}_{H^k_w} \leq \ep C\norm{\phi}_{H^{k+1}_w}.\ee
\end{lemma}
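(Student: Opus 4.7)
The plan is to start from the fundamental theorem of calculus, which lets me write
\be
\delta^\pm_{\ep i}\phi(\X)=\pm\int_0^\ep \partial_{X_i}\phi(\X\mp s\e_i)\,ds,
\ee
so that after differentiating $k$ times in $\X$ and passing the derivative under the integral sign I obtain, for each multi-index of order $j\leq k$,
\be
D^j\delta^\pm_{\ep i}\phi(\X)=\pm\int_0^\ep D^j\partial_{X_i}\phi(\X\mp s\e_i)\,ds.
\ee
Multiplying by $w(\X)$ and applying Minkowski's integral inequality then gives
\be
\norm{wD^j\delta^\pm_{\ep i}\phi}_{L^2}\leq \int_0^\ep \norm{w(\cdot)\,D^j\partial_{X_i}\phi(\cdot\mp s\e_i)}_{L^2}\,ds.
\ee

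The central technical step is to compare the weight $w(\X)$ with its translate $w(\X\mp s\e_i)$ so that the integrand is controlled by $\|w\,D^{j+1}\phi\|_{L^2}$. After the change of variables $\Y=\X\mp s\e_i$ in the inner integral, the integrand equals $\|w(\Y\pm s\e_i)D^j\partial_{Y_i}\phi(\Y)\|_{L^2}$. Assumption \eqref{Assumptions 2 on Wieght} gives $|Dw|\leq B$ and \eqref{Assumption 1 on Weight} gives $w\geq 1$, so for $s\in[0,\ep]$ with $\ep$ bounded above by some fixed constant (which is the only regime of interest),
\be
w(\Y\pm s\e_i)\leq w(\Y)+Bs\leq (1+B\ep)\,w(\Y)\leq C\,w(\Y).
\ee
This yields $\|w(\Y\pm s\e_i)D^j\partial_{Y_i}\phi(\Y)\|_{L^2}\leq C\|w\,D^{j+1}\phi\|_{L^2}$, independent of $s$.

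Putting these together, the $s$-integral produces the promised factor of $\ep$:
\be
\norm{wD^j\delta^\pm_{\ep i}\phi}_{L^2}\leq \ep C \norm{w\,D^{j+1}\phi}_{L^2}.
\ee
Summing over $j=0,1,\dots,k$ and comparing with the definition \eqref{Def of Weighted Norm w} gives
\be
\norm{\delta^\pm_{\ep i}\phi}_{H^k_w}=\sum_{j=0}^k\norm{wD^j\delta^\pm_{\ep i}\phi}_{L^2}\leq \ep C\sum_{j=0}^k\norm{wD^{j+1}\phi}_{L^2}\leq \ep C\norm{\phi}_{H^{k+1}_w},
\ee
as desired. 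The only genuine subtlety is the weight comparison, and it is handled cleanly by the uniform bound on $Dw$ together with $w\geq 1$; everything else is a bookkeeping application of Minkowski's inequality and commuting $D^j$ through a constant-coefficient translation. No probabilistic machinery is needed, since this is a deterministic statement about the continuous effective wave.
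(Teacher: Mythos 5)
Your proof is correct and follows essentially the same route as the paper: represent the finite difference via the fundamental theorem of calculus, extract the factor of $\ep$ from the length of the integration interval, and absorb the translated weight using $|Dw|\leq B$ together with $w\geq 1$. The only cosmetic differences are that you use Minkowski's integral inequality where the paper uses Jensen's inequality plus Fubini, and that you treat both signs in one stroke rather than reducing the backward difference to the forward one via a bounded shift operator.
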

\begin{proof}
We do the proof for $i=1$. Let $\partial$ be some generic partial derivative of order $k$. By the Fundamental Theorem of Calculus
\be 
\partial \delta^+_{\ep i}\phi(\X)= \delta^+_{\ep i}\partial\phi(\X)=\int_{ X_1}^{X_1+\ep}\partial_{X'_1}\partial\phi(X_1',X_2)dX_1'.
\ee
 Now we calculate 
\be \begin{aligned}
&\int_{-\infty}^{-\infty}\int_{-\infty}^{-\infty}w(X_1,X_2)^2\left(\partial\delta_{\ep i}\phi(X_1,X_2)^2\right)^2dX_1dX_2 \\ &=\int_{-\infty}^{-\infty}\int_{-\infty}^{-\infty}w(X_1,X_2)^2\left(\int_{ X_1}^{ X_1+\ep}\partial_{X'_1}\partial\phi(X_1',X_2)dX_1'\right)^2dX_1dX_2
\end{aligned}
\ee
An application of Jensen's Inequality yields 
\be \begin{aligned}
&\int_{-\infty}^{-\infty}\int_{-\infty}^{-\infty}\left(\int_{X_1}^{ X_1+\ep}\partial_{X'_1}\partial\phi(X_1',X_2)dX_1'\right)^2dX_1dX_2 \\&= \ep \int_{-\infty}^{-\infty}\int_{-\infty}^{-\infty}w(X_1,X_2)^2\int_{ X_1}^{ X_1+\ep}\left(\partial_{X'_1}\partial\phi(X_1',X_2)\right)^2dX_1'dX_1dX_2.
\end{aligned}
\ee
Now we flip the $X_1$ and $X_1'$
\be
\begin{aligned}
&\int_{-\infty}^{-\infty}\int_{-\infty}^{-\infty}w(X_1,X_2)^2\int_{ X_1}^{ X_1+\ep}\left(\partial_{X'_1}\partial\phi(X_1',X_2)\right)^2dX_1'dX_1dX_2\\&=\int_{-\infty}^{-\infty}\int_{-\infty}^{-\infty}\left(\partial_{X'_1}\partial\phi(X_1',X_2)\right)^2\int_{X_1'-\ep}^{X_1'}w(X_1,X_2)^2dX_1dX_1'dX_2
\end{aligned}
\ee
From our definition of $w$ in \eqref{Assumption 1 on Weight}
\be 
\begin{aligned}
&\int_{-\infty}^{-\infty}\int_{-\infty}^{-\infty}\left(\partial_{X'_1}\partial\phi(X_1',X_2)\right)^2\int_{X_1'-\ep}^{X_1'}w(X_1,X_2)^2dX_1dX_1'dX_2\\ &\leq \ep\int_{-\infty}^{-\infty}\int_{-\infty}^{-\infty}\left(\partial_{X'_1}\partial\phi(X_1',X_2)\right)^2w(X_1',X_2)^2dX_1'dX_2.
\end{aligned}
\ee
\end{proof}
Stringing these inequalities together, we find 
\be 
\begin{aligned}
&\int_{-\infty}^{-\infty}\int_{-\infty}^{-\infty}w(X_1,X_2)^2\left(\partial\delta_{\ep i}\phi(X_1,X_2)^2\right)^2dX_1dX_2 \\ &\leq \ep^2\int_{-\infty}^{-\infty}\int_{-\infty}^{-\infty}\left(\partial_{X'_1}\partial\phi(X_1',X_2)\right)^2w(X_1',X_2)^2dX_1'dX_2,
\end{aligned}
\ee
which implies 
\be 
\norm{\delta^+_{\ep i}\phi}_{H^k_w} \leq \ep \norm{\phi}_{H^{k+1}_w}.
\ee
Now let $S^-_{\ep i}\phi(\X) \coloneqq \phi(\X-\ep \e_i)$. Thus $\delta_{\ep i}^-\phi=S_{\ep i}^-\delta^+_{\ep i}\phi$. We now show that $S_{\ep i}^{-}$ is a bounded operator. Consider any $f \in L^2$, then
\be \int_{\R^2}w(\X)^2f(\X-\ep \e_i)^2d\X=\int_{\R^2}w(\X+\ep \e_i)^2f(\X)^2d\X. \ee

From the definition of the weight in \eqref{Assumption 1 on Weight}, there exists a constant $C$ s.t. 
\be 
w(\X+\ep \e_i)^2 \leq C^2w(\X)^2.
\ee
Therefore the operator $S_{\ep i}^-$ is bounded by this constant $C.$ Thus 
\be 
\norm{\delta^-_{\ep i}\phi}_{H^k_w} \leq \ep C \norm{\phi}_{H^{k+1}_w}.
\ee
\begin{corollary}\label{FTC}
 Let $\Delta_{\ep i} \phi(\X)\coloneqq \phi(\X+\ep \e_i)-2\phi(\X)+\phi(\X-\ep \e_i)$ and $w$ as in the previous lemma. Then 
 \be
 \norm{\Delta_{ \ep} \phi i}_{H_w^k}\leq \ep^2 C\norm{\phi}_{H^{k+2}_w}.
 \ee
\end{corollary}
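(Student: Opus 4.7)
The plan is to reduce the second-order difference $\Delta_\ep \phi$ to two iterated applications of the first-order differences already controlled by Lemma \ref{FT}, plus a harmless shift. A direct computation gives
\be
\delta^+_{\ep i}(\delta^-_{\ep i}\phi)(\X)
= \delta^-_{\ep i}\phi(\X)-\delta^-_{\ep i}\phi(\X-\ep\e_i)
= -\phi(\X)+2\phi(\X-\ep\e_i)-\phi(\X-2\ep\e_i)
= -\Delta_\ep\phi(\X-\ep\e_i),
\ee
so that $\Delta_\ep\phi(\X)=-\delta^+_{\ep i}(\delta^-_{\ep i}\phi)(\X+\ep\e_i)$. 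In other words, $\Delta_\ep\phi$ is, up to a shift by $\ep\e_i$, the composition of the two first-order difference operators treated in Lemma \ref{FT}.

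From here I would simply chain the estimates. First apply Lemma \ref{FT} to $\phi$ to get
\be
\norm{\delta^-_{\ep i}\phi}_{H^{k+1}_w}\leq \ep C\norm{\phi}_{H^{k+2}_w},
\ee
and then apply Lemma \ref{FT} again to the function $\delta^-_{\ep i}\phi$ to obtain
\be
\norm{\delta^+_{\ep i}(\delta^-_{\ep i}\phi)}_{H^k_w}\leq \ep C\norm{\delta^-_{\ep i}\phi}_{H^{k+1}_w}\leq \ep^2 C^2\norm{\phi}_{H^{k+2}_w}.
\ee
The shift by $\ep\e_i$ that relates $\Delta_\ep\phi(\X)$ to $\delta^+_{\ep i}(\delta^-_{\ep i}\phi)(\X+\ep\e_i)$ is a bounded operator on $H^k_w$ by exactly the change-of-variables argument appearing at the end of the proof of Lemma \ref{FT}, based on $w(\X+\ep\e_i)^2\leq C^2 w(\X)^2$ from \eqref{Assumption 1 on Weight}. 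Absorbing all constants into a single $C$ yields
\be
\norm{\Delta_\ep\phi}_{H^k_w}\leq \ep^2 C\norm{\phi}_{H^{k+2}_w},
\ee
which is the claim.

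There is essentially no main obstacle here: the only thing to be careful about is that the intermediate object $\delta^-_{\ep i}\phi$ lives in $H^{k+1}_w$, so we need one extra order of regularity on $\phi$ at each application of Lemma \ref{FT}, which is precisely why the right-hand side uses $H^{k+2}_w$. The shift at the end is cosmetic and costs only a universal constant.
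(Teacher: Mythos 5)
Your proof is correct and takes the intended route: the paper states this as a corollary of Lemma \ref{FT} with no written proof, and composing the two first-order difference estimates (plus absorbing the residual shift by $\ep\e_i$ via the same weight-translation bound used at the end of the proof of Lemma \ref{FT}) is exactly the argument that justifies it. Your algebra identifying $\Delta_\ep\phi(\X)$ with $-\delta^+_{\ep i}(\delta^-_{\ep i}\phi)(\X+\ep\e_i)$ under the paper's sign conventions checks out, and the bookkeeping of one extra order of regularity per application is handled correctly.
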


\subsection{Coarse Graining A}
\begin{lemma}\label{Interpolation}
Let $U: \R^2 \to \R^2 $ be in $H^s$ with $s>1$. Put $U_\ep(\x) \coloneqq U(\ep \x).$ Then 
\be 
\lim_{\ep \to 0^+} \norm{\mathcal{LS}[U_\ep](\cdot/\ep)-U}_{L^2}=0
\ee
where $\mathcal{L}$ and $\mathcal{S}$ are defined by \eqref{low pass} and \eqref{Sampling}. 
\end{lemma}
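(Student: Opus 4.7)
The plan is to reduce the problem to a familiar band-limited reconstruction/aliasing analysis via Poisson summation, and then to exploit $s>1$ (i.e.\ more regularity than the critical exponent in dimension $2$) to control the aliasing.

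First I would rewrite $F[\mathcal{S}[U_\ep]]$ in terms of $\mathcal{F}[U]$. Since $\mathcal{F}[U_\ep](\bxi) = \ep^{-2}\mathcal{F}[U](\bxi/\ep)$, the Poisson summation formula (valid because $U\in H^s$ with $s>1>d/2$ makes $U$ continuous with enough decay of $\mathcal{F}[U]$) applied to $U_\ep$ gives
\begin{equation*}
F[\mathcal{S}[U_\ep]](\y) \;=\; \sum_{\bk \in \Z^2}\mathcal{F}[U_\ep](\y+2\pi\bk) \;=\; \ep^{-2}\sum_{\bk\in\Z^2}\mathcal{F}[U]\!\left(\tfrac{\y+2\pi\bk}{\ep}\right),\qquad \y\in[-\pi,\pi]^2.
\end{equation*}
Substituting this into the definition of $\mathcal{L}$ and then changing variables $\bxi=\y/\ep$ I would obtain
\begin{equation*}
\mathcal{LS}[U_\ep](\x/\ep) \;=\; \underbrace{\int_{[-\pi/\ep,\pi/\ep]^2}e^{i\bxi\cdot\x}\,\mathcal{F}[U](\bxi)\,d\bxi}_{=:\,A_\ep(\x)} \;+\; \underbrace{\int_{[-\pi/\ep,\pi/\ep]^2}e^{i\bxi\cdot\x}\sum_{\bk\neq \0}\mathcal{F}[U]\!\left(\bxi+\tfrac{2\pi\bk}{\ep}\right)d\bxi}_{=:\,B_\ep(\x)}.
\end{equation*}

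Next I would show $\|A_\ep - U\|_{L^2}\to 0$. This is the inverse Fourier transform of $\mathbbm{1}_{[-\pi/\ep,\pi/\ep]^2}\mathcal{F}[U]$, so by Plancherel
\begin{equation*}
\|A_\ep-U\|_{L^2}^2 \;=\; C\int_{\{|\bxi|_\infty>\pi/\ep\}}|\mathcal{F}[U](\bxi)|^2\,d\bxi \;\longrightarrow\; 0
\end{equation*}
by dominated convergence, since $\mathcal{F}[U]\in L^2$.

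The main work is controlling the aliasing term $B_\ep$. Applying Cauchy--Schwarz inside the $\bk$-sum with the weight $(1+|\,\cdot\,|^2)^{-s/2}$ gives, for $\bxi\in[-\pi/\ep,\pi/\ep]^2$,
\begin{equation*}
\left|\sum_{\bk\neq \0}\mathcal{F}[U]\!\left(\bxi+\tfrac{2\pi\bk}{\ep}\right)\right|^2 \;\leq\; \Biggl(\sum_{\bk\neq\0}(1+|\bxi+2\pi\bk/\ep|^2)^{-s}\Biggr)\Biggl(\sum_{\bk\neq\0}(1+|\bxi+2\pi\bk/\ep|^2)^{s}|\mathcal{F}[U](\bxi+2\pi\bk/\ep)|^2\Biggr).
\end{equation*}
For such $\bxi$ and $\bk\neq\0$ one has $|\bxi+2\pi\bk/\ep|\geq \pi|\bk|/\ep$, so the first factor is bounded by $C\ep^{2s}\sum_{|\bk|\geq 1}|\bk|^{-2s}$, which is finite precisely because $s>1$. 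Integrating over $\bxi\in[-\pi/\ep,\pi/\ep]^2$ and observing that the translates $[-\pi/\ep,\pi/\ep]^2+2\pi\bk/\ep$ tile $\R^2$, another Plancherel yields
\begin{equation*}
\|B_\ep\|_{L^2}^2 \;\leq\; C\,\ep^{2s}\int_{\R^2}(1+|\bxi|^2)^s|\mathcal{F}[U](\bxi)|^2\,d\bxi \;=\; C\,\ep^{2s}\|U\|_{H^s}^2,
\end{equation*}
which tends to $0$ (with rate $\ep^s$). Combining the two estimates via the triangle inequality gives the claim.

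The main obstacle is the aliasing estimate: identifying the correct Sobolev exponent at which $\sum_{\bk\neq\0}|\bk|^{-2s}$ converges ($s>1=d/2$) and checking that the translates tile $\R^2$ so that the weighted sum collapses to the $H^s$ norm of $U$. The Poisson summation step itself is routine under the stated regularity but must be justified carefully; beyond this, the argument is standard Fourier analysis.
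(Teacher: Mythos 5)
Your proof is correct, and it follows the same Fourier-analytic skeleton as the paper's (reduce to a comparison in frequency space, finish with a Plancherel tail estimate of size $\ep^{s}\norm{U}_{H^s}$), but it differs at the one step where the two arguments are not interchangeable. The paper passes through the cardinal-series representation $\mathcal{LS}[U_\ep](\X/\ep)=\sum_{\bj}U(\ep\bj)\sinc(X_1/\ep-j_1)\sinc(X_2/\ep-j_2)$ and then asserts that this series \emph{equals} the band-limited truncation $\mathcal{F}^{-1}[\theta_{\pi/\ep}\mathcal{F}[U]]$, citing the theory of band-limited functions; taken literally this identity ignores aliasing, since the cardinal series built from samples of a non-band-limited $U$ reconstructs the periodization $\sum_{\bk}\mathcal{F}[U](\cdot+2\pi\bk/\ep)$ restricted to the band, not the truncation $\theta_{\pi/\ep}\mathcal{F}[U]$. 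Your decomposition into $A_\ep+B_\ep$ makes this aliasing term explicit and bounds it by $C\ep^{s}\norm{U}_{H^s}$ via the weighted Cauchy--Schwarz in $\bk$, which is exactly where the hypothesis $s>1=d/2$ is genuinely needed (both for $\sum_{\bk\neq\0}|\bk|^{-2s}<\infty$ and, implicitly, for the square-summability of the samples that legitimizes the Poisson summation identity in $L^2([-\pi,\pi]^2)$). So your route buys a self-contained and fully rigorous treatment of the sampling step at the cost of a slightly longer computation, whereas the paper's route is shorter but outsources (and slightly misstates) the aliasing control to the cited literature; the final rates agree.
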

\begin{proof}
From their definitions we have 
\be 
\mathcal{LS}[U_{\ep}](\x)=\frac{1}{4\pi}\int_{[-\pi,\pi]^2}\left(\sum_{\bj \in \Z}e^{-i \y\cdot \bj }U(\ep \bj)\right) e^{i\y \cdot \x}d\y.
\ee
Changing variables from $\y \to \ep \y $ gives 
\be 
\mathcal{LS}[U_{\ep}](\x)=\frac{\ep^2}{4\pi}\int_{[-\pi/\ep,\pi/\ep]^2}\left(\sum_{\bj \in \Z}e^{-i \ep \y\cdot \bj }U(\ep \bj)\right) e^{i \ep \y \cdot \x}d\y.
\ee
Now swap the integral and the sum and integrate to get
\be 
\mathcal{LS}[U_{\ep}](\x)=\sum_{\bj \in \Z^2}U(\ep \bj)\sinc(x_1- j_1)\sinc(x_2-j_2),
\ee
 where $\sinc$ is the normalized $\sinc$ function, $\frac{\sin(\pi x)}{\pi x}.$ Subbing $\x \to \X/\ep$ yields 
\be 
\mathcal{LS}[U_{\ep}](\X/\ep)=\sum_{\bj \in \Z^2}U(\ep \bj)\sinc(X_1/\ep- j_1)\sinc(X_2/\ep- j_2).
\ee
This series is exactly equal to the band-limited approximation of $U_\ep$ given by
\be \mathcal{LS}[U_{\ep}](\X/\ep)=\tilde{U}_\ep(\X) \coloneqq \mathcal{F}^{-1}[\theta_{\pi/\ep}\mathcal{F}[U]](\X). \ee
See \cite{Stenger} for details regarding band-limited functions. Now we only need to compare $U$ and $\tilde{U}_\ep$, but by use of Plancherel's Theorem, this is equivalent to looking at 
\be \begin{aligned} 
\norm{\theta_{\pi/\ep} \mathcal{F}[U]-\mathcal{F}[U]}^2_{L^2}&=\int_{|\y|_{\infty} > \pi/\ep } \hat{U}(\y)^2d\y \\
&\leq \sup_{|\y|_\infty \geq \pi/\ep}\left( 
\frac{1}{|\y|^{2s}}\right) \int_{|\y|_\infty
 >\pi /\ep 
}|\y|^{2s}\hat{U}(\y)^2 d\y \\ 
&\leq C\ep^{2s}\norm{U}^2_{H^s}.
\end{aligned}
\ee
Since we have taken $s> 1$, the result is proved and the rate of convergence is greater than $\ep.$

\end{proof}

\end{document}